\newcommand{\ao}{\widehat{\maxlevy}_T}
\DeclareMathAlphabet{\mathsf}{OT1}{\sfdefault}{m}{n}
\SetMathAlphabet{\mathsf}{bold}{OT1}{\sfdefault}{b}{n}
\DeclareMathAlphabet{\mathfrak}{U}{jkpmia}{m}{it}
\SetMathAlphabet{\mathfrak}{bold}{U}{jkpmia}{bx}{it}
\numberwithin{equation}{section}
\definecolor{grey_pers}{RGB}{69 90 100}
\definecolor{WIMgreen}{RGB}{60 134 132}
\definecolor{red_pers}{RGB}{204 37 41}
\definecolor{UMblue}{RGB}{4 47 86}
\definecolor{myteal}{RGB}{0 123 137}
\definecolor{dartmouthgreen}{rgb}{0.05, 0.5, 0.06}\definecolor{cobalt}{rgb}{0.0, 0.28, 0.67}\definecolor{coolblack}{rgb}{0.0, 0.18, 0.39}
\definecolor{glaucous}{rgb}{0.38, 0.51, 0.71}\definecolor{hooker\'sgreen}{rgb}{0.0, 0.44, 0.0}\definecolor{lemonchiffon}{rgb}{1.0, 0.98, 0.8}\definecolor{oucrimsonred}{rgb}{0.6, 0.0, 0.0}\definecolor{radicalred}{rgb}{1.0, 0.21, 0.37}\definecolor{raspberry}{rgb}{0.89, 0.04, 0.36}\definecolor{royalazure}{rgb}{0.0, 0.22, 0.66}
\definecolor{dex}{RGB}{138 18 34}
\theoremstyle{plain}
\newtheorem{theorem}{Theorem}[section]
\newtheorem{proposition}[theorem]{Proposition}
\newtheorem{lemma}[theorem]{Lemma}
\newtheorem{corollary}[theorem]{Corollary}
\theoremstyle{definition}
\theoremstyle{assumption}
\theoremstyle{remark}
\newtheorem{remark}[theorem]{Remark}
\def\coa{\bm{\vartheta}}
\def\cob{\bm{\varrho}}
\def\coc{\bm{\chi}}
\def\supp{\operatorname{supp}}
\def\TV{\operatorname{TV}}
\def\C{\mathfrak C}
\def\E{\mathbb{E}}
\def\G{\mathbb{G}}
\def\F{\mathbb{F}}
\def\N{\mathbb{N}}
\def\N{\mathbb{N}}
\def\R{\mathbb{R}}
\definecolor{darkred}{rgb}{0,0.6,0}
\def\X{\mathbf{X}}
\def\cX{\mathcal{X}}
\def\Var{\mathrm{Var}}
\newcommand{\cB}{\mathcal{B}}
\newcommand{\cF}{\mathcal{F}}
\newcommand{\ep}{\varepsilon}
\newcommand{\cO}{\mathcal{O}}
\newcommand{\PP}{\mathbb{P}}
\renewcommand{\subseteq}{\subset}
\renewcommand{\hat}{\widehat}
\newcommand{\e}{\mathrm{e}}
\renewcommand{\tilde}{\widetilde}%
\renewcommand{\d}{\mathop{}\!\mathrm{d} }
\newcommand{\lebesgue}{\boldsymbol{\lambda}}
\newcommand{\colow}{\xi}
\newcommand{\coup}{\theta}
\newcommand{\estcost}{\hat{C}_T(\colow,\coup)}
\newcommand{\estbound}{(\hat{\colow}_T,\hat{\coup}_T)}
\newcommand{\estboundcontrol}{(\tilde{\colow}_{s},\tilde{\coup}_{s})}
\newcommand{\maxlevy}{\coup}
\newcommand\indep{\protect\mathpalette{\protect\independenT}{\perp}}
\def\independenT#1#2{\mathrel{\rlap{$#1#2$}\mkern2mu{#1#2}}}
\newcommand{\overbar}[1]{\mkern 1.5mu\overline{\mkern-1.5mu#1\mkern-1.5mu}\mkern 1.5mu}
\newcommand{\underbars}[1]{\mkern 1.5mu\underline{\mkern-1.5mu#1\mkern-1.5mu}\mkern 1.5mu}
\def\Bd{B}
\def\Val{C^*}
\newcommand*\diff{\mathop{}\!\mathrm{d} }
\newcommand{\one}{\mathbf{1}}
\newcommand{\vertiii}[1]{{\left\vert\kern-0.25ex\left\vert\kern-0.25ex\left\vert #1
    \right\vert\kern-0.25ex\right\vert\kern-0.25ex\right\vert}}
\def\supp{\mathrm{supp}}
\let\originalleft\left
\let\originalright\right
\renewcommand{\left}{\mathopen{}\mathclose\bgroup\originalleft}
\renewcommand{\right}{\aftergroup\egroup\originalright}
\def\@fnsymbol#1{\ensuremath{\ifcase#1\or \dagger \or * \or \ddagger\or
   \mathsection\or \mathparagraph\or \|\or **\or \dagger\dagger
   \or \ddagger\ddagger \else\@ctrerr\fi}}
\title{\fontsize{16}{19} \selectfont Learning to reflect}
\author{S\"oren Christensen\thanks{Christian-Albrechts-Universit\"at Kiel, Mathematisches Seminar, Ludewig-Meyn-Str. 4, 24098 Kiel, Germany. \newline Email: \href{mailto:christensen@math.uni-kiel.de}{christensen@math.uni-kiel.de}} \and Claudia Strauch\thanks{Aarhus University, Department of Mathematics, Ny Munkegade 118, 8000 Aarhus C, Denmark. \newline Email: \href{mailto:strauch@math.au.dk}{strauch@math.au.dk}} \and Lukas Trottner\thanks{Universit\"at Mannheim, Institut f\"ur Mathematik, B6 26, 68159 Mannheim, Germany. \newline Email: \href{mailto:ltrottne@mail.uni-mannheim.de}{ltrottne@mail.uni-mannheim.de}}}
\date{\vspace{-5ex}}
\begin{document}

\maketitle
\begin{abstract}
Stochastic optimal control problems have a long tradition in applied probability, with the questions addressed being of high relevance in a multitude of fields. Even though theoretical solutions are well understood in many scenarios, their practicability suffers from the assumption of known dynamics of the underlying stochastic process, raising the statistical challenge of developing purely data-driven strategies. For the mathematically separated classes of continuous diffusion processes and L\'evy processes, we show that developing efficient  strategies for related singular stochastic control problems can essentially be reduced to finding rate-optimal estimators with respect to the $\sup$-norm risk of objects associated to the invariant distribution of ergodic processes which determine the theoretical solution of the control problem. 
From a statistical perspective, we exploit the exponential $\beta$-mixing property as the common factor of both scenarios to drive the convergence analysis, indicating that relying on general stability properties of Markov processes
is a sufficiently powerful and flexible approach to treat complex applications requiring  statistical methods. 
We show moreover that in the L\'evy case---even though per se jump processes are more difficult to handle both in statistics and control theory---a fully data-driven strategy with regret of significantly better order than in the diffusion case can be constructed. 

\end{abstract}

\noindent \textbf{Keywords:} nonparametric statistics; singular control; diffusion processes; Lévy processes; overshoots; exploration vs.\ exploitation; reinforcement learning; $\sup$-norm risk

\vspace{.2cm}
\noindent\textbf{MSC 2020:} {62M05, 62G05, 93E20, 93E35, 60G10, 60G51, 60J60}{}

\section{Introduction}\label{sec:intro}
From a purely mathematical point of view, the field of statistics of stochastic processes is very appealing as it lives from the combination of different techniques and findings from diverse mathematical areas, in particular statistics, probability theory or functional analysis. 
The fundamental motivation of this branch of statistics, however, results from concrete applications.
Thus, besides mathematical elegance and completeness, the developments and results in this area should always be tested in terms of their applicability. 

An important area in which stochastic processes (especially of diffusion-type) are used by default to account for random impacts is stochastic control theory. Whereas the theory itself is very well developed and offers concrete decision strategies for a variety of problems, these 
are usually based on the assumption that the decision maker has full knowledge of the dynamics of the underlying random process. In \cite{christensen20}, we have already presented an approach to overcome this constraint by means of nonparametric estimation methods and proposed a fully data-driven approach to solving a concrete impulse control problem.
In this paper, we are expanding the view and approaching the problem from a general perspective. Basic components for the data-based solution of a large class of stochastic control problems are
\begin{itemize}
\item[>>] the control of the $\sup$-norm risk for the estimation of certain (functionals of) characteristics of the random process, in particular
\item[>>] the derivation of upper bounds on the convergence rate.
\end{itemize}
In Section \ref{subsec:motivating_control}, we describe the nature of the control problems and how they naturally lead to associated nonparametric estimation problems. Based on this, in Section \ref{sec:2}, we briefly formulate our general statistical modelling framework. 

\subsection{The motivating control problems}\label{subsec:motivating_control}
The stochastic control problems we consider in this paper are---under the assumption that the decision maker has access to the underlying dynamics---classical, and variants are well-studied. They have in common that a decision maker controls a continuous-time process $\X$ on the real line, but the controls do not change continuously over time, but are of a singular type. More precisely, it turns out that the optimal strategies call for reflecting the underlying process at certain boundaries. These optimal boundaries can be found (semi-) explicitly as optimizers of certain (deterministic) auxiliary functions, based on the dynamics of the underlying uncontrolled process. In this paper, we consider the more realistic situation that the decision maker has to estimate the underlying dynamics while controlling the process. 
The main key for such a statistical treatment is that, for an underlying ergodic scalar diffusion $\X$, the corresponding auxiliary function can be described explicitly in terms of the invariant density, as detailed in\ Section \ref{sec:diff}. For estimating the optimizer in this case, the $\sup$-norm risk of invariant density estimators has to be studied. 
In Section \ref{sec:levy}, we then turn our attention to a control problem for underlying L\'evy processes. In this case, the auxiliary function is identified as a generator functional $\mathcal{A}_H \gamma$ of the ascending ladder height process $\mathbf{H}$ belonging to a L\'evy process $\X$. Again, a $\sup$-norm estimation procedure for such functionals has to be found.

A data-driven solution method for the control problems therefore naturally leads to the challenging statistical problem of setting set up a framework such that the seemingly different issues of $\sup$-norm estimation of the invariant density of an ergodic diffusion on the real axis and $\sup$-norm estimation of ladder height generator functionals $\mathcal{A}_H \gamma$ for a L\'evy process $\X$ can be integrated into.

\subsection{Nonparametric analysis: Controlling the $\mathbf{sup}$-norm risk of Markovian functionals}\label{sec:2}
The identification of an appropriate technical framework is a crucial issue for the statistical analysis of stochastic processes. 
Specific model choices such as scalar diffusion processes or multivariate reversible processes with continuous trajectories permit the application of particular technical tools (e.g., associated to diffusion local time or to the symmetry of the semigroup), but generally do not provide any information about the robustness of the used statistical methods beyond the chosen framework. 
In contrast, exponential $\beta$-mixing of general continuous-time Markov processes $\X = (X_t)_{t \geq 0}$ has been identified in \cite{dexheimer20} as a criterion which, on the one hand, is strong enough to serve as a central building block of a robust statistical analysis while, on the other hand, providing sufficient generality to allow to include an exhaustive list of Markov processes in the framework. 
Statistical properties of such processes can thus be studied based on fairly general results rooted in stability theory of Markov processes. We recall them at this point in order to apply them afterwards to estimation problems that are central for developing efficient data-driven stochastic control procedures. 

\paragraph{Basic notions and results}
Suppose that $\X = (X_t)_{t \geq 0}$ is a Borel right Markov process on some Borel state space $(\cX,\cB(\cX))$ with semigroup 
$$P_t(x,B) \coloneqq \PP^x(X_t \in B), \quad x \in\cX, B \in \cB(\cX),$$
and unique invariant distribution $\mu$, i.e., for any $B \in \cB(\cX)$ it holds that 
$$\PP^\mu(X_t \in B) = \int_{\cX} \PP^x(X_t \in B)\, \mu(\diff{x}) = \int_{\cX} P_t(x,B)\, \mu(\diff{x}) = \mu(B).$$
Denote, for $t \geq 0$, $\cF^0_t \coloneqq \sigma(X_s,0\leq s \leq t)$ and $\overbar{\cF}{}^0_t \coloneqq \sigma(X_s,s \geq t)$.  
We say that $\X$, started in some distribution $\eta$ on the state space, is exponentially $\beta$-mixing if there exist constants $\kappa,c_\kappa> 0$ such that 
$$\beta_\eta(t) \coloneqq \sup_{s \geq 0} \sup_{C \in \cF^0_s \otimes \overbar{\cF}{}^0_{t+s}} \big\lvert \PP^\eta\vert_{\cF^0_s \otimes \overbar{\cF}{}^0_{t+s}} (C) - \PP^\eta\vert_{\cF^0_s} \otimes \PP^\eta\vert_{\overbar{\cF}{}^0_{t+s}}(C) \big\rvert \leq c_\kappa \mathrm{e}^{-\kappa t}, \quad t \geq 0.$$
Here, $\PP^\eta\vert_{\cF_s^0 \otimes \overbar{\cF}{}^0_{t+s}}$ denotes the probability measure on $(\Omega \times \Omega,\cF_s^0 \otimes \overbar{\cF}{}^0_{t+s})$, defined as the image measure of $\PP^{\eta}$ under the canonical injection $\iota(\omega) = (\omega,\omega)$.
This implies that, for $A \times B \in \cF_s^0 \otimes \overbar{\cF}{}^0_{t+s}$, we have $\PP^\eta\vert_{\cF_s^0 \otimes \overbar{\cF}{}^0_{t+s}}(A \times B) = \PP^\eta(A \cap B)$. 
This demonstrates that $\beta$-mixing of a Markov process describes a form of asymptotic independence of its future and its past. 
In case $\eta = \mu$, we just write $\beta_\mu = \beta$, and it can be shown that the $\beta$-mixing coefficient reduces in this case to
$$\beta(t) = \int_{\cX} \lVert P_t(x, \cdot) - \mu \rVert_{\mathrm{TV}} \, \mu(\diff{x}),$$
$\lVert \cdot \rVert_{\mathrm{TV}}$ denoting the total variation norm. 
The central assumption in \cite{dexheimer20} is that the stationary Markov process $\X$ is exponentially $\beta$-mixing, i.e., there exist constants $\kappa,c_\kappa  > 0$ such that 
\begin{equation}\label{eq:mix}
\beta(t) \leq c_\kappa \mathrm{e}^{-\kappa t}, \quad t \geq 0.
\end{equation}
Under this condition, the following result is proved.

\begin{proposition}[Theorem 3.2 in \cite{dexheimer20}]\label{prop:mix}
Let $\mathcal{G}$ be a countable class of bounded real-valued functions $g$ satisfying $\mu(g) = 0$, and define
$$\G_T(g) = \frac{1}{\sqrt{T}}\int_0^T g(X_t) \diff{t}, \quad T > 0, g\in\mathcal G.$$
Suppose that $\X$ is stationary with invariant  distribution $\mu$ and exponentially $\beta$-mixing, and let $m_t \in (0, t\slash 4]$.
Then, there exist $\tau \in [m_t, 2m_t]$ and constants $C_1,C_2,c_1,c_2>0$ such that, for any $1\le p<\infty$,
\begin{equation} \label{eq: unimom}
\begin{split}
\left(\E^\mu\left[\sup_{g\in\mathcal G}|\G_t(g)|^p\right]\right)^{1/p}&\leq C_1\int_0^\infty\log\mathcal N\big(u,\mathcal G,\tfrac{2m_t}{\sqrt{t}}d_\infty\big)\d u+C_2\int_0^\infty\sqrt{\log \mathcal N(u,\mathcal G,d_{\G,\tau})}\diff u\\
&\quad+4\sup_{g\in\mathcal G}\Big(\frac{2m_t}{\sqrt{t}}\|g\|_\infty c_1p+ \lVert g\rVert_{\mathbb{G},\tau}c_2\sqrt p +\frac{1}{2}\lVert g \rVert_{\infty} c_\kappa \sqrt{t} \mathrm{e}^{-\frac{\kappa m_t}{p}}\Big).
\end{split}
\end{equation}
Here, for $f,g \in \mathcal{G}$, 
$$d^2_{\mathbb{G},T}(f,g) \coloneqq \mathrm{Var}\Big(\frac{1}{\sqrt{T}} \int_0^T (f-g)(X_t) \diff{t} \Big),\quad T>0,$$
defines a semi-distance and, for any semi-distance $d$ and $\varepsilon > 0$, $\mathcal{N}(\varepsilon, \mathcal{G}, d)$ denotes the covering number of $\mathcal{G}$ by $d$-balls of radius $\varepsilon$. 
\end{proposition}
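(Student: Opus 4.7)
The plan is to follow the classical block-decoupling and generic chaining strategy for suprema of additive functionals of exponentially $\beta$-mixing processes, with the two semi-distances $d_\infty$ and $d_{\mathbb{G},\tau}$ of the statement playing the roles of the uniform and variance proxies in a Bernstein-type moment bound for a decoupled block sum.

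The first step is to partition $[0,t]$ into an alternating sequence of $2N \sim t/\tau$ blocks of common length $\tau \in [m_t, 2m_t]$. Writing
$$H_k(g) = \frac{1}{\sqrt t}\int_{2(k-1)\tau}^{(2k-1)\tau} g(X_s)\,\diff{s}, \quad k = 1,\ldots,N,$$
and defining the ``even''-block analogues in the same way, the functional $\mathbb{G}_t(g)$ splits into two such sums plus a negligible boundary remainder. Iterative application of Berbee's coupling lemma then produces, on an enriched probability space, independent copies $H_k^\ast(g)$ of $H_k(g)$ satisfying
$$\PP\bigl(H_k \neq H_k^\ast \text{ for some } k \leq N\bigr) \leq N\beta(\tau) \leq c_\kappa N\,\mathrm{e}^{-\kappa m_t}.$$
Combining this coupling error with the crude bound $|H_k(g)| \leq (2m_t/\sqrt t)\|g\|_\infty$ yields, after balancing in the moment of order $p$, the exponential residual $\tfrac{1}{2}\|g\|_\infty c_\kappa\sqrt t\,\mathrm{e}^{-\kappa m_t/p}$ appearing in \eqref{eq: unimom}. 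The parameter $\tau$ is kept free at this stage and will be selected inside $[m_t, 2m_t]$ at the end by a Fubini-type averaging argument that absorbs the boundary contribution.

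The heart of the argument is the generic chaining step applied to the decoupled sum $\sum_k H_k^\ast(g)$. For $f,g \in \mathcal{G}$, each increment $H_k^\ast(f) - H_k^\ast(g)$ is bounded by $(2m_t/\sqrt t)d_\infty(f,g)$, while stationarity together with $N\tau \leq t$ gives $\mathrm{Var}\bigl(\sum_k (H_k^\ast(f) - H_k^\ast(g))\bigr) \leq d_{\mathbb{G},\tau}^2(f,g)$ (using that the independent block copies have matched marginals to a single stationary integral over $[0,\tau]$). A Bernstein-type moment inequality for independent sums (e.g.\ Pinelis, or Bousquet's version of Talagrand's inequality) therefore delivers, for every $p \geq 1$,
$$\Bigl\lVert \textstyle\sum_{k=1}^N \bigl(H_k^\ast(f) - H_k^\ast(g)\bigr)\Bigr\rVert_p \leq c_2\sqrt p\,d_{\mathbb{G},\tau}(f,g) + c_1\,p\,\tfrac{2m_t}{\sqrt t}d_\infty(f,g).$$
Feeding this two-part increment bound into the generic chaining bound of Talagrand/Dirksen then produces the two Dudley entropy integrals in \eqref{eq: unimom}, the subgaussian one coupled with $d_{\mathbb{G},\tau}$ and the subexponential one with $(2m_t/\sqrt t)d_\infty$, along with the associated uniform and variance penalties in the final bracket.

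The main obstacle will be the clean interplay between the two tail scales in the chaining step: because the increments have a mixed subgaussian/subexponential behaviour, a naive Dudley bound would only pick up the larger of the two contributions, and one genuinely needs a generic chaining argument à la Dirksen that splits the entropy integral along the crossover radius between the Gaussian and exponential regimes and delivers both terms separately for every $p \geq 1$. A secondary technical point is the averaging over $\tau \in [m_t, 2m_t]$, which has to be implemented so that the boundary remainder from the block decomposition and the $\tau$-dependence of the semi-distance $d_{\mathbb{G},\tau}$ are both absorbed into universal constants without eroding the claimed rates.
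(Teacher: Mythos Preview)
The paper does not supply its own proof of this proposition: it is quoted verbatim as Theorem~3.2 of \cite{dexheimer20}, so there is no argument in the present paper to compare against. Your outline is the standard route and matches the proof given in the cited reference: block decomposition of $[0,t]$ into pieces of length $\tau\in[m_t,2m_t]$, Berbee's coupling to replace the blocks by independent copies with total variation cost controlled by $\beta(\tau)$, a Bernstein-type moment bound for the resulting sum of independent bounded summands (yielding the two-scale increment control in $d_{\G,\tau}$ and $(2m_t/\sqrt{t})d_\infty$), and then Dirksen's generic-chaining bound for processes with mixed subgaussian/subexponential tails to produce the two entropy integrals and the two penalty terms in $p$. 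The exponential remainder $\tfrac12\lVert g\rVert_\infty c_\kappa\sqrt{t}\,\mathrm{e}^{-\kappa m_t/p}$ comes, as you say, from bounding $\lvert\G_t(g)\rvert\le\sqrt{t}\lVert g\rVert_\infty$ on the coupling-failure event and taking the $p$-th root of its probability. The only point where your sketch is a little loose is the handling of the factor $N\sim t/m_t$ multiplying $\beta(\tau)$ inside the $p$-th root; in the cited proof this polynomial factor is absorbed because the exponential $\mathrm{e}^{-\kappa m_t}$ dominates it, but one should check that the final constant in front of $\sqrt{t}\,\mathrm{e}^{-\kappa m_t/p}$ really comes out $p$-free.
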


This result covers a wide range of potential applications. For example, it can be used to find optimal upper bounds (regarding the $\sup$-norm risk over bounded domains) for nonparametric estimation of the invariant density for $\R^d$-valued Markov processes with transition densities (cf.~Sections 4 and 5 in \cite{dexheimer20}). 
Such results are derived from Proposition \ref{prop:mix} by bounding the (pseudo-) norms and thus the associated entropy integrals for the function class $\mathcal G$ related to the chosen estimation procedure. For $d_\infty$, this can be achieved by using the analytical properties of $\mathcal G$, while bounds on the pseudo-metric $d_{\G,\tau}$ are based on suitable bounds for the variance of integral functionals of $\X$.
As shown in \cite{dexheimer20}, for $d \geq 2$ this is taken care of using the exponential $\beta$-mixing property of $\X$ once we assume that, additionally, an on-diagonal heat kernel estimate is in place for the densities $(p_t)_{t \geq 0}$ of the Markov semigroup, i.e., there exists some constant $C > 0$ such that 
$$\forall t \in (0,1]\colon \quad \sup_{x,y\in \R^d} p_t(x,y) \leq Ct^{-d\slash 2}.$$
\paragraph{Application in the scalar setting}
It will be demonstrated in the sequel that the one-dimensional case can also be treated optimally within the framework described in Proposition \ref{prop:mix}, thereby distinguishing between two different situations.
In Section \ref{subsec: diff station}, we study kernel invariant density estimation (for scalar ergodic diffusions) which requires a careful balancing of \emph{bias} and stochastic error of the estimator (in case of pointwise risk, the well-known bias--variance tradeoff) by choosing an appropriate bandwidth $h$. 
In dimension $d=1$, the $\beta$-mixing property is not quite sufficient to guarantee variance bounds that are tight enough for proving optimal upper bounds on the convergence rates. 
We additionally require convergence of the semigroup densities to the invariant density at sufficient speed on compact sets, combined with a relaxation of the on-diagonal heat kernel estimate of the semigroup. 
In Appendix \ref{app:diff}, we demonstrate that classical assumptions on the diffusion process---as they are introduced in Section \ref{sec:diff} to ensure existence of a stationary solution---are tight enough to match the above requirements. 
This is of considerable independent interest since, in contrast to the local time arguments usually employed for the statistical analysis of scalar diffusions, the techniques generalize without much effort to the multivariate diffusion case. 
Our framework therefore arguably closes the gap between the relatively distinct approaches to statistical estimation of scalar and multivariate diffusions (see, e.g., \cite{dalrei06} vs.\ \cite{dalrei07} or \cite{dal03,dal05} vs.\ \cite{str15,str16}). 
Moreover, it potentially extends results obtained exclusively for symmetric diffusions to the general case since it is not reliant on functional inequalities, which are not well-suited to the non-reversible setup, see also the discussion in \cite{dexheimer20}.

Suppose that we can find an \emph{unbiased} estimator of the characteristic we are interested in.
In this situation, a fine analysis of the variance of Markovian functionals is not necessarily needed. 
This is, e.g., the case if we can express the quantity of interest as an integral wrt the stationary distribution of some stationary  Markov process $\X$, since then the continuous-time mean estimator 
$$\frac{1}{T} \int_0^T f(X_t) \diff{t}$$ is unbiased. If $\X$ is moreover $\beta$-mixing, we can make use of Proposition \ref{prop:mix} based on purely analytical arguments. 
This will become clear in Section \ref{subsec: ascending}, where we are investigating $\sup$-norm estimation of generator functionals $\mathcal{A}_H \gamma$ of the ascending ladder height process $\mathbf{H}$ belonging to a L\'evy process $\X$ via an unbiased mean estimator based on overshoots of $\X$. 
The thereby established $\sup$-norm bounds will be of central importance for the procedure in Section \ref{sec: data levy}. 

\subsection{Organization of the paper}
In Section \ref{sec:diff}, we develop a data-driven strategy for a singular control problem associated to a scalar diffusion process. The construction and error analysis is given in Section \ref{subsec:control_diffusion}, based on a minimax optimal estimation procedure for the stationary density under exponential $\beta$-mixing assumptions, which is carried out in Section \ref{subsec: diff station}. Appendix \ref{app:diff} complements the study of Section \ref{subsec: diff station} by demonstrating how the variance analysis of the estimator can be carried out in a self-contained fashion within the $\beta$-mixing framework without having to resort to local time arguments which do not generalize to higher dimensions. In Section \ref{sec:levy}, a data driven strategy for an impulse control problem with an underlying Lévy process is constructed. The statistical foundations for the estimation strategy of Section \ref{sec: data levy} are presented in Section \ref{subsec: ascending}. Central aspects of overshoot convergence as main ingredient to the statistical analysis are summarized in Appendix \ref{sec: appendix levy}.

\section{Data-driven singular controls for diffusions on the real line}\label{sec:diff}
We now introduce the singular control problem for underlying scalar diffusion processes, given as a solution of the It\={o}-type SDE
\begin{equation}\label{eq:dynamcs}
\d X_t\ =\ b(X_t)\d t+\sigma(X_t)\d W_t,
\end{equation}
$b,\sigma\colon \R\to\R$ some measurable functions and $(W_t)_{t\ge0}$ some standard Brownian motion on some probability space $(\Omega,\cF,\PP_b).$
 One motivation for considering such problems comes from investigating optimal dividend distributions \cite{alvarez2006class,asmussen1997controlled,cadenillas2007optimal}.
Another stream of literature deals with determining a policy that optimizes the expected cumulative present value of the harvesting \cite{alvarez1998optimal,hening2019asymptotic,lande1994optimal}. 
In particular for the latter application, it is natural to study an ergodic formulation, as it reflects the idea of considering sustainable harvesting guidelines, which we will also use here. 

Assume that for some constants $\overbar{\nu},\underbars\nu\in(0,\infty),$  $\sigma$ is continuous, differentiable and globally Lipschitz and satisfies $\underbars\nu\leq \left|\sigma(x)\right|\leq \overbar\nu$ for all $x \in \R$. For fixed constants $A,\gamma>0$ and $\C\geq 1$, define the set $\bm{\Sigma}=\bm{\Sigma}(\C,A,\gamma,\sigma)$ as
\[\bm{\Sigma}\coloneqq\Big\{b \in \operatorname{Lip}(\R):|b(x)| \leq\C(1+|x|),\ \forall|x|>A\colon \frac{b(x)}{\sigma^2(x)}\operatorname{sgn}(x)\leq -\gamma\Big\}.\]
Note that a linear growth condition for Lipschitz drift $b$ is always satisfied, but the class $\bm{\Sigma}$ specifies a global magnitude of this maximal growth in terms of the constant $\C$. Moreover, given $\sigma$ as above and any $b\in\bm{\Sigma}$, an immediate consequence is that there exists a strong solution $\X$ of the SDE \eqref{eq:dynamcs} for given initial value $X_0$ independent of $\bm{W}$. If we let $\PP^x_b = \PP_b(\cdot \vert X_0 = x)$, then $(\X,(\PP^x_b)_{x \in \R})$  defines a non-explosive Feller Markov process and thus in particular a Borel right process. Moreover, $\X$ has a unique stationary distribution $\mu=\mu_b$ having invariant density
\begin{equation} \label{eq: inv dens diff}
\rho(x)=\rho_b(x)\coloneqq \frac{1}{C_{b,\sigma}\sigma^2(x)}\exp\left(\int_0^x\frac{2b(y)}{\sigma^2(y)}\d y\right),\quad x\in \R,
\end{equation}
with normalizing constant 
$C_{b,\sigma}\coloneqq \int_{\R}\frac{1}{\sigma^2(u)}\ \exp\left(\int_0^u\frac{2b(y)}{\sigma^2(y)}\d y\right)\d u$. 
In the following, we will abbreviate $\PP^{\mu_b}_b = \PP_b$ and, if there is no room for confusion, also just write $\PP$ instead. For any $b\in\bm{\Sigma}$, $\sigma^2\rho_b$ is continuously differentiable and there exists a constant $\rho^\ast>0$ (depending only on $\C,A,\gamma,\underbars\nu,\overbar\nu$) such that 
\begin{equation}\label{eq:rho_uniformly_bounded1}
\sup_{b\in\bm{\Sigma}(\C,A,\gamma,\sigma)}\max\left\{\|\rho_b\|_{\infty},\ \|(\sigma^2\rho_b)'\|_\infty\right\} < \rho^\ast.
\end{equation}
{
Furthermore, for any fixed bounded set $D\subset \R$, there exists some $\rho_\ast>0$ (depending again only on $\bm{\Sigma}$) such that
\begin{equation}\label{eq:rho_uniformly_bounded2}
\forall x\in D,\quad \inf_{b\in\bm{\Sigma}(\C,A,\gamma,\sigma)}\rho_b(x)\ge \rho_\ast.
\end{equation}
}

The controls used to formulate the problem are of the form $\mathbf Z=(U_t,D_t)_{t\geq0}$ for non-decreasing, right-continuous and adapted processes $\mathbf U$ and $\mathbf D$.
Here, $U_t$ and $D_t$ denote the cumulative upwards and downwards controls,  resp. These processes can be decomposed into singular and jump part as 
\[U_t=U_t^c+\sum_{0\leq s \leq t} (U_s-U_{s-}),\;D_t=D_t^c+\sum_{0\leq s \leq t} (D_s-D_{s-}),\]
where $\mathbf U^c$ and $\mathbf D^c$ are continuous. In the following, we will mostly deal with a special class of  controls for which the jump part is absent (with a possible exception at $t=0$): $\mathbf U$ and $\mathbf D$ are associated to the local times at certain fixed points $c,d$.


We denote the set of all controls by $\bm \Lambda$ and, for each $\mathbf Z\in \bm\Lambda$, we define the controlled process $\mathbf X^Z$ as the solution to 
\begin{equation*}
	\d X_t^Z\ =\ b(X_t^Z)\d t+\sigma(X_t^Z)\d W_t+\diff U_t-\diff D_t,
\end{equation*}
where we work under the assumption that $b\in\bm{\Sigma}$, implying in particular that the uncontrolled process $\mathbf X= \mathbf X^0$ has a stationary distribution $\rho=\rho_b$. 

The problem to be studied is now to determine the minimal value and the minimizer of
\begin{align}\label{eq:problem}
	\limsup_{T\to\infty}\frac{1}{T}\left(\int_0^Tc(X^Z_s)\d s + q_uU_T+q_dD_T\right),
\end{align}
where $c$ is a continuous, nonnegative function with $$\sup_{b\in\bm{\Sigma}}\int c(x) \rho_b(x)\d x<\infty$$
modelling the running costs and $q_u,q_d$ are positive constants describing the (proportional) costs associated with applying a control. 
We can interpret our goal as keeping $\X$ close to the target state $0$, say, and therefore assume that $c$ has a minimum in 0. 
The goal in the sequel is to find a data-driven strategy for problem \eqref{eq:problem} when the drift $b$ of the underlying process is unknown. 
While parts of the following analysis are similar to the one in \cite{christensen20}, it here turns out to be essential to control the $\sup$-norm risk of estimators of the characteristics (precisely, the invariant density $\rho_b$) of $\X$ solving \eqref{eq:dynamcs}.

\subsection{Estimating the stationary density of ergodic diffusion processes} \label{subsec: diff station}
We first show how the underlying statistical problem for the uncontrolled process can be integrated into our general framework presented in Section \ref{sec:2}.
For a large class of ergodic scalar diffusion processes $\X$ solving \eqref{eq:dynamcs} it is known that, given continuous observations $(X_t)_{0\le t\le T}$, the invariant density can be estimated with a parametric rate of convergence. For an overview, we refer to Sections 1.3.2 and 4.2 in \cite{kut04}.
It is however not straightforward to extend bounds on the pointwise or $L^2$ risk to the $\sup$-norm bounds required for our application.
A corresponding result is given in Corollary 13 in \cite{aeckerle18} whose proof, however, relies substantially on the use of diffusion local time.
We show how this behaviour can also be deduced from mixing properties of the diffusion.

Given some fixed domain $D\subseteq \R$, constants $\beta,\coa,\cob,\coc>0$ and a measurable function $V \geq 1$, introduce the set $\tilde{\bm{\Sigma}}_D(\beta)=\tilde{\bm{\Sigma}}_D(\beta,\coa,\cob,\coc)$
\begin{equation}\label{def:tildesigma}
\tilde{\bm{\Sigma}}_D(\beta)\coloneqq 
\bigg\{b\in\bm{\Sigma}: \ \|P_t(x,\cdot)-\mu_b\|_{\operatorname{TV}}\le \cob  V(x)\e^{-\coa t} \text{ with } \mu_b(V) \le\coc\text{ and }\rho_b\in C^\beta(D)\bigg\}.
\end{equation}
Note that for diffusions $\X$ with drift $b \in \tilde{\bm{\Sigma}}_D(\beta)$ it holds that $\X$ is exponentially ergodic, i.e., the total variation distance between the marginal laws of $\X$ and the invariant distribution decreases exponentially fast in time, and  $\X$ is exponentially $\beta$-mixing with mixing coefficient $\beta(t) \leq \cob \coc \mathrm{e}^{-\coa t}$, which is independent of $b$. As demonstrated in Proposition \ref{lem:rfunction} and Lemma \ref{lem: diff mix}, exponential ergodicity and the exponential $\beta$-mixing property are satisfied for any $\X$ such that the coefficients $b \in \bm{\Sigma}$ and $\sigma$ are globally Lipschitz. Apart from the assumption on the H\"older continuity of $\rho_b$, restricting the class $\bm{\Sigma}$ to $\tilde{\bm{\Sigma}}_D$ should therefore be understood as a technical device to obtain \emph{uniform} control on the coefficients in the exponential $\beta$-mixing bound, which is needed for the upper bound in the minimax sense provided in Theorem \ref{theo:invdens}. In the following, we consider the supremum norm $\left\|f\right\|_{\infty}=\left\|f\right\|_{D,\infty}$ on $D$. 

\begin{theorem}[concentration of invariant density estimators]\label{theo:invdens}
Fix some open and bounded set $D\subset \R$, assume that $b\in\tilde{\bm{\Sigma}}_D(\beta+1)$, for some $\beta>0$, and let $Q$ be a compactly supported, Lipschitz-continuous kernel function of order $\lfloor\beta+1\rfloor$. 
Define the estimator 
\begin{equation}\label{def:rhohat}
\hat\rho_T(x)\coloneqq \frac{1}{\sqrt{T}(\log T)^2}\int_0^TQ\left(\frac{\sqrt T(x-X_u)}{(\log T)^2}\right)\d u,\quad x\in D.
\end{equation} 
Then, for any $p\geq 1$,
\begin{equation}\label{eq: unif rate station}
	\sup_{b\in\tilde{\bm{\Sigma}}_D(\beta+1)}\left(\E^{\mu_b}_b\left[\left\|\hat\rho_T - \rho_b\right\|^p_\infty\right]\right)^{1/p}
\in \mathsf{O}\Big(\sqrt{\tfrac{\log T}{T}}\Big).
\end{equation}
\end{theorem}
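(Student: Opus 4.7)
The plan is to perform the standard bias/stochastic splitting
\[
\hat\rho_T(x)-\rho_b(x)\ =\ \bigl(\E^{\mu_b}_b\hat\rho_T(x)-\rho_b(x)\bigr)+\bigl(\hat\rho_T(x)-\E^{\mu_b}_b\hat\rho_T(x)\bigr)
\]
and control the two summands separately, with the deterministic term handled by kernel smoothing and the stochastic fluctuation term handled by Proposition \ref{prop:mix}. For the bias, since $Q$ is a kernel of order $\lfloor\beta+1\rfloor$ and $\rho_b\in C^{\beta+1}(D)$ with H\"older norm uniformly bounded on $\tilde{\bm\Sigma}_D(\beta+1)$, a standard Taylor expansion around $x$ shows that
\[
\sup_{b\in\tilde{\bm\Sigma}_D(\beta+1)}\sup_{x\in D}\big|\E^{\mu_b}_b\hat\rho_T(x)-\rho_b(x)\big|\ \lesssim\ h_T^{\,\beta+1},\qquad h_T\coloneqq \tfrac{(\log T)^2}{\sqrt T},
\]
which is $\mathsf{o}\bigl(\sqrt{\log T/T}\bigr)$ for any $\beta>0$ and contributes only negligibly to the claimed rate.

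For the stochastic term, fix a countable dense $D_0\subset D$ and set
\[
g_x(y)\coloneqq h_T^{-1}Q\bigl((x-y)/h_T\bigr)-\E^{\mu_b}_b\bigl[h_T^{-1}Q\bigl((x-X_0)/h_T\bigr)\bigr],\qquad x\in D_0,
\]
so that $\sqrt T\bigl(\hat\rho_T(x)-\E^{\mu_b}_b\hat\rho_T(x)\bigr)=\G_T(g_x)$. Membership $b\in\tilde{\bm\Sigma}_D(\beta+1)$ guarantees uniform exponential $\beta$-mixing of $\X$ with constants $c_\kappa,\kappa$ independent of $b$, so Proposition \ref{prop:mix} applies to $\mathcal G=\{g_x:x\in D_0\}$. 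I would then bound the relevant covering quantities: $\|g_x\|_\infty\lesssim h_T^{-1}$, and Lipschitz continuity of $Q$ yields $d_\infty(g_x,g_{x'})\lesssim h_T^{-2}|x-x'|$, hence $\log\mathcal N\bigl(u,\mathcal G,\tfrac{2m_T}{\sqrt T}d_\infty\bigr)\lesssim\log_+\bigl(\tfrac{m_T|D|}{\sqrt T\,h_T^{2}\,u}\bigr)$; the first entropy integral in \eqref{eq: unimom} is therefore a polylog of $T$, and with the choice $m_T=C_p\log T$ for $C_p>p/\kappa$ the term $\tfrac{m_T}{\sqrt T}\|g_x\|_\infty$ is of order $T^{-1/2}(\log T)^{\mathsf{O}(1)}$ while the exponential remainder $\tfrac{1}{2}\|g_x\|_\infty c_\kappa \sqrt T\,\e^{-\kappa m_T/p}$ is negligible.

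The decisive step, and the main obstacle, is controlling the variance pseudo-metric $d_{\G,\tau}$. In $d=1$ the $\beta$-mixing bound alone only yields $\|g_x\|_{\G,\tau}^2\lesssim h_T^{-1}$, which is too crude. The auxiliary bounds developed in Appendix \ref{app:diff}---which strengthen the $\beta$-mixing input by a convergence estimate for the transition densities $p_t(\cdot,\cdot)\to\rho_b$ on compacts together with a relaxed on-diagonal kernel bound---instead give $\|g_x\|_{\G,\tau}^2\lesssim \log(1/h_T)\asymp\log T$ and $d_{\G,\tau}(g_x,g_{x'})^2\lesssim h_T^{-1}|x-x'|$, uniformly in $b\in\tilde{\bm\Sigma}_D(\beta+1)$. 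These bounds are tight enough that the second entropy integral in \eqref{eq: unimom} is of order $\sqrt{\log T}$ and the $\|g_x\|_{\G,\tau}c_2\sqrt p$ term is likewise of order $\sqrt{\log T}$. Combining the four summands in \eqref{eq: unimom} yields
\[
\sup_{b\in\tilde{\bm\Sigma}_D(\beta+1)}\Bigl(\E^{\mu_b}_b\Bigl[\sup_{x\in D_0}|\G_T(g_x)|^p\Bigr]\Bigr)^{1/p}\ =\ \mathsf{O}\bigl(\sqrt{\log T}\bigr),
\]
which, after dividing by $\sqrt T$ and passing from the countable dense net $D_0$ back to $D$ by continuity of $\hat\rho_T$ and $\rho_b$, combines with the bias bound to give the claimed rate \eqref{eq: unif rate station}. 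Uniformity in $b$ throughout is a direct consequence of the fact that all constants in the class $\tilde{\bm\Sigma}_D(\beta+1)$ (the H\"older norm bound, the mixing constants $c_\kappa,\kappa$, and the uniform invariant density bounds \eqref{eq:rho_uniformly_bounded1}, \eqref{eq:rho_uniformly_bounded2}) and in the appendix estimates are independent of the specific drift $b$.
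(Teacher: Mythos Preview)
Your overall architecture---bias/stochastic decomposition, reduction to Proposition~\ref{prop:mix} applied to the centered kernel class, choice $m_T\asymp \log T$---matches the paper's proof. The bias step and the handling of the $d_\infty$-entropy integral and the exponential remainder are fine. The gap is in the variance step, and it does break the argument as written.

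You state that Appendix~\ref{app:diff} (via the transition-density convergence bound \ref{ass: conv bound} and the relaxed heat-kernel bound \ref{ass: density bound}) yields
\[
\|g_x\|_{\G,\tau}^2\ \lesssim\ \log(1/h_T)\ \asymp\ \log T .
\]
This is precisely the \emph{suboptimal} bound that the appendix discusses and then explicitly rejects: balancing the heat-kernel estimate against exponential $\beta$-mixing alone produces the extra factor $|\log(\lebesgue(\mathcal S))|$, and the paper states that ``this upper bound differs by a logarithmic factor \ldots and does \emph{not} allow to prove the optimal convergence rate.'' What Appendix~\ref{app:diff} actually delivers (Lemma~\ref{lem:Qbound}), and what the paper's own proof obtains via the occupation-times formula together with the local-time variance bound of Dalalyan--Rei\ss, is the \emph{sharp} estimate
\[
\Var\!\Big(\tfrac{1}{\sqrt\tau}\!\int_0^\tau Q\!\big(\tfrac{x-X_u}{h}\big)\d u\Big)\ \le\ C_0\,\|\rho\|_\infty\,h^2,
\]
which in your normalisation reads $\|g_x\|_{\G,\tau}\le C$ (a constant, no logarithm). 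This constant diameter is what makes the second entropy integral of order $\sqrt{\log T}$.

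With your weaker bound the entropy calculation does not close. If the $d_{\G,\tau}$-diameter of $\mathcal G$ is $R\asymp\sqrt{\log T}$ and your covering estimate gives $\mathcal N(\varepsilon,\mathcal G,d_{\G,\tau})\lesssim |D|/(h_T\varepsilon^2)$, then
\[
\int_0^{R}\!\sqrt{\log\mathcal N(\varepsilon,\mathcal G,d_{\G,\tau})}\,\d\varepsilon
\ \lesssim\ R\,\sqrt{\log(1/h_T)}\ \asymp\ \sqrt{\log T}\cdot\sqrt{\log T}\ =\ \log T,
\]
not $\sqrt{\log T}$ as you claim. Consequently your argument only yields $(\E\sup|\G_T(g_x)|^p)^{1/p}=\mathsf O(\log T)$ and hence the rate $\log T/\sqrt T$, missing the target $\sqrt{\log T/T}$ by exactly the factor the paper warned about. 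The fix is to invoke the sharp variance bound (either the local-time route in the paper's proof or Lemma~\ref{lem:Qbound}); with $\|g_x\|_{\G,\tau}=\mathsf O(1)$ the diameter is constant and the entropy integral is indeed $\mathsf O(\sqrt{\log T})$. Also note that the paper does not control $d_{\G,\tau}$ via a Lipschitz-in-$x$ bound as you do, but rather through $d_{\G,\tau}\lesssim\|\cdot\|_{L^2(\mu)}$ combined with a VC-type covering number (Lemma~\ref{lemma: covering numbers}); this is what makes the entropy integral tractable once the sharp variance bound is in place.
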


\begin{proof}
Fix $p\in[1,\infty)$ and $b \in \tilde{\bm{\Sigma}}(\beta +1)$, and denote $h=h_T\coloneqq (\log T)^2/\sqrt T$, $m_T\coloneqq p\log T/\coa $, $\E_b = \E$, $\rho_b = \rho$
\begin{align*}
\mathcal G&\coloneqq \left\{Q\left(\frac{x-\cdot}{h}\right)-\E\Big[ Q\Big(\frac{x-X_0}{h}\Big)\Big]: x\in D\cap \mathbb Q\right\},\\
\mathbb H_T(x)&\coloneqq\hat\rho_T(x)-\E\big[\hat\rho_T(x)\big]=\frac{1}{\sqrt Th}\mathbb G_T\left(Q\Big(\frac{x-\cdot}{h}\Big)-\E\Big[ Q\Big(\frac{x-X_0}{h}\Big)\Big]\right).
\end{align*}
Given any $b\in\tilde{\bm{\Sigma}}_D(\beta+1)$, it follows from Proposition \ref{lem:rfunction} and Lemma \ref{lem:prop21} that the associated diffusion process solving \eqref{eq:dynamcs} is exponentially $\beta$-mixing. 
Thus, we may apply Proposition \ref{prop:mix} for bounding
\begin{equation}\label{def:mathbb H}
\left(\E\left[\sup_{x\in D}|\mathbb H_T(x)|^p\right]\right)^{1/p}=\left(\E\left[\sup_{x\in D\cap\mathbb Q}|\mathbb H_T(x)|^p\right]\right)^{1/p}.
\end{equation}
Let $\tau$ as in Proposition \ref{prop:mix}, and denote by $L_T(y)$ the local time of $\X$ at the point $y\in\R$ up to time $T\ge0$, fulfilling in particular $\E[L_T(y)]=T\rho(y)\sigma^2(y)$.
Using the occupation times formula and Minkowski's integral inequality, one obtains
\begin{equation}
\begin{split}\label{eq:Qbound}
\Var\left(\int_0^\tau Q\left(\frac{x-X_u}{h}\right)\diff{u}\right)&
= \E\left[\left(\int_{\R} Q\left(\frac{x-y}{h}\right)\left(\frac{L_\tau(y)}{\sigma^2(y)}-\tau\rho(y)\right)\diff{y}\right)^2\right]\\
&= h^2\ \E\left[\left(\int_{\R} Q(v)\left(\frac{L_\tau(x-hv)}{\sigma^2(x-hv)}-\tau\rho(x-hv)\right)\diff{v}\right)^2\right]\\
&\le h^2\int_{\R} Q(v)\ \E\left[\left(\frac{L_\tau(x-hv)}{\sigma^2(x-hv)}-\tau\rho(x-hv)\right)^2\right]\diff{v}\\
&\le h^2\sup_{v\in\operatorname{supp}(Q)}\frac{\Var\left(L_\tau(x-hv)\right)}{\sigma^4(x-hv)}\\
&\le\ h^2 \underbars\nu^{-4}C_0\tau\sup_{v\in\operatorname{supp}(Q)}\rho(x-hv),
\end{split}
\end{equation}
where the last estimate follows from Proposition 5.1 in \cite{dalrei06} and $C_0>0$ is a constant depending only on the class $\bm{\Sigma}$.
Thus, 
\[\sup_{f,g\in\mathcal G}d_{\G,\tau}(f,g)=\sup_{f,g\in\mathcal G}\sqrt{\Var\left(\frac{1}{\sqrt \tau}\int_0^\tau(f-g)(X_s)\diff{s}\right)}\le h\mathfrak D, \quad\text{ for }\mathfrak{D}\coloneqq \underbars\nu^{-2}\sqrt{C_0\rho^\ast},
\]
such that $\mathcal N(\ep,\mathcal G,d_{\G,\tau})=1$ for $\ep\ge h\mathfrak D$.
Similarly, for any $g\in\mathcal G$,
\[\Var\left(\frac{1}{\sqrt \tau}\int_0^\tau g(X_u)\diff{u}\right)\le \sup_{x\in\operatorname{supp}(g)}\frac{\Var(L_\tau(x))}{\tau\sigma^4(x)\rho(x)}\ \|g\|_{L^2(\mu)}^2\le
\mathfrak D^2\rho_\ast^{-1}\|g\|_{L^2(\mu)}^2.
\]
Assuming that $h\le\mathfrak{A}/(8\sqrt{\rho_\ast})$ and denoting by $L$ the Lipschitz constant of $Q$, it then follows from Lemma \ref{lemma: covering numbers} that
\begin{align*}
\int_0^\infty\sqrt{\log\mathcal N(\ep,\mathcal G,d_{\G,\tau})} \d \ep&\le \int_0^{h\mathfrak D}\sqrt{\upsilon\log\left(\frac{\mathfrak{A D}}{\sqrt{\rho_\ast}\ep}\right)}\diff{\ep}\ \le\
4\mathfrak D h\sqrt{\upsilon\log\left(\frac{\mathfrak{A}}{h\sqrt{\rho_\ast}}\right)},\\
\int_0^\infty\log \mathcal N\big(\ep,\mathcal G,\tfrac{2m_T}{\sqrt{T}}d_\infty\big)\d\ep&\leq \frac{8m_T}{\sqrt{T}}\lVert Q \rVert_\infty \Big(1 + \log \Big(\frac{L \mathrm{diam}(D)}{\Vert Q \rVert_\infty h} \Big) \Big).
\end{align*}
Thus, Proposition \ref{prop:mix} gives that \eqref{def:mathbb H} is upper-bounded by
\begin{equation}\begin{split}\label{eq:stocherror}
&\frac{1}{\sqrt Th}\bigg(8 C_1 \frac{m_T}{\sqrt{T}}\lVert Q \rVert_\infty \Big(1 + \log \Big(\frac{L \mathrm{diam}(D)}{\Vert Q \rVert_\infty h} \Big) \Big)
+4 C_2 \mathfrak D h\sqrt{\upsilon\log\left(\frac{\mathfrak{A}}{h\sqrt{\rho_\ast}}\right)}\\&\hspace*{7em}+\frac{16m_T}{\sqrt{T}}\|Q\|_\infty c_1 p+2h\mathfrak D c_2\sqrt p + 4\lVert Q\rVert_\infty \cob\coc\sqrt{T} \mathrm{e}^{-\frac{\coa m_T}{p}}\bigg) \in\mathsf{O}\Big(\sqrt{\tfrac{\log T}{T}}\Big),
\end{split}\end{equation}
where the last implication follows from the choice of $h=h_T$ and $m_T$.
The conditions on the order of the kernel function $Q$ and the fact that $\beta>0$ further imply that, for any $x\in D$,
\[
\left|\E[\hat\rho_T(x)]-\rho(x)\right|=\left|(\rho\ast Q_h-\rho)(x)\right| \lesssim h^{\beta+1} \in\mathsf{O}\Big(\sqrt{\tfrac{\log T}{T}}\Big).\]
In combination with the upper bound on the stochastic error stated in \eqref{eq:stocherror}, we thus obtain \eqref{eq: unif rate station}.
\end{proof}
For the sake of a concise presentation, we have here restricted to demonstrating how a bound for the $\sup$-norm risk can be derived directly from Proposition \ref{prop:mix}. 
Main components for doing so are the verification of the mixing property and the derivation of bounds of the variance of integral functionals. 
Further details on these two steps are summarised in Section \ref{app:diff}. 
In particular, we show there how a variance bound as in \eqref{eq:Qbound} can be derived \emph{without} relying on specific properties of diffusion local time. 

Although assuming stationarity of the process is standard in the statistical literature, this assumption can be slightly problematic for practical purposes. In the present scenario this will become evident for our proof technique of the data-driven control strategy, where we require $\sup$-norm bounds under $\PP^0_b$ instead of $\PP^{\mu_b}_b$. To extend the rate result \eqref{eq: unif rate station} from the stationary regime to the non-stationary case, we use the following auxiliary result, which shows that exponential convergence allows to exactly quantify the loss imposed by non-stationarity for nonparametric estimation. We focus on the case $p=1$ as the relevant result for our purposes.

\begin{lemma}\label{lem:conv int}
Let $\mathcal{X}$ be a topological space and $(\mathbf{Y},(\PP^x)_{x \in \mathcal{X}})$ an $\mathcal{X}$-valued exponentially ergodic Markov process, i.e., there exist a function $V\colon \mathcal{X} \to [1,\infty)$ and constants $c,\kappa > 0$ such that, for any $x \in \mathcal{X}$,
$$\big\Vert \PP^x(Y_t \in \cdot) - \mu \big\Vert_{\mathrm{TV}} \leq cV(x)\mathrm{e}^{-\kappa t}, \quad t \geq 0,$$
where $\mu$ is the invariant distribution of $\mathbf{Y}$. Then, for any bounded $g \in \mathcal{B}(\mathcal{X}^2)$, $x \in \mathcal{X}$ and $T$ large enough such that $T > \kappa^{-1} \log T$, it holds that 
\begin{align*}
&\Big\vert \E^x\Big[\sup_{y \in \mathcal{X}}\Big\vert \frac{1}{T} \int_0^T g(y,Y_s) \diff{s}\Big\vert \Big] - \E^\mu\Big[\sup_{y \in \mathcal{X}}\Big\vert \frac{1}{T} \int_0^T g(y,Y_s) \diff{s}\Big\vert \Big] \Big\vert\\
&\quad \leq \lVert g \rVert_\infty \Big(\frac{2 \log T}{\kappa T} + cV(x) \frac{1}{T} \Big).
\end{align*}
\end{lemma}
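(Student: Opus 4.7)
The plan is to split the time integral at $T_0 \coloneqq \kappa^{-1}\log T$ (which lies in $(0,T)$ by assumption) and treat the two pieces separately: the short initial segment $[0,T_0]$ is controlled via a trivial $\Vert g\Vert_\infty$ bound, while on the long tail $[T_0,T]$ the Markov property reduces the problem to comparing the laws of $Y_{T_0}$ under $\PP^x$ and $\mu$, which is precisely what exponential ergodicity in total variation quantifies.

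First I would introduce the path-functional
\[ H_T(\mathbf{y}) \coloneqq \sup_{y \in \cX}\Big\lvert \int_0^T g(y,y_s) \diff{s} \Big\rvert, \qquad G_T(\mathbf{y}) \coloneqq \sup_{y \in \cX}\Big\lvert \int_{T_0}^T g(y,y_s) \diff{s} \Big\rvert, \]
and use the reverse triangle inequality for the supremum, together with $\lvert g\rvert \le \Vert g\Vert_\infty$, to deduce
\[ \lvert H_T(\mathbf{y}) - G_T(\mathbf{y}) \rvert \le T_0 \Vert g\Vert_\infty \quad \text{pointwise in } \mathbf{y}. \]
Dividing by $T$ and taking expectations under both $\PP^x$ and $\PP^\mu$, the triangle inequality bounds the quantity of interest by $\frac{2 T_0}{T}\Vert g\Vert_\infty + \frac{1}{T}\bigl\lvert \E^x[G_T(\mathbf{Y})] - \E^\mu[G_T(\mathbf{Y})]\bigr\rvert$, and $2T_0/T = 2\log T/(\kappa T)$ already matches the first term on the right-hand side of the claim.

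Next I would handle $G_T$ using the Markov property. Since $G_T$ depends only on $(Y_s)_{s \ge T_0}$, a time-shift gives $G_T(\mathbf{Y}) = \sup_{y}\lvert \int_0^{T-T_0} g(y,Y_{T_0+u})\diff{u}\rvert$, so conditioning on $\cF_{T_0}^0$ yields $\E^x[G_T(\mathbf{Y})] = \int_\cX \Psi(z)\, \PP^x(Y_{T_0}\in\diff{z})$ with
\[ \Psi(z) \coloneqq \E^z\Big[\sup_{y \in \cX}\Big\lvert \int_0^{T-T_0} g(y,Y_u)\diff{u}\Big\rvert \Big], \qquad \Vert \Psi \Vert_\infty \le (T-T_0)\Vert g\Vert_\infty. \]
By stationarity of $\mu$, the same identity reads $\E^\mu[G_T(\mathbf{Y})] = \int_\cX \Psi(z)\,\mu(\diff{z})$. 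Therefore, by the dual characterisation of total variation,
\[ \bigl\lvert \E^x[G_T(\mathbf{Y})] - \E^\mu[G_T(\mathbf{Y})]\bigr\rvert \le \Vert \Psi \Vert_\infty \cdot \Vert \PP^x(Y_{T_0}\in\cdot) - \mu \Vert_{\mathrm{TV}} \le (T-T_0)\Vert g\Vert_\infty \cdot cV(x)\mathrm{e}^{-\kappa T_0}. \]
With $T_0 = \kappa^{-1}\log T$ one has $\mathrm{e}^{-\kappa T_0} = 1/T$, so after dividing by $T$ this contribution is bounded by $\Vert g\Vert_\infty cV(x)/T$, which is exactly the second term on the right-hand side. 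Combining the two pieces gives the claim.

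I do not expect any serious obstacle; the only mildly delicate point is that $G_T$ (depending on the whole future of $\mathbf{Y}$ after $T_0$) is a genuinely path-dependent functional, so one needs to invoke the Markov property at the level of shifted paths rather than for a single time coordinate. Measurability of $\Psi$ and the supremum functional is routine (e.g., by rational approximation of the supremum or, as done implicitly elsewhere in the paper, by noting that $\cX$ may be replaced by a countable dense subset under mild continuity of $g$).
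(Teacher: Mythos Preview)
Your proof is correct and follows essentially the same route as the paper: split at $T_0=\kappa^{-1}\log T$, control the short piece by the trivial $\lVert g\rVert_\infty$ bound via the reverse triangle inequality, and handle the tail by the Markov property together with the total-variation characterisation of exponential ergodicity. The only cosmetic difference is that the paper bounds the analogue of your $\Psi$ by $\lVert g\rVert_\infty T$ rather than $(T-T_0)\lVert g\rVert_\infty$, which leads to the same final estimate.
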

\begin{proof}
Let 
$$\theta(y,u,v) = \int_{u}^v g(y,X_s) \diff{s}, \quad 0 \leq u \leq v, y \in \mathcal{X},$$
and 
$$\varphi(x) \coloneqq \E^x\big[\lVert\theta(\cdot,0,T-\kappa^{-1}\log T)\rVert_\infty\big], \quad x \in \mathcal{X}.$$
Then, 
\begin{align*} 
&\Big\vert \E^x\Big[\sup_{y \in \mathcal{X}}\Big\vert \frac{1}{T} \int_0^T g(y,Y_s) \diff{s}\Big\vert \Big] - \E^\mu\Big[\sup_{y \in \mathcal{X}}\Big\vert \frac{1}{T} \int_0^T g(y,Y_s) \diff{s}\Big\vert \Big] \Big\vert\\
&\leq \frac{1}{T} \Big( \E^x\big[\big\lvert \lVert \theta(\cdot, 0, T)\rVert_\infty - \lVert \theta(\cdot, \kappa^{-1}\log T, T)\rVert_\infty \big\vert\big] + \E^\mu\big[\big\lvert \lVert \theta(\cdot, 0, T)\rVert_\infty - \lVert \theta(\cdot, \kappa^{-1}\log T, T)\rVert_\infty\big\vert\big]\Big)\\
&\quad+ \frac{1}{T} \Big\vert\E^x\big[\lVert\theta(\cdot,\kappa^{-1}\log T, T)\rVert_\infty\big] - \E^\mu\big[\lVert\theta(\cdot,\kappa^{-1}\log T, T)\rVert_\infty\big] \Big\vert\\
&\leq \frac{1}{T} \Big( \E^x\big[\lVert \theta(\cdot, 0, \kappa^{-1} \log T)\rVert_\infty \big] + \E^\mu\big[\lVert \theta(\cdot, 0, \kappa^{-1} \log T)\rVert_\infty \big]\Big)\\
&\quad+ \frac{1}{T} \Big\vert\E^x\big[\lVert\theta(\cdot,\kappa^{-1}\log T, T)\rVert_\infty\big] - \E^\mu\big[\lVert\theta(\cdot,\kappa^{-1}\log T, T)\rVert_\infty\big] \Big\vert\\
&\leq 2\lVert g \rVert_\infty \frac{\log T}{\kappa T} + \frac{1}{T} \Big\vert\E^x\big[\lVert\theta(\cdot,\kappa^{-1}\log T, T)\rVert_\infty\big] - \E^\mu\big[\lVert\theta(\cdot,\kappa^{-1}\log T, T)\rVert_\infty\big] \Big\vert\\
&= 2\lVert g \rVert_\infty \frac{\log T}{\kappa T} + \frac{1}{T}\big\vert\E^x\big[\varphi(Y_{\kappa^{-1}\log T})\big] - \mu(\varphi)\big\vert,
\end{align*}
where for the second inequality we used reverse triangle inequality for the first two summands and the last equality is a consequence of the Markov property of $\mathbf{Y}$ and stationarity of $\mathbf{Y}$ under $\PP^\mu$. Using that $\lVert \varphi \rVert_\infty \leq \lVert g \rVert_\infty T$, exponential ergodicity of $\mathbf{Y}$ yields 
\begin{align*} 
&\Big\vert \E^x\Big[\sup_{y \in \mathcal{X}}\Big\vert \frac{1}{T} \int_0^T g(y,Y_s) \diff{s}\Big\vert \Big] - \E^\mu\Big[\sup_{y \in \mathcal{X}}\Big\vert \frac{1}{T} \int_0^T g(y,Y_s) \diff{s}\Big\vert \Big] \Big\vert\\
&\leq 2\lVert g \rVert_\infty\frac{\log T}{\kappa T} + cV(x)\lVert g \rVert_\infty \mathrm{e}^{-\kappa  (\kappa^{-1} \log T)}\\
&= 2\lVert g \rVert_\infty\frac{\log T}{\kappa T} + cV(x)\lVert g \rVert_\infty \frac{1}{T},
\end{align*}
as claimed.
\end{proof}
With this at hand, we obtain a non-stationary version of Theorem \ref{theo:invdens}.

\begin{corollary}\label{coro:invdens}
Given the assumptions from Theorem \ref{theo:invdens}, it holds for any $x \in \R$ that 
\begin{equation}\label{eq: unif rate nonstation}
\sup_{b\in\tilde{\bm{\Sigma}}_D(\beta+1)}\E^x_b\left[\left\|\hat\rho_T - \rho_b\right\|_\infty\right]
\in \mathsf{O}\Big(\sqrt{\tfrac{\log T}{T}}\Big).
\end{equation}
\end{corollary}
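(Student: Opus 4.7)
The idea is to apply Lemma \ref{lem:conv int} to transfer the stationary $\sup$-norm bound of Theorem \ref{theo:invdens} (for $p=1$) to the non-stationary regime, showing that the transfer cost is of smaller order than $\sqrt{\log T /T}$. For $y \in D$ and $z \in \R$, define
\[
g_T(y,z) \coloneqq \frac{1}{h_T}Q\Big(\frac{y-z}{h_T}\Big) - \rho_b(y), \qquad h_T = \frac{(\log T)^2}{\sqrt{T}},
\]
so that, with the $\diff s$-integral only ranging over $[0,T]$, one has $\hat\rho_T(y) - \rho_b(y) = \frac{1}{T}\int_0^T g_T(y,X_s)\diff s$. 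Hence
\[
\big\|\hat\rho_T - \rho_b\big\|_\infty = \sup_{y \in D}\Big\lvert \tfrac{1}{T}\int_0^T g_T(y,X_s)\diff s\Big\rvert,
\]
and we can apply Lemma \ref{lem:conv int} with the bounded function $g_T$.

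Since $Q$ has compact support and $\sup_{b \in \tilde{\bm\Sigma}_D(\beta+1)}\|\rho_b\|_\infty \leq \rho^*$ by \eqref{eq:rho_uniformly_bounded1}, the sup-norm satisfies $\|g_T\|_\infty \leq h_T^{-1}\|Q\|_\infty + \rho^* \lesssim \sqrt{T}/(\log T)^2$ uniformly in $b \in \tilde{\bm\Sigma}_D(\beta+1)$. Moreover, by the very definition of the class $\tilde{\bm\Sigma}_D(\beta+1)$ in \eqref{def:tildesigma}, every $b \in \tilde{\bm\Sigma}_D(\beta+1)$ yields a diffusion satisfying exponential ergodicity with constants $\kappa = \coa$, $c = \cob$, and Lyapunov function $V$ all independent of $b$. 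Hence Lemma \ref{lem:conv int} applies and yields, uniformly in $b\in\tilde{\bm\Sigma}_D(\beta+1)$,
\[
\Big\lvert \E^x_b\big[\|\hat\rho_T - \rho_b\|_\infty\big] - \E^{\mu_b}_b\big[\|\hat\rho_T - \rho_b\|_\infty\big] \Big\rvert \;\leq\; \|g_T\|_\infty \Big(\frac{2\log T}{\coa T} + \cob V(x)\frac{1}{T}\Big) \;\lesssim\; \frac{1}{\sqrt T \log T},
\]
for all $T$ large enough. Since $1/(\sqrt T\log T) = \mathsf{o}(\sqrt{\log T / T})$, combining this with the stationary bound from Theorem \ref{theo:invdens} with $p=1$ gives the desired non-stationary rate \eqref{eq: unif rate nonstation}.

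The only mildly delicate point is the growth of $\|g_T\|_\infty$, which diverges like $\sqrt{T}/(\log T)^2$ as the bandwidth shrinks; it must be balanced against the exponential ergodicity rate. The fortunate polynomial--logarithmic scaling ensures that the transfer cost is absorbed by the stationary rate, so no additional argument is required. Everything else in the proof -- uniformity in $b$, the role of $V(x)$, and the measurability of the supremum over $y \in D$ via restriction to $D \cap \Q$ as in the proof of Theorem \ref{theo:invdens} -- is routine.
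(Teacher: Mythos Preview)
Your proof is correct and follows essentially the same route as the paper: define the same $g_T$, bound $\|g_T\|_\infty \lesssim \sqrt{T}/(\log T)^2$, apply Lemma~\ref{lem:conv int} using the uniform exponential ergodicity constants built into $\tilde{\bm{\Sigma}}_D(\beta+1)$, and conclude by the triangle inequality with Theorem~\ref{theo:invdens}. The paper's argument is identical up to notation, so there is nothing to add.
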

\begin{proof}
Let 
$$g(x,y) \coloneqq \frac{\sqrt{T}}{(\log T)^2}Q\Big(\frac{\sqrt{T}(x-y)}{(\log T)^2} \Big) - \rho_b(x), \quad x,y \in \R.$$
Then, for  $T$ large enough such that $\sqrt{T}/(\log T)^2\lVert Q\rVert_\infty \geq \rho^\ast$, we have
$$\lVert g \rVert_\infty \leq 2 \sqrt{T}/(\log T)^2\lVert Q\rVert_\infty.$$
Applying Lemma \ref{lem:conv int} to $g$ and $\mathbf{X}$, which is exponentially ergodic by construction of $\tilde{\bm{\Sigma}}_D(\beta +1)$, we obtain, for any $x \in \R$ and $T$ large enough such that $T > \bm{\vartheta}^{-1}\log T$, that 
\begin{align*}
\left\vert \E^x_b\left[\left\|\hat\rho_T - \rho_b\right\|_{\infty}\right] - \E^{\mu_b}_b\left[\left\|\hat\rho_T - \rho_b\right\|_{\infty}\right] \right\vert \leq 2\lVert Q \rVert_\infty \Big(\frac{2}{\bm{\vartheta} \sqrt{T} \log T} + \frac{\bm{\varrho} V(x) }{\sqrt{T} (\log T)^2} \Big).
\end{align*}
Combining this with \eqref{eq: unif rate station}, we obtain \eqref{eq: unif rate nonstation} by triangle inequality.
\end{proof}

\subsection{Application}\label{subsec:control_diffusion}
We now turn to analyzing the singular control problem. 
Given the literature on related problems, it is natural to expect reflecting barrier strategies, i.e., strategies which maintain the process between two constant thresholds $\colow$ and $\coup$ and just grow at these points, to be optimal. We denote such strategies for upper and lower, resp., boundaries $\colow,\coup\in\R,\,\colow<\coup,$ by $\mathbf U^{\colow}$ and $\mathbf D^{\coup}$ and refer to \cite[\S 23]{gisk72} for an explicit characterization, which underlines the interpretation of these processes as local times at the boundaries $\xi$ and $\theta$, resp., of the process $\X^Z$.

\subsubsection{Solution for known drift}\label{sec:known_b}
Using classical ergodic results for one-dimensional linear diffusions, it is straightforward to show the following analytic expression for the expected costs when applying reflecting barrier strategies.
We refer to \cite[Proposition 2.1]{alvarez2018class} for a detailed proof.
\begin{lemma}
Let $\colow,\coup\in\R,\;\colow<\coup$ and $x\in [\colow,\coup]$. Then, for $\mathbf Z=(\mathbf U^\colow, \mathbf D^\coup)$,
\[\lim_{T\to\infty}\frac{1}{T}\E^x\left[\int_0^Tc(X^Z_s)\d s + q_uU_T^\colow+q_dD_T^\coup\right]=C(\colow,\coup),\]
with
\[C(\colow,\coup)=\frac{1}{\int_{\colow}^{\coup}m(x)\d x}\left(\int_{\colow}^{\coup}c(x)m(x)\d x+\frac{q_u}{S'(\colow)}+\frac{q_d}{S'(\coup)}\right),
\]
where $m$ denotes the speed density and $S$ the scale function of the underlying diffusion. 
\end{lemma}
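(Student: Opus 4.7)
This identity is the classical ergodic cost formula for a one-dimensional reflected diffusion on $[\xi,\theta]$ and a full proof is given in \cite[Prop.~2.1]{alvarez2018class}. The plan is to prove it via the Poisson (ergodic HJB) equation, which simultaneously accounts for the running cost and the two boundary local-time terms and makes the constant $C(\xi,\theta)$ appear naturally as the separation parameter.

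The first observation is that, under $b\in\bm{\Sigma}$, the reflected process $\mathbf{X}^Z$ on the compact interval $[\xi,\theta]$ is a positive recurrent Feller Markov process whose unique invariant probability measure has density proportional to the speed density $m$ (classical It\^o--McKean theory for one-dimensional diffusions with two reflecting boundaries). The ergodic theorem then takes care of the running-cost part $T^{-1}\int_0^T c(X^Z_s)\,\d s$, but tells us nothing about $q_u U_T + q_d D_T$; extracting the asymptotic growth of the two boundary local times and combining them with the running-cost term is the main obstacle, and is what motivates the auxiliary-function approach below.

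To that end, I would construct a $C^2$ potential $V$ on $[\xi,\theta]$ together with a constant $\lambda$ solving the ergodic equation $\mathcal{L}V = \lambda - c$ on $(\xi,\theta)$, with Neumann data $V'(\xi) = -q_u$ and $V'(\theta) = q_d$, where $\mathcal{L}=\tfrac12\sigma^2\partial_{xx} + b\partial_x$ is the generator of $\mathbf{X}$. Writing $\mathcal{L}$ in speed-scale form as $\mathcal{L}V = m^{-1}(V'/S')'$ and integrating the resulting ODE $(V'/S')'(x) = (\lambda - c(x))m(x)$ once from $\xi$ to $\theta$, the Neumann data force
\begin{equation*}
\lambda = \frac{1}{\int_\xi^\theta m(y)\,\d y}\left(\int_\xi^\theta c(y)m(y)\,\d y + \frac{q_u}{S'(\xi)} + \frac{q_d}{S'(\theta)}\right) = C(\xi,\theta),
\end{equation*}
and a compatible $V$ is then obtained by two explicit quadratures; regularity and boundedness of $V$ are immediate since $m$ and $S'$ are continuous and strictly positive on the compact interval $[\xi,\theta]$.

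With this $(V,\lambda)$ in hand, It\^o's formula applied to $V(X^Z_t)$, combined with the fact that $U$ (resp.\ $D$) is continuous and supported on $\{X^Z=\xi\}$ (resp.\ $\{X^Z=\theta\}$), yields
\begin{equation*}
V(X^Z_T) - V(x) = \lambda T - \int_0^T c(X^Z_s)\,\d s + M_T - q_u U_T - q_d D_T,
\end{equation*}
where $M_T \coloneqq \int_0^T V'(X^Z_s)\sigma(X^Z_s)\,\d W_s$ is a true martingale because the integrand $V'\sigma$ is bounded on the compact state space. Taking $\E^x$, rearranging, and dividing by $T$ gives
\begin{equation*}
\frac{1}{T}\E^x\Big[\int_0^T c(X^Z_s)\,\d s + q_u U_T + q_d D_T\Big] = \lambda - \frac{1}{T}\E^x[V(X^Z_T) - V(x)],
\end{equation*}
and since $V$ is bounded on $[\xi,\theta]$, the right-hand side converges to $\lambda = C(\xi,\theta)$ as $T \to \infty$, which is the claim.
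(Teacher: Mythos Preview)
Your proof is correct. The paper itself does not give a proof of this lemma; it simply states the result and refers to \cite[Proposition 2.1]{alvarez2018class} for the details, exactly as you note in your opening line. Your Poisson-equation (ergodic HJB) verification argument---solving $\mathcal{L}V=\lambda-c$ with Neumann data $V'(\xi)=-q_u$, $V'(\theta)=q_d$, reading off $\lambda=C(\xi,\theta)$ from the Fredholm compatibility condition, and then applying It\^o's formula to $V(\mathbf{X}^Z)$---is the standard route and exactly the kind of argument one finds in the cited reference. The only computation worth double-checking is the sign bookkeeping in the boundary terms: since $dX^Z_t$ contains $+\,dU_t - dD_t$ and $U$ (resp.\ $D$) increases only on $\{X^Z=\xi\}$ (resp.\ $\{X^Z=\theta\}$), one indeed gets $V'(\xi)\,dU_t - V'(\theta)\,dD_t = -q_u\,dU_t - q_d\,dD_t$, which matches your displayed identity.
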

For our later purposes, the main observation is that---given the volatility $\sigma$---the expected cost function $C$ can completely be described in terms of the invariant density $\rho$ of the underlying diffusion. Indeed:
\[C(\colow,\coup)=\frac{1}{\int_{\colow}^{\coup}\rho(x)\d x}\left(\int_{\colow}^{\coup}c(x)\rho(x)\d x+\frac{q_u\sigma^2(\colow)}{2}\rho(\colow)+\frac{q_d\sigma^2(\coup)}{2}\rho(\coup)\right).
\]
Therefore, minimizers of $C$ correspond to optimizers of \eqref{eq:problem} in the class of reflecting barrier strategies. 
The next natural question is whether such minimizers are indeed optimal within the class of all admissible strategies, i.e., whether the minimal value in \eqref{eq:problem} is equal to
\[\Val\coloneqq \min _{(\colow,\coup)}C(\colow,\coup).\]
This also holds under natural assumptions as can be proven, e.g., adapting the lines of argument in \cite{liang2020ergodic,christensen2020solution} to the two-sided case. We, however, do not go into detail here to not overburden our paper with technicalities, but restrict our attention to the class of reflecting barrier strategies in the following. 

As 0 is our target state, it is furthermore natural that 0 is contained in the no-action-region which is assumed to be bounded. 
More precisely, we assume that there exists $\Bd>0$ such that the minimizer $(\colow^*,\coup^*)$  of $C$ fulfill 
\[(\colow^*,\coup^*)\in K_{\Bd}\coloneqq \{(\colow,\coup):-\Bd\leq \colow\leq -1/\Bd,\,1/\Bd\leq \coup\leq \Bd\}.\]
In \cite{alvarez2018class}, a natural set of assumptions is introduced to guarantee that $(\colow^*,\coup^*)$ is characterized as the unique critical point of the function $C$.
We, however, do not need uniqueness for our purposes. 

\subsubsection{Construction of the estimators}\label{subsec:estimator}
We proceed by constructing estimators $\hat \colow_T$ and $\hat \coup_T$ for the optimal thresholds $\colow^*$ and $\coup^*$ which are based on the estimator $\hat\rho_T$ of the invariant density $\rho=\rho_b$ (see \eqref{def:rhohat}). To this end, we fix some $\beta>0$, set $D=K_B$, and write $\tilde{\bm{\Sigma}}\coloneqq\tilde{\bm{\Sigma}}_D(\beta+1)$.
In principle, we just use the plug-in estimator, taking however into account that (cf.~\eqref{eq:rho_uniformly_bounded2})
\[a\coloneqq \inf_{b\in\tilde{\bm{\Sigma}}}\min_{x\in K_{\Bd}}\rho_b(x)>0.\]
This leads to the estimator
\[
\estcost\coloneqq\frac{1}{\int_{\colow}^{\coup}\hat\rho_T(x)\vee a\d x}\left(\int_{\colow}^{\coup}c(x)\hat\rho_T(x)\d x+\frac{q_u\sigma^2(\colow)}{2}\hat\rho_T(\colow)+\frac{q_d\sigma^2(\coup)}{2}\hat\rho_T(\coup)\right)\]
for the expected value $C(\colow,\coup)$ of a reflection strategy with barriers $\colow,\coup$, yielding
\[\estbound \in\arg\min_{(\colow,\coup)\in K_{\Bd}}\estcost\]
as our estimator for the optimal thresholds.
Using this, we obtain that the expected costs, when using the strategy based on the estimator after having observed the uncontrolled process for $T$ time units, converge to the optimal value with rate $\sqrt{{\log T}/{T}}$.
\begin{proposition}\label{prop:nonparam_estimator}
	For any $x \in \R$, there exists $C_1>0$ such that 
	\[
		\sup_{b\in\tilde{\bm{\Sigma}}}\E^x_b\left[C\estbound-\Val \right]\leq C_1\sqrt{\frac{\log T}{T}}.
	\]
\end{proposition}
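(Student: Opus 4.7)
The plan is a standard argument that reduces the regret to a uniform deviation of the plug-in cost functional from the true cost functional over $K_\Bd$, which in turn is controlled by the $\sup$-norm risk of $\hat\rho_T$ furnished by Corollary \ref{coro:invdens}.

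First, using the defining property of $\estbound$ as a minimizer of $\estcost$ over $K_\Bd$, we obtain the basic oracle-type inequality
\begin{equation*}
C\estbound - \Val \;=\; C\estbound - C(\colow^*,\coup^*) \;\leq\; \bigl(C\estbound - \hat C_T\estbound\bigr) + \bigl(\hat C_T(\colow^*,\coup^*) - C(\colow^*,\coup^*)\bigr) \;\leq\; 2 \sup_{(\colow,\coup)\in K_\Bd} \bigl\lvert \estcost - C(\colow,\coup)\bigr\rvert.
\end{equation*}

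Next, I would write $C$ and $\hat C_T$ as ratios $N/D$ and $\hat N/\hat D$ with numerators
$N(\colow,\coup)=\int_\colow^\coup c(x)\rho_b(x)\diff x + \tfrac{q_u\sigma^2(\colow)}{2}\rho_b(\colow) + \tfrac{q_d\sigma^2(\coup)}{2}\rho_b(\coup)$
and analogous $\hat N$ built from $\hat\rho_T$, and denominators $D = \int_\colow^\coup\rho_b(x)\diff x$, $\hat D = \int_\colow^\coup (\hat\rho_T(x)\vee a)\diff x$. The elementary identity
\begin{equation*}
\estcost - C(\colow,\coup) \;=\; \frac{\hat N - N}{\hat D} \;+\; \frac{N(D-\hat D)}{D\hat D}
\end{equation*}
reduces the task to uniform bounds on $|\hat N - N|$ and $|\hat D - D|$ and to uniform lower bounds on $D$ and $\hat D$. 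All four ingredients are straightforward: on the compact set $K_\Bd$, $\sigma^2$ and $c$ are bounded, $\rho_b$ is uniformly upper-bounded by $\rho^*$ through \eqref{eq:rho_uniformly_bounded1} and lower-bounded by a constant $\rho_\ast$ through \eqref{eq:rho_uniformly_bounded2}, so that $N$ is uniformly bounded above and $D \geq 2\rho_\ast/\Bd$. Since $\hat\rho_T\vee a \geq a$ on $K_\Bd$, also $\hat D \geq 2a/\Bd$. Because $\rho_b \geq a$ on $K_\Bd$, one has $|\hat\rho_T(x)\vee a - \rho_b(x)| \leq |\hat\rho_T(x)-\rho_b(x)|$, hence $|\hat D - D| \leq |K_\Bd|\,\lVert \hat\rho_T - \rho_b\rVert_\infty$; a completely analogous estimate yields $|\hat N - N| \leq K\,\lVert \hat\rho_T - \rho_b\rVert_\infty$ for some constant $K$ depending only on $\tilde{\bm{\Sigma}}$, $\sigma$, $c$, $q_u$, $q_d$ and $\Bd$. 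Assembling these bounds gives
\begin{equation*}
\sup_{(\colow,\coup)\in K_\Bd} \bigl\lvert \estcost - C(\colow,\coup)\bigr\rvert \;\leq\; C_0\,\lVert \hat\rho_T - \rho_b\rVert_\infty
\end{equation*}
with $C_0$ independent of $b\in\tilde{\bm{\Sigma}}$.

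Finally, taking $\E^x_b$ on both sides of the resulting inequality $C\estbound - \Val \leq 2C_0 \lVert\hat\rho_T - \rho_b\rVert_\infty$ and invoking the non-stationary sup-norm rate from Corollary \ref{coro:invdens}, which provides $\sup_{b\in\tilde{\bm{\Sigma}}}\E^x_b[\lVert\hat\rho_T-\rho_b\rVert_\infty] \in \mathsf{O}(\sqrt{\log T/T})$, yields the claimed bound. No step presents a real obstacle; the only point that requires slight care is the simultaneous handling of the truncation $\hat\rho_T\vee a$, both to ensure a deterministic lower bound on $\hat D$ and to keep the denominator error controlled by $\lVert\hat\rho_T - \rho_b\rVert_\infty$ — which is exactly what the truncation level $a\leq \inf_{b\in\tilde{\bm\Sigma}}\min_{K_\Bd}\rho_b$ guarantees.
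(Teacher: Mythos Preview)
Your proof is correct and follows essentially the same approach as the paper's: the oracle inequality reducing $C\estbound-\Val$ to $2\sup_{K_\Bd}|\hat C_T-C|$, a ratio decomposition of the cost functional, uniform bounds on numerator/denominator differences via $\lVert\hat\rho_T-\rho_b\rVert_\infty$, and the final appeal to Corollary~\ref{coro:invdens}. The only cosmetic difference is which term carries which denominator in the ratio identity (you use $(\hat N-N)/\hat D + N(D-\hat D)/(D\hat D)$, the paper uses $(A_\rho-A_{\hat\rho_T})/B_\rho + A_{\hat\rho_T}(1/B_\rho-1/B_{\hat\rho_T})$); your variant is arguably slightly cleaner since $N$ is deterministic and uniformly bounded, whereas bounding $A_{\hat\rho_T}$ requires an extra word.
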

\begin{proof}
It holds that
\begin{align*}
	C\estbound-\Val &=C\estbound-\hat{C}_T\estbound+\hat{C}_T\estbound-\min_{(\colow,\coup)\in K_{\Bd}}C(\colow,\coup)\\
	&=C\estbound-\hat{C}_T\estbound+\min_{(\colow,\coup)\in K_{\Bd}}\estcost-\min_{(\colow,\coup)\in K_{\Bd}}C(\colow,\coup)\\
	&\leq 2\max_{(\colow,\coup)\in K_{\Bd}}\left|C(\colow,\coup)-\estcost\right|.
\end{align*}
To analyze this quantity, we denote numerator and denominator of $C$ and $\hat{C}_T$ by $A_\rho,B_\rho$ and $A_{\hat\rho_T},B_{\hat\rho_T}$, resp., and obtain for all $(\colow,\coup)\in K_{\Bd}$
\begin{align*}
	\left|C(\colow,\coup)-\estcost\right|&\leq \left|\frac{A_\rho(\colow,\coup)-A_{\hat\rho_T}(\colow,\coup)}{B_\rho(\colow,\coup)}\right|+\left|\frac{A_{\hat\rho_T}(\colow,\coup)}{B_\rho(\colow,\coup)}-\frac{A_{\hat\rho_T}(\colow,\coup)}{B_{\hat\rho_T}(\colow,\coup)}\right|\\
	&\leq \frac{B}{2a}\left|{A_\rho(\colow,\coup)-A_{\hat\rho_T}(\colow,\coup)}\right|+|A_{\hat\rho_T}(\colow,\coup)|\left|\frac{1}{B_\rho(\colow,\coup)}-\frac{1}{B_{\hat\rho_T}(\colow,\coup)}\right|
\end{align*}
Now, \eqref{eq:rho_uniformly_bounded1} yields that we find an absolute constant $\mathfrak M$ such that
\[\sup_{b\in\tilde{\bm{\Sigma}}}\E^x_b\left[C\estbound-\Val\right]\leq \mathfrak M\sup_{b\in\tilde{\bm{\Sigma}}}\E^0_b\left[\left\|\hat\rho_T - \rho_b\right\|_\infty\right],\]
{proving the claim by Corollary \ref{coro:invdens}.}
\end{proof}

\subsubsection{Data-driven singular controls}\label{sec:data_driven}
In most real world applications, the decision maker is faced with the problem of collecting data about the underlying dynamics and finding the optimal strategy at the same time. Here, however, a classical trade-off between exploration and exploitation occurs. On the one hand, the decision maker wants to minimize her expected costs and therefore uses singular control strategies with an optimal estimated threshold. On the other hand, using such a greedy strategy all the time, the decision maker can't learn about the drift $b$ of the underlying process outside the estimated control interval and therefore this procedure cannot even be expected to converge. 

Our solution is to separate exploration and exploitation periods as follows (see Figure \ref{fig:plot_expl_exploit}): At the beginning of every period except the first, the process is in the target state 0.
In the exploration periods, we then let the process run uncontrolled and the period ends when the process again reaches 0 after having visited two predefined boundaries $\colow_0,\coup_0,\,\colow_0<0<\coup_0$. 

In the exploitation periods, we use an estimator for $\rho$ as defined in the previous section in order to choose suitable thresholds based on the observations. The exact specification for this estimator $\estbound$ is given below. An exploitation period ends after the process has been reflected at both the upper and lower estimated boundary and has returned to 0. In the following, we will always set $\colow_0 = -B = -\coup_0$.

We combine exploration and exploitation periods using a (deterministic) sequence $(c_n)_{n \in \N} \in \{0,1\}^{\N}$, where $c_n=0$ (and $c_n=1$) means that the $n$-th period is of exploration-type (and exploitation-type, resp.) and denote the corresponding strategy by $\hat{\mathbf Z} = (\hat{\mathbf{U}},\hat{\mathbf{D}})$. By $\tau_0=0<\tau_1<\tau_2<\ldots$ we denote the stopping times separating the periods defining $\hat{\mathbf Z}$.
The question now is how to balance the time spent for exploration and exploitation. 
A suitable choice can be made by taking into account the estimation error bounds from the previous section and balancing the errors from misspecifying $\estbound$ due to the estimation error and the losses due to the lack of control in the exploration periods. As we will see below, a suitable choice are sequences $(c_n)_{n \in \N}$ such that
 there exists $\mathfrak{d} > 0$ with 
 \begin{align}\label{eq:c-condition}
	n^{2/3} \leq \# \{i \leq n: c_i = 0\} \leq n^{2/3} + \mathfrak{d}.
\end{align}
Observe that for such a sequence there exists $\overbar{M} > 0$ such that 
\begin{align}\label{eq:c-condition1}
\# \{i \leq n: c_i = 0\} \leq \overbar{M}n^{2/3}.
\end{align}

Note that $\mathbf W$ is a Brownian motion for the filtration generated by $\mathbf W$ and the independent random variable $X_0$. With respect to this filtration, the times separating the different periods are stopping times.
Therefore, the process $\mathbf{\tilde W}$ which is constructed by putting together the paths of $\mathbf W$ in the exploration periods, is again a Brownian motion. As the process $\mathbf{\tilde X}$ which is constructed by joining the paths of $\X$ in the exploration periods fulfills $\tilde X_0=X_0$ and solves the SDE
\begin{equation*}
	\d \tilde X_s\ =\ b(\tilde X_s)\d s+\sigma(\tilde X_s)\d \tilde W_s, \quad s \geq 0,
\end{equation*}
it has the same dynamics as the uncontrolled process $\X$. 

We denote the estimator for the optimal threshold from Section \ref{subsec:estimator} for the uncontrolled process $\mathbf{\tilde X}$ until time $s$ by $\estboundcontrol$ and define
$\estbound \coloneqq (\tilde{\colow}_{S_T\wedge mT^{2/3}},\tilde{\coup}_{S_T\wedge mT^{2/3}}),$
where $S_T$ denotes the time that the controlled process $\mathbf{X}^{\hat{\mathbf Z}}$ has spent in the exploration periods until $T$, and $m$ is a constant specified in the following lemma. In other words, we base the estimator $\estbound$ for the threshold used in the exploitation periods just on the observations in the exploration periods and, in addition, just for technical reasons, ignore all observations after time $s=mT^{2/3}$.

%

\begin{figure}[h]
	\centering
	\includegraphics[scale=0.5]{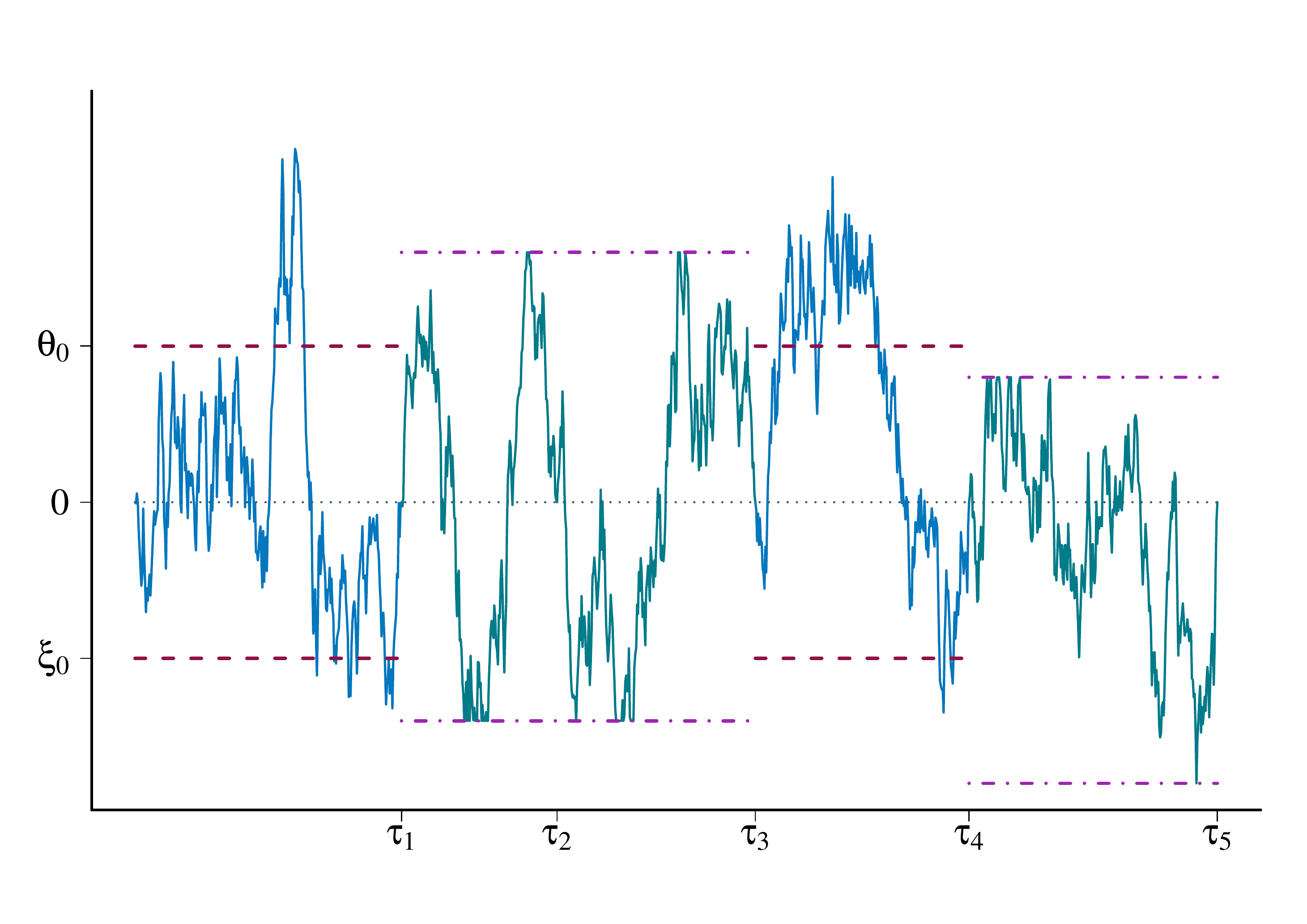}
	\caption{A path controlled using a data-driven reflections strategy with exploration (blue) and exploitation (turquoise) periods using $(c_n)_n=(0,1,1,0,1,\dots)$. The predefined boundaries $\xi_0,\theta_0$ determining the length of the exploration periods are represented by red lines and the estimated optimal reflection boundaries by purple lines.}
	\label{fig:plot_expl_exploit}
\end{figure}


We first observe that condition \eqref{eq:c-condition} implies that the time $S_T$ spent in the exploration periods until time $T$ is of order $T^{2/3}$. In particular, $S_T\to\infty$ and ${S_T}/{T}\to0$.
More precisely:
\begin{lemma}\label{lem:strategy}
Let $(c_n)_{n \in \N} \in \{0,1\}^{\N}$ satisfy \eqref{eq:c-condition}	with corresponding data-driven strategy $\hat{\mathbf Z}$ as specified above.
Then, there exist $m,M > 0$ such that 
$$\PP^0_b(T^{-2/3}S_T \leq m)\lesssim T^{-1/3} \text{ and } 
\limsup_{T \to \infty} T^{-2/3}\E^0_b[N^0_T] \leq M,$$
where $N^0_T$ denotes the number of exploration periods until time $T$.
\end{lemma}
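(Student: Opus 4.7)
The plan hinges on the regenerative structure of the strategy at $0$: at each $\tau_{n-1}$ the controlled process is at $0$ and, conditional on $\mathcal{F}_{\tau_{n-1}}$, the length $\Delta_n \coloneqq \tau_n-\tau_{n-1}$ is the first return time to $0$ after visiting the fixed boundaries $\xi_0,\theta_0$ (if $c_n=0$) or the estimated boundaries $(\tilde\xi_{\tau_{n-1}},\tilde\theta_{\tau_{n-1}})\in K_B$ (if $c_n=1$). Because $b\in\tilde{\bm{\Sigma}}$ yields uniform exponential ergodicity and all barriers are confined to compact subsets of $\R\setminus\{0\}$, standard exit-time estimates provide uniform constants $0<\underline\lambda\leq\bar\lambda$ and $M_2<\infty$ with
\[\underline\lambda \;\leq\; \E^0_b[\Delta_n\mid\mathcal{F}_{\tau_{n-1}}] \;\leq\; \bar\lambda, \qquad \E^0_b[\Delta_n^2\mid\mathcal{F}_{\tau_{n-1}}] \;\leq\; M_2,\]
while the exploration-period lengths $(\sigma_i)_{i\geq1}$ form an i.i.d.\ sequence with mean $\mu_{\mathrm{expl}}>0$ and finite variance.

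For the expectation bound, the process $(\tau_n-\underline\lambda n)_{n\geq0}$ is a submartingale with respect to $(\mathcal{F}_{\tau_n})$. Truncated optional stopping at $N_T+1$, together with $\tau_{N_T+1}\leq T+\Delta_{N_T+1}$ and $\E^0_b[\Delta_{N_T+1}]\leq\bar\lambda$, yields $\E^0_b[N_T]\leq(T+\bar\lambda)/\underline\lambda$. Combining with the combinatorial bound \eqref{eq:c-condition1}, which gives $N^0_T\leq \overbar{M}\,N_T^{2/3}$ whenever $N_T\geq1$, and applying Jensen's inequality yields
\[\E^0_b[N^0_T] \;\leq\; \overbar{M}\,\bigl(\E^0_b[N_T]\bigr)^{2/3} \;\leq\; \overbar{M}\,\bigl((T+\bar\lambda)/\underline\lambda\bigr)^{2/3},\]
from which the claimed limsup bound follows with $M=\overbar{M}\,\underline\lambda^{-2/3}$.

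For the probability bound, fix $\alpha\in(0,\bar\lambda^{-2/3})$ and $m\in(0,\alpha\mu_{\mathrm{expl}}/2)$ and set $k_T\coloneqq\lceil\alpha T^{2/3}\rceil$. Since $S_T\geq\sum_{i=1}^{k_T}\sigma_i$ on $\{N^0_T\geq k_T\}$, one has
\[\PP^0_b\bigl(S_T\leq mT^{2/3}\bigr) \;\leq\; \PP^0_b(N^0_T<k_T) + \PP^0_b\Bigl(\sum_{i=1}^{k_T}\sigma_i\leq mT^{2/3}\Bigr).\]
The i.i.d.\ sum has mean bounded below by $\alpha\mu_{\mathrm{expl}}T^{2/3}$ and variance of order $T^{2/3}$, so Chebyshev's inequality makes the second summand of order $T^{-2/3}$. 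For the first, the lower bound $N^0_T\geq N_T^{2/3}$ from \eqref{eq:c-condition} gives $\{N^0_T<k_T\}\subseteq\{\tau_{\lceil k_T^{3/2}\rceil}>T\}$; writing the Doob decomposition $\tau_n=A_n+M_n$ with predictable compensator $A_n\leq n\bar\lambda$ and martingale part satisfying $\E[M_n^2]\leq nM_2$, the inclusion $\{\tau_n>T\}\subseteq\{M_n>T-n\bar\lambda\}$ together with $\alpha^{3/2}\bar\lambda<1$ and Chebyshev on $M_n$ at the level $(1-\alpha^{3/2}\bar\lambda)T\asymp T$ produces a bound of order $T^{-1}$. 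Summing both contributions gives $\PP^0_b(S_T\leq mT^{2/3})\lesssim T^{-2/3}$, which in particular implies the claim.

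The main technical obstacle is the rigorous verification of the uniform moment bounds on $\Delta_n$: both exploration and exploitation lengths reduce to exit and return times of the uncontrolled diffusion for compact sets bounded away from $0$, and these admit exponential moments under the exponential ergodicity encoded in $\tilde{\bm{\Sigma}}$. The only subtlety is that for exploitation periods the barriers $(\tilde\xi_{\tau_{n-1}},\tilde\theta_{\tau_{n-1}})$ are random; uniformity is obtained via compactness of $K_B$ and the uniform ergodicity estimates from Proposition \ref{lem:rfunction}.
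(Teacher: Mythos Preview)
Your argument is correct and follows a genuinely different route from the paper's. The paper proceeds by stochastic domination: it constructs i.i.d.\ upper and lower bounds $\underbars\eta,\overbar\eta$ for the period lengths, couples $N_T$ between the two associated renewal processes, and invokes the elementary renewal theorem for the expectation bound. For the probability bound it introduces an auxiliary \emph{uncontrolled} shadow process, shows $\breve S_T\leq_{\mathrm{st}}S_T$, and then combines Markov's inequality with Wald's second moment identity and a separate lemma on monotonicity of $\alpha\mapsto\mathrm{Var}(X^\alpha)$ to control $\mathrm{Var}_{\PP^0_b}(\breve N^0_T)$. You instead work directly with the discrete filtration $(\mathcal F_{\tau_n})$: a submartingale optional-stopping argument for $(\tau_n-\underline\lambda n)$ gives $\E^0_b[N_T]$, and a Doob decomposition of $(\tau_n)$ together with Chebyshev handles the tail $\PP^0_b(N_T<k_T^{3/2})$. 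Your route avoids the auxiliary coupling construction and the variance-of-powers lemma entirely, and in fact yields the sharper bound $\PP^0_b(S_T\le mT^{2/3})\lesssim T^{-2/3}$, more than enough for the stated $T^{-1/3}$. Both approaches ultimately rest on the same moment input---uniform first and second conditional moments of the period lengths over $K_B$---which the paper packages as stochastic bounds $\underbars\eta^1\leq_{\mathrm{st}}\eta^1_n\leq_{\mathrm{st}}\overbar\eta^1$ (cycle lengths for reflection at $\pm1/B$ resp.\ $\pm B$), while you phrase them as pointwise conditional bounds $\underline\lambda\leq\E^0_b[\Delta_n\mid\mathcal F_{\tau_{n-1}}]\leq\bar\lambda$ and $\E^0_b[\Delta_n^2\mid\mathcal F_{\tau_{n-1}}]\leq M_2$; neither text gives a full verification of this step, and your closing remark correctly identifies it as the point requiring care.
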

The proof, which is quite technical and based on renewal theoretic arguments, is deferred to Appendix \ref{sec:proof strategy}. The main result of this section given below shows that, by employing the strategy $\hat{\mathbf{Z}}$, we can guarantee that the expected regret per time unit vanishes with rate $\sqrt{\log T}/T^{1/3}$.

\begin{theorem}\label{thm:main_}
Let $(c_n)_{n \in \N} \in \{0,1\}^{\N}$ satisfy \eqref{eq:c-condition} with corresponding data-driven strategy $\hat{\mathbf Z}$ as specified above.
Then, the expected regret per time unit is of order $\mathsf{O}\bigg(\frac{\sqrt{\log T}}{T^{1/3}}\bigg)$. 
That is, for any $b \in \tilde{\bm{\Sigma}}$, we have
\[\limsup_{T\to\infty}\frac{T^{1/3}}{\sqrt{\log T}}\left(\frac{1}{T} \E^0_b\left[\int_0^Tc(X^{\hat{Z}}_s)\d s + q_u\hat{U}_T+q_d\hat{D}_T\right]-\Val_b\right)< \infty.\]
\end{theorem}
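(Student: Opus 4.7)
The plan is to exploit the regenerative structure of $\mathbf{X}^{\hat Z}$---every period $[\tau_{n-1},\tau_n]$ starts and ends at the target state $0$---and to decompose the expected cost along cycles, separately for exploration ($c_n=0$) and exploitation ($c_n=1$). Writing $\zeta_n\coloneqq\tau_n-\tau_{n-1}$ and letting $R_n$ denote the total running-plus-reflection cost accumulated on the $n$-th period, I would express
\[
\E^0_b\!\left[\int_0^T c(X^{\hat Z}_s)\d s+q_u\hat U_T+q_d\hat D_T\right]-\Val\,T
=\sum_{n:\,c_n=0,\,n\le N_T}\E^0_b[R_n-\Val\zeta_n]+\sum_{n:\,c_n=1,\,n\le N_T}\E^0_b[R_n-\Val\zeta_n]+\mathsf{O}(1),
\]
where the $\mathsf{O}(1)$ collects the residual from the possibly incomplete final period. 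Exploration cycles are i.i.d.\ excursions of the uncontrolled diffusion from $0$ back to $0$ after visiting $\colow_0$ and $\coup_0$, with finite mean cost $\alpha_0$ and mean length $\ell_0$. Hence the first sum equals $(\alpha_0-\Val\ell_0)\E^0_b[N^0_T]=\mathsf{O}(T^{2/3})$ by Lemma \ref{lem:strategy}, contributing $\mathsf{O}(T^{-1/3})$ per unit time to the average regret.

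For each exploitation period, the strong Markov property at $\tau_{n-1}$ combined with the renewal--reward identity for a reflected diffusion started and run from $0$ to $0$ (after reflecting at both boundaries) yields, conditional on the thresholds $(\tilde\colow_{s_n},\tilde\coup_{s_n})$ in force on $[\tau_{n-1},\tau_n]$,
\[
\E^0_b\bigl[R_n-\Val\zeta_n\bigm|\mathcal F_{\tau_{n-1}}\bigr]
=\bigl(C(\tilde\colow_{s_n},\tilde\coup_{s_n})-\Val\bigr)\ell_b(\tilde\colow_{s_n},\tilde\coup_{s_n})
\le\ell_{\max}\bigl(C(\tilde\colow_{s_n},\tilde\coup_{s_n})-\Val\bigr),
\]
with $\ell_{\max}\coloneqq\sup_{b\in\tilde{\bm\Sigma},(\colow,\coup)\in K_\Bd}\ell_b(\colow,\coup)<\infty$ thanks to the compactness of $K_\Bd$. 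Since $\widetilde{\mathbf X}$ shares the law of the uncontrolled diffusion under $\PP^0_b$ and the thresholds $(\tilde\colow_{s_n},\tilde\coup_{s_n})$ are $\mathcal F_{\tau_{n-1}}$-measurable, Corollary \ref{coro:invdens} together with the plug-in bound of Proposition \ref{prop:nonparam_estimator}---applied path-wise to $\widetilde{\mathbf X}$ conditionally on the exploration sigma-algebra, with respect to which $s_n=\tilde S_n\wedge mT^{2/3}$ is measurable---gives $\E^0_b[C(\tilde\colow_{s_n},\tilde\coup_{s_n})-\Val\mid s_n]=\mathsf{O}(\sqrt{\log s_n/s_n})$.

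On the event $\{S_T\ge mT^{2/3}\}$, which by Lemma \ref{lem:strategy} has probability at least $1-\mathsf{O}(T^{-1/3})$, the bulk of exploitation periods use estimators trained on $\asymp T^{2/3}$ exploration time, so each carries an individual error $\mathsf{O}(\sqrt{\log T}/T^{1/3})$. Summing over at most $\E^0_b[N^1_T]\le T/\ell_{\min}$ exploitation periods---where $\ell_{\min}>0$ is a uniform lower bound on expected cycle lengths on $K_\Bd$---the exploitation regret is $\mathsf{O}(T^{2/3}\sqrt{\log T})$; the complementary event, of probability $\mathsf{O}(T^{-1/3})$, is absorbed by Cauchy--Schwarz against a polynomial moment bound on the total cost on $[0,T]$, contributing strictly lower order. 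Dividing the accumulated regret by $T$ produces the claimed $\mathsf{O}(\sqrt{\log T}/T^{1/3})$ rate. The main obstacle is making rigorous the transfer of Proposition \ref{prop:nonparam_estimator} from its deterministic horizon to the random stopping time $s_n$; this calls for a careful conditioning argument that exploits the measurability of $s_n$ and $(\tilde\colow_{s_n},\tilde\coup_{s_n})$ with respect to the exploration sigma-algebra, ensuring the nonparametric rate is preserved up to the logarithmic factor inherited from Proposition \ref{prop:nonparam_estimator}.
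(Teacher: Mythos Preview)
Your regenerative decomposition and the treatment of exploration cycles are correct and match the paper. The gap is exactly the one you flag at the end as ``the main obstacle'': you never actually resolve how to apply Proposition~\ref{prop:nonparam_estimator} at a random data horizon $s_n$, and your workaround---conditioning on the exploration $\sigma$-algebra and invoking the rate $\sqrt{\log s_n/s_n}$ pathwise---is not justified.

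The paper avoids this difficulty entirely, and the mechanism is built into the very definition of the estimator. The truncation is $S_t\wedge m t^{2/3}$ with the \emph{running} time $t$, not a fixed cap $mT^{2/3}$ as you wrote. On the event $\{S_t\ge m t^{2/3}\}$, which by Lemma~\ref{lem:strategy} has probability $1-\mathsf O(t^{-1/3})$ for each fixed $t$, one has exactly $(\hat\colow_t,\hat\coup_t)=(\tilde\colow_{m t^{2/3}},\tilde\coup_{m t^{2/3}})$ with a \emph{deterministic} argument $m t^{2/3}$. Proposition~\ref{prop:nonparam_estimator} then applies verbatim to give $\E^0_b[C(\tilde\colow_{m t^{2/3}},\tilde\coup_{m t^{2/3}})]-\Val\le c\sqrt{\log(m t^{2/3})/(m t^{2/3})}$, and the complementary event contributes at most $\max_{K_\Bd}C\cdot\mathsf O(t^{-1/3})$. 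No conditioning on a random time is needed at all.

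The second issue is your claim that ``the bulk of exploitation periods use estimators trained on $\asymp T^{2/3}$ exploration time''. The event $\{S_T\ge mT^{2/3}\}$ only controls the final horizon; it says nothing about early cycles, which are trained on far less data. The paper handles this by first converting the sum over exploitation cycles into an integral $\int_0^T\E^0_b[C(\hat\colow_t,\hat\coup_t)]\d t$ (using that the per-cycle expected cost equals $C(\hat\colow_{\tau_n},\hat\coup_{\tau_n})$ times the expected cycle length, conditional on $\mathcal F^{\hat Z}_{\tau_n}$), and then integrating the pointwise bound $\Val+c\,\sqrt{\log t}/t^{1/3}$ over $[0,T]$ to obtain $\Val\,T+\mathsf O(T^{2/3}\sqrt{\log T})$. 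Your cycle-by-cycle summation with a uniform $\sqrt{\log T}/T^{1/3}$ error is not supported.
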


\begin{proof}
We first consider the costs in the exploration periods. Using \cite[Chapter VI, Theorem 1.2]{MR1978607}, we first see that in one exploration cycle starting and ending in 0, the expected costs are 
\[\E^0_b[\tau_1]\int c(x)\rho_b(x)\d x,\]
with finiteness of (arbitrary) moments of $\tau_1$ under $\PP^0_b$ being demonstrated in Appendix \ref{sec:proof strategy}. Hence, the expected costs per time unit in full exploration cycles are $\int c(x)\rho_b(x)\d x$\ and the time spent in such cycles until $T$ is bounded by $S_T$. If we consider the cumulative costs until time $T$, we have to take into account 
that the last exploration cycle may be cut off at the deterministic time $T$. Putting pieces together, we can bound the expected costs in the exploration period as follows:
\begin{align*}
\E^0_b\left[\int_0^Tc(X^{\hat{Z}}_t)\d S_t\right]&\leq \E^0_b\left[\sum_{\substack{n:\tau_n\leq T\\
\text{exploration period}}}\int_{\tau_n}^{\tau_{n+1}}c(X^{\hat{Z}}_t)\d t\right] \\
&= \sum_{n \in \N_0} \E^0_b\Big[\E^0_b\Big[\int_{\tau_n}^{\tau_{n+1}} c(X_t^{\hat{Z}}) \diff{t}\Big\vert \cF^{\hat{Z}}_{\tau_n}\Big] \one_{\{\tau_n \leq T, \text{ exploration starts at } \tau_n\}} \Big] \\
&= \E^0_b[N^0_T ] \E^0_b[\tau_1] \int c(x) \rho_b(x) \diff{x}\\
&\lesssim T^{2/3} ,
\end{align*}
where we applied Lemma \ref{lem:strategy} with $N^0_T$ denoting the number of exploration periods until time $T$ and $(\mathcal{F}_t^{\hat{Z}})_{t \geq 0}$ is the filtration generated by the controlled process $\mathbf{X}^{\hat{\mathbf Z}}$.
To analyze the costs in the exploitation periods, we write $R_t\coloneqq t-S_t$ for the time spent in the exploitation periods and\textemdash again using \cite[Chapter VI, Theorem 1.2]{MR1978607}\textemdash  similarly get
\begin{align*}
\E^0_b&\left[\int_0^Tc(X^{\hat{Z}}_t)\d R_t+q_u\hat{U}_T+q_d\hat{D}_T\right]\\
&\leq \E^0_b\left[\sum_{\substack{n:\tau_n\leq T\\
		\text{exploitation period}}}\left(\int_{\tau_n}^{\tau_{n+1}}c(X^{\hat{Z}}_t)\d t+q_u(\hat{U}_{\tau_{n+1}}-\hat{U}_{\tau_{n}})+q_d(\hat{D}_{\tau_{n+1}}-\hat{D}_{\tau_{n}})\right)\right]\\
	&\leq \sum_{n \in \N_0} \E^0_b\Big[\E^0_b\Big[\int_{\tau_n}^{\tau_{n+1}} c(X_t^{\hat{Z}}) \diff{t}+q_u(\hat{U}_{\tau_{n+1}}-\hat{U}_{\tau_{n}})+q_d(\hat{D}_{\tau_{n+1}}-\hat{D}_{\tau_{n}})\Big\vert \cF^{\hat{Z}}_{\tau_n}\Big] \one_{\{\tau_n \leq T, \text{ exploit.\ starts at } \tau_n\}} \Big] \\
&\leq \E^0_b\left[\sum_{n \in \N_0} C(\hat{\colow}_{\tau_n},\hat{\coup}_{\tau_n})\E^0_b[\tau_{n+1}-\tau_n\vert \cF^{\hat{Z}}_{\tau_n} ]\one_{\{\tau_n \leq T, \text{ exploitation starts at } \tau_n\}} \right]\\
&= \E^0_b\left[\sum_{n \in \N_0} C(\hat{\colow}_{\tau_n},\hat{\coup}_{\tau_n})(\tau_{n+1}-\tau_n)\one_{\{\tau_n \leq T, \text{ exploitation starts at } \tau_n\}} \right]\\
&= \E^0_b\left[\sum_{n \in \N_0} \int_{\tau_n}^{\tau_{n+1}}C(\hat{\colow}_t,\hat{\coup}_t)\d t \one_{\{\tau_n \leq T, \text{ exploitation starts at } \tau_n\}} \right]\\
&\leq \int_0^T\E^0_b[C(\hat{\colow}_t,\hat{\coup}_t)]\d t+\max_{(\colow,\coup)\in K_{\Bd}}C (\colow,\coup)\E^0_b[\overbar{\eta}{}^1],
\end{align*}
where $\overbar{\eta}{}^1$ denotes the length of an exploitation period with maximal length (i.e., a period with reflection in $\pm B$).
On the event $\{S_t\geq mt^{2/3}\}$, we have that $(\hat{\colow}_t,\hat{\coup}_t)=(\tilde{\colow}_{mt^{2/3}},\tilde{\coup}_{mt^{2/3}})$, so that by Lemma \ref{lem:strategy} and Proposition \ref{prop:nonparam_estimator}, we have
\begin{align*}
	\E^0_b[C(\hat{\colow}_t,\hat{\coup}_t)]&\leq \max_{(\colow,\coup)\in K_{\Bd}}C (\colow,\coup)\PP^0_b(S_t< mt^{2/3})+\E^0_b[C(\hat{\colow}_t,\hat{\coup}_t)\one_{\{S_t\geq mt^{2/3}\}}]\\
	&\leq c_1t^{-1/3}+\E^0_b[C(\tilde{\colow}_{mt^{2/3}},\tilde{\coup}_{mt^{2/3}})]\\
	&\leq c_1t^{-1/3}+\Val_b+c_2\sqrt{\frac{\log (mt^{2/3})}{mt^{2/3}}}\\
	&\leq \Val_b+c_3 \frac{\sqrt{\log t}}{t^{1/3}}
\end{align*}
for certain constants $c_1,c_2,c_3$, hence
\begin{align*}
	\E^0_b\left[\int_0^Tc(X^{\hat{Z}}_t)\d R_t+q_u\hat{U}_T+q_d\hat{D}_T\right]
&\leq \int_0^T\E^0_b[C(\hat{\colow}_t,\hat{\coup}_t)]\d t+\max_{(\colow,\coup)\in K_{\Bd}}C (\colow,\coup)\E^0_b[\overbar{\eta}{}^1]\\
&\leq \Val_bT+c_4\int_0^T\frac{\sqrt{\log t}}{t^{1/3}}\d t\\
&\leq \Val_bT+c_4{\sqrt{\log(T)}}\int_0^T{t^{-1/3}}\d t\\
&\leq \Val_bT+c_5T^{2/3}{\sqrt{\log(T)}}
\end{align*}
for certain constants $c_4,c_5$.
Putting pieces together , we obtain 
\begin{align*}
&\frac{1}{T} \E^0_b\left[\int_0^Tc(X^{\hat{Z}}_s)\d s + q_u\hat{U}_T+q_d\hat{D}_T\right]-\Val_b\\
&=\frac{1}{T}\E^0_b\left[\int_0^Tc(X^{\hat{Z}}_t)\d S_t\right]+\frac{1}{T}\E^0_b\left[\int_0^Tc(X^{\hat{Z}}_t)\d R_t+q_u\hat{U}_T+q_d\hat{D}_T\right]-\Val_b\\
&\lesssim T^{-1/3}+\frac{\sqrt{\log(T)}}{T^{1/3}}.
\end{align*}
\end{proof}


\section{Data-driven controls for L\'evy processes} \label{sec:levy}
We now turn our attention to another class of non-continuous control problems. 
The first main difference is that we consider a one-sided class of problems, that is, we just consider downward controls. 
Second, we assume the underlying processes to have jumps. 
More precisely, as our driving process, we take a L\'evy process $\mathbf{X} = (X_t)_{t \geq 0}$, started in $x \in \R$ under $\PP^x$, satisfying the basic assumption
\begin{enumerate}[leftmargin=*,label=($\mathscr{H}0$),ref=($\mathscr{H}0$)] 
	\item $\mathbf{X}$ is upward regular, i.e., $$\PP^0(\inf\{t \geq 0: X_t > 0\} = 0) = 1,$$ and moreover $0 < \E^0[X_1] \eqqcolon \eta < \infty$. \label{ass: upward}
\end{enumerate}
Let us note that any Lévy process with unbounded variation (i.e., either $\X$ has a non-trivial Gaussian part or $\int_{-1}^1 \lvert x \rvert \, \Pi(\diff{x}) = \infty$) satisfies the upward regularity assumption. For a full description of upward regularity in terms of necessary and sufficient conditions, also covering a subset of Lévy processes with bounded variation, see \cite[Theorem 6.5]{kyprianou2014}.

Control problems with underlying jump processes are known to be much harder to analyze than their counterparts without jumps, see \cite{oksendal2019} for discussions and many examples. To formulate our problem, we fix a non-decreasing function $\gamma \in \mathcal{C}^2(\R)$. In contrast to the singular controls discussed in Section \ref{subsec:control_diffusion}, we now consider controls of impulse-type. These are sequences $S=\left(\tau_n,\zeta_n\right)_{n \in \N}$ of stopping times $\tau_1<\tau_2<\ldots \nearrow\infty$ and $\cF_{\tau_n}$-measurable random variables $\zeta_n$ describing the times of the interventions and the state after exercising the control, respectively.
The corresponding controlled process is given as 
\[X^S_t=X_t-\sum\limits_{n\in \N : \tau_n \leq t}(X^S_{\tau_n,-}-\zeta_n), \quad t \geq 0.\]
Here, the value at time $\tau_n$, but with the control not having taken place yet, is denoted by
$$X^S_{\tau_n,-}=X_{\tau_n}-\sum\limits_{m \in \N: m<n}(X^S_{\tau_m-}-\zeta_m).$$
In general, for processes with jumps, this quantity may deviate from both the value $X^S_{\tau_n}=\zeta_n$ at time $\tau_n$ after the control has taken place and the left limit $X^S_{\tau_n-}$. 
We can interpret $\gamma(\mathbf X^S)$ as the value of a natural resource we are managing. 
In most examples of interest, $\gamma$ has a sigmoidal form, so that (without interventions) the value is expected to grow fast whenever $\mathbf X^S$ takes moderate values, while the value grows slowly whenever $\mathbf X^S$ has either large or small values. 
The stopping times $\tau_n$ describe the times of intervention. 
From the motivating problem, it is clear that we only assume downward controls to be admissible, i.e., we assume that $X^S_{\tau_n,-}\geq \zeta_n$ for all\ $n$. 

Our aim is to find a maximizer and the corresponding value $v$ of the expected rewards without fixed transaction costs, defined by
\begin{align}\label{eq:value_levy}
	\liminf_{T\rightarrow\infty} \frac{1}{T}\E^x\Big[\sum_{n \in \N: \tau_n\leq T}\left(\gamma\left(X^S_{\tau_n,-}\right)-\gamma\left(\zeta_n\right)\right)\Big],
\end{align} 	
in the class of all admissible impulse control strategies $S=\left(\tau_n,\zeta_n\right)$. 

We will argue in Section \ref{sec: data levy} below that the main tool for solving \eqref{eq:value_levy} is the ascending ladder height process. For the reader who is not familiar with this notion, we have collected the underlying concepts and main results needed in the following in Appendix \ref{sec: appendix levy}. 
Note that \ref{ass: upward} implies $\lim_{t \to \infty} X_t = \infty$ almost surely. Let $H_t = X_{\mathsf{L}^{-1}_t}$, $t \geq 0$, be the ascending ladder height subordinator of $\X$, where $\mathsf{L} = (\mathsf{L}_t)_{t \geq 0}$ is a version of local time at the supremum and $\mathsf{L}^{-1}= (\mathsf{L}^{-1}_t)_{t \geq 0}$ is its right-continuous inverse. Note that $\mathsf{L}$ can be chosen to be continuous by upward regularity of $\X$, which entails that $t \mapsto \mathsf L^{-1}_t$ is strictly increasing and thus $\mathbf{H}$ is a strictly increasing subordinator (or, put differently, is not compound Poisson). Moreover, for any $t \geq 0$, $\mathsf{L}^{-1}_t$ is an $\F$-stopping time, where $\F = (\cF_t)_{t \geq 0}$ denotes the usual completed natural filtration of $\mathbf{X}$. Motivated by the solution technique for the associated control problem, we choose a scaling of $\mathsf{L}$ such that $\E^0[\mathsf{L}_1^{-1}] = 1$ and hence, by Wald's equality (cf.\ \cite[Corollary 2.5.2]{pena99}),
$$\E^{0}[H_1] = \E^{0}[X_1].$$
Let $(d_H, \Pi_H)$ denote drift and L\'evy measure of $\mathbf{H}$ and $\mathcal{D}(\mathcal{A}_H)$ be the domain of the extended generator $\mathcal{A}_H$ of $H$, i.e., a measurable function $f$ belongs to $\mathcal{D}(\mathcal{A}_H)$ if the exists some measurable function $g$ such that
$$f(H_t) - f(H_0) - \int_0^t g(H_s) \diff{s}, \quad t \geq 0,$$
is a local martingale. By It\={o}'s formula for semimartingales applied to $\mathbf H$, see e.g.\ Theorem I.4.57 in \cite{jacod2003}, it follows that for $f \in \mathcal{C}^2(\R)$ such that
\begin{equation} \label{eq: generator}
g_f(x) = d_Hf^\prime(x) + \int_{0+}^\infty (f(x+y) - f(x))\, \Pi_H(\diff{y}), \quad x \in \R,
\end{equation}
is well-defined that $(f(H_t) - f(H_0) - \int_0^t g_f(H_s) \diff{s})_{t \geq 0}$  is a local martingale. For such functions $f \in \mathcal{C}^2(\R)$ we set $\mathcal{A}_H f = g_f$.
We will see in Section \ref{sec: data levy} below that the auxiliary function $f(x) = \mathcal{A}_H \gamma(x)$ is the key for the solution to \eqref{eq:value_levy}. More precisely, $\mathcal{A}_H \gamma$ yields a maximum representation of the payoff that is needed to guarantee optimality of a threshold time, which can be derived from $\mathcal{A}_H \gamma$. 

\subsection{Estimating generator functionals for the ascending ladder height L\'evy process} \label{subsec: ascending}
Motivated by this observation, to implement a data-driven strategy our goal is to find an estimator of $f(x) = \mathcal{A}_H \gamma(x)$ for  an appropriate $\gamma \in \mathcal{C}^2(\R)$, based on a continuously observed trajectory $(X_t)_{0\leq t \leq T}$ of $\X$ up to some fixed time horizon $T$, with good approximation properties wrt the $\sup$-norm risk. 
Estimating $\mathcal{A}_H \gamma$ is therefore of significant applied interest and, as it will turn out, establishing bounds for the sup-norm risk provides the right tool to infer estimates for the expected regret of data-driven control strategies. For our purposes, we will need to assume that $\gamma^\prime$ is bounded, which is clearly in line with a typically sigmoidal form of $\gamma$.

In order to construct an estimator for $\mathcal{A}_H\gamma$, a first intuitive approach would be to assume that $\Pi_H$ is absolutely continuous with Lebesgue density $\pi_H$ and reconstruct a path $(H_t)_{0 \leq \mathsf{L}^{-1}_t \leq T}$ from the full observations $(X_t)_{0 \leq t \leq T}$ to develop a nonparametric estimator $(\hat{d}_H, \hat{\pi}_H)$ of $(d_H,\pi_H)$ and then analyze the plug-in estimator 
\begin{equation}\label{est alt}
\hat{\mathcal{A}_H \gamma}(x) = \hat{d}_H \gamma(x) + \int_{0+}^\infty (\gamma(x+y) - \gamma(x)) \hat{\pi}_H(y) \diff{y}, \quad x \in \R,
\end{equation}
based on convergence rates of $(\hat{d}_H, \hat{\pi}_H)$ as $T \to \infty$. An appropriate estimator for $\hat{\pi}_H$ in this scenario is given by 
$$\hat{\pi}_H(x) = \frac{1}{\mathsf{L}_T}\sum_{0 \leq t \leq \mathsf{L}_T} K_h(x - \Delta H_t) \one_{\{\Delta H_t > 0\}}, \quad x > 0,$$
for $K_h \coloneqq h^{-1} K(\cdot \slash h)$, where $h = h(T) > 0$ is some bandwidth and $K$ a high-order kernel function, see \cite{shimizu2006,shimizu2009}. 

However, under \ref{ass: upward}---even with a full record of $\X$---local time $\mathsf{L}$ cannot be observed in general since its construction is not purely path dependent, see \cite[Chapter 4]{bertoin1996}. Hence, in our framework such ansatz is hopeless, unless $\X$ is assumed to have a one sided jump structure, i.e., $\X$ is either a subordinator (increasing paths), spectrally negative (only negative jumps but non-monotone paths) or spectrally positive (only positive jumps but non-monotone paths). In the subordinator case we can simply choose $\mathsf{L}_t = t$ and hence $\mathbf{H} = \mathbf{X}$, i.e., the problem of estimating the generator functional via the plug-in estimator reduces to estimation of the Lévy measure and drift of $\X$. When $\X$ is spectrally negative, we can choose $\mathsf{L}_t = c\overbar{X}_t$, where $\overbar{X}_t = \sup_{0 \leq s \leq t} X_s$ and $c$ is some scaling factor. Then, $\mathsf{L}^{-1}_t = T_{t/c}$, where $T_{x} \coloneqq \inf\{t \geq 0: X_t > x\}$ is the first passage time of the level $x$. By exclusively negative jumps, $\X$ reaches its maxima continuously, hence  $H_t = X_{T_{t/c}} = t/c$, where $c$ must be chosen such that 
\begin{equation}\label{eq: scaling spec neg}
1 = \E^0[\mathsf{L}^{-1}_1] = \E^0[T_{1/c}],
\end{equation}
by our required scaling of local time. Thus, estimation of the generator functional in this case boils down to estimating the drift $d_H = 1/c$, which would require estimation of expected first passage times for different levels $x > 0$ and solving \eqref{eq: scaling spec neg} for $c$ with the expectation on the right hand side replaced by the constructed estimators. This is a non-trivial procedure and it is not clear how such issue should be efficiently attacked with a given dataset. For the case of spectrally positive processes, a similar issue would arise for the correct scaling of local time at the infimum. 

Thus, the only direct estimation approach other than the one we introduce below, demands restricting to a subordinator.  If its Lévy measure is finite (i.e., $\X$ must be a compound Poisson subordinator with positive drift since we require \ref{ass: upward}), it follows from Theorem 3.1 in \cite{shimizu2009} that (ignoring the drift part) the $L^2$-risk of the estimator \eqref{est alt} is of order $1\slash \sqrt{T}$. At the end of this section, we will argue that even in this much more restricted setting, our estimator---which can be applied for arbitrary jump structures---matches this performance.

Let us therefore now show how to go a more sophisticated route, exploiting the probabilistic structure of the generator functional by making use of stability results on \textit{overshoots} of L\'evy processes recently discussed in \cite{doering21}. This is in general a very natural approach for statistical inference of objects related to $\mathbf{H}$  due to its intimate connections with overshoots of $\mathbf{X}$, briefly described in the sequel. An overshoot $\cO_x$ of $\mathbf{X}$ over the level $x \geq 0$ is defined by 
$$\cO_x \coloneqq X_{T_x} - x, \quad x \geq 0,$$
where $T_x \coloneqq \inf\{t \geq 0: X_t > x\}$ is again the first hitting time of $(x, \infty)$. If we consider the spatial levels that $\mathbf{X}$ surpasses along its lifetime as time index, it can be shown that under \ref{ass: upward} the overshoot process $\bm \cO \coloneqq (\cO_t)_{t \geq 0}$ is a Feller Markov process.

Its role in revealing the characteristics of the ascending ladder height process stems from the simple observation that the closure of the range of $\mathbf{H}$ almost surely is identical to the range of the running supremum process $\overbar{X}_t \coloneqq \sup_{0 \leq s \leq t} X_s$, $t \geq 0$, and hence the overshoot process $\bm\cO^H$ of $\mathbf{H}$ is indistinguishable from $\bm\cO$. It is shown in \cite{doering21} that the unique invariant distribution of $\bm\cO$ is given by 
\begin{equation*}
\begin{split}
\mu(\diff{y}) &= \frac{1}{\E^{0}[H_1]}\Big(d_H \delta_0(\diff{y}) + \one_{(0,\infty)}(y) \Pi_H(y,\infty) \diff{y} \Big)\\
&= \frac{1}{\eta}\Big(d_H \delta_0(\diff{y}) + \one_{(0,\infty)}(y) \Pi_H(y,\infty) \diff{y} \Big), \quad y \geq  0,
\end{split}
\end{equation*}
with the second equality being a consequence of our particular scaling of local time. If we assume additionally that 
\begin{enumerate}[leftmargin=*,label=($\mathscr{H}1$),ref=($\mathscr{H}1$)] 
\item either, $d_H > 0$, or there exists $(a,b) \subset (0,\infty)$ such that $\bm{\lambda}\vert_{(a,b)} \ll \Pi_H\vert_{(a,b)}$, \label{ass: ergodic}
\end{enumerate}
it follows from Proposition \ref{prop: ov tv} that, for any $x \in \R_+$,
\begin{equation} \label{eq: tv}
\lim_{t \to \infty} \lVert \PP^x(\cO_t \in \cdot) - \mu \rVert_{\mathrm{TV}} = 0,
\end{equation}
where $\lVert \cdot \rVert_{\mathrm{TV}}$ (as before) denotes the total variation distance. 
In Proposition \ref{prop: ov tv}, conditions on the characteristics of the parent process $\X$ implying \ref{ass: ergodic} are given. These underline that most explicit L\'evy models fall into the total variation convergence scheme provided that upward regularity is satisfied, since these usually either possess a non-trivial Gaussian component or the L\'evy measure is constructed from a Lebesgue density. Finally, assuming
\begin{enumerate}[leftmargin=*,label=($\mathscr{H}2$),ref=($\mathscr{H}2$)]
\item there is $\lambda > 0$ such that $\E^{0}[\exp(\lambda H_1)] < \infty$, \label{ass: exp}
\end{enumerate}
which is true iff $\Pi\vert_{[1,\infty)}$ integrates $x \mapsto \exp(\lambda x)$, Proposition \ref{prop: ov exp} states that total variation convergence in \eqref{eq: tv} takes place at exponential rate and that $\bm\cO$ is exponentially $\beta$-mixing whenever the initial distribution integrates $\exp(\lambda \cdot)$. In particular, the stationary overshoot process is exponentially $\beta$-mixing, with $\beta$-mixing coefficient
\begin{equation} \label{eq: ov exp mix}
\beta(t) = \int_{\R_+} \lVert \PP^x(\cO_t \in \cdot) - \mu \rVert_{\mathrm{TV}} \, \mu(\diff{x}) \leq C(\lambda,\delta,\mu) \mathrm{e}^{-t/(2+\delta)}, \quad t \geq 0,
\end{equation}
for some constant $C(\lambda,\delta,\mu) \in (0,\infty)$ and arbitrary $\delta \in (0,1)$. Starting from this general setup, the fundamental observation for our purposes is that, for $\gamma \in \mathcal{C}^2(\R)$, we can rewrite \eqref{eq: generator} in terms of an integral wrt the invariant overshoot distribution $\mu$.

\begin{lemma}
For any $\gamma \in \mathcal{C}^2(\R)$ with bounded derivative we have
\[
\mathcal{A}_H \gamma(x) = \int_{\R_+} \eta \gamma^\prime(x+y) \, \mu(\diff{y}), \quad x \in \R.
\]
\end{lemma}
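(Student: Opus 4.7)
The plan is to unpack both sides of the claimed identity via their explicit expressions and match the resulting expressions by an application of Fubini's theorem. By the explicit form of the invariant overshoot distribution $\mu$ recalled just before the lemma, the right-hand side immediately becomes
\[
\int_{\R_+} \eta \gamma^\prime(x+y)\, \mu(\diff{y}) \;=\; d_H\gamma^\prime(x) + \int_0^\infty \gamma^\prime(x+y)\, \Pi_H(y,\infty)\, \diff{y}.
\]
Since $\mathcal{A}_H\gamma(x) = d_H\gamma^\prime(x) + \int_{0+}^\infty(\gamma(x+y)-\gamma(x))\,\Pi_H(\diff{y})$ by \eqref{eq: generator}, the drift contributions coincide and it remains to verify
\[
\int_{0+}^\infty(\gamma(x+y)-\gamma(x))\,\Pi_H(\diff{y}) \;=\; \int_0^\infty \gamma^\prime(x+u)\,\Pi_H(u,\infty)\,\diff{u}.
\]

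To do so, I would write $\gamma(x+y)-\gamma(x) = \int_0^y \gamma^\prime(x+u)\,\diff{u}$ and swap the order of integration via Fubini's theorem, which transforms $\int_{0+}^\infty \int_0^y \gamma^\prime(x+u)\,\diff{u}\,\Pi_H(\diff{y})$ into $\int_0^\infty \gamma^\prime(x+u) \Pi_H(u,\infty)\,\diff{u}$, precisely as needed.

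The only subtle point is the justification of Fubini, which is handled by the hypothesis of bounded $\gamma^\prime$ together with \ref{ass: upward}. Indeed, setting $M \coloneqq \lVert\gamma^\prime\rVert_\infty$, the Lévy--Khintchine representation for the subordinator $\mathbf{H}$ combined with the scaling $\E^0[H_1] = \E^0[X_1] = \eta$ yields $\int_{0+}^\infty y\,\Pi_H(\diff{y}) = \eta - d_H < \infty$, so
\[
\int_{0+}^\infty\!\!\int_0^y |\gamma^\prime(x+u)|\,\diff{u}\,\Pi_H(\diff{y}) \;\leq\; M\int_{0+}^\infty y\,\Pi_H(\diff{y}) \;<\; \infty,
\]
making the interchange legitimate. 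This argument is essentially a line of computation, so the lemma should follow directly with no genuine obstacle beyond this integrability check.
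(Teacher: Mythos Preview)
Your proof is correct and follows essentially the same approach as the paper: both expand the invariant measure $\mu$ explicitly, use the fundamental theorem of calculus to write $\gamma(x+y)-\gamma(x)=\int_0^y \gamma'(x+u)\,\diff u$, and apply Fubini to swap the order of integration. Your justification of the Fubini step via $\int_{0+}^\infty y\,\Pi_H(\diff y)<\infty$ is in fact more explicit than the paper's, which simply invokes Fubini after noting both sides are well defined.
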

\begin{proof}
Note first that $\E^0[H_1] < \infty$ and boundedness of $\gamma^\prime$ guarantee that both sides of the equation are well defined. Plugging in and using Fubini we obtain for $x \in \R$,
\begin{align*}
\int_{\R_+} \eta \gamma^\prime(x+y) \, \mu(\diff{y}) &= d_H \gamma^\prime(x) + \int_{0+}^\infty \gamma^\prime(x+y) \int_{y+}^\infty \, \Pi_H(\diff{z}) \diff{y}\\
&= d_H \gamma^\prime(x) + \int_{0+}^\infty \int_{(0,z)} \gamma^\prime(x+y) \diff{y}\, \Pi_H(\diff{z})\\
&= d_H \gamma^\prime(x) + \int_{0+}^\infty (\gamma(x+z) - \gamma(x)) \, \Pi_H(\diff{z})\\
&= \mathcal{A}_H \gamma(x).
\end{align*}
\end{proof}
\begin{remark}
This formula is valid for any subordinator with finite mean.
\end{remark}

It follows from von Neumann's ergodic theorem that, for any $x \geq 0$ and $p \geq 1$,
$$\lim_{S \to \infty} \frac{1}{S} \int_0^S \eta\gamma^\prime(x+ \cO_t) \diff{t} = \mathcal{A}_H \gamma(x), \quad \text{in } L^p(\PP^\mu).$$
It is therefore natural to consider as an estimator of $f(x) = \mathcal{A}_H \gamma(x)$, based on overshoot observations $(\cO_t)_{0 \leq t \leq S}$ up to some \textit{spatial} level $S > 0$, the unbiased (under $\PP^\mu$) estimator 
$$\tilde{f}_S(x) = \frac{1}{S} \int_0^S \eta\gamma^\prime(x+ \cO_t) \diff{t}, \quad x \in \R,$$
with $\eta = \E^0[X_1] > 0$ assumed to be known (which is not a strict assumption in light of i.i.d.\ increments of $\mathbf{X}$). 
To establish convergence bounds wrt to the $\sup$-norm risk, we make use of Proposition \ref{prop:mix}.
We apply this result to the function class 
$$\mathcal{G} \coloneqq \{\eta\gamma^\prime(x+ \cdot) - \mathcal{A}_H \gamma(x): x \in \mathbb{Q} \cap D \},$$
to find a convergence rate of $1 \slash \sqrt{S}$ for the $\sup$-norm risk 
$$\mathcal{R}^D_\infty\big(\tilde{f}_{S}, f\big) \coloneqq \E^{0}\big[\big\lVert \tilde{f}_S - f\big\rVert_{D,\infty} \big],$$
for some bounded open set $D \subset \R$. The choice of evaluating the sup-norm risk wrt $\PP^0$ is somewhat arbitrary and can be replaced by $\PP^x$ for any $x \geq 0$ by spatial homogeneity of the Lévy process. We stress however that, although we make heavily use of ergodic arguments, we do not need the process to be started in the stationary overshoot distribution for our results. Similar to the proof of Corollary \ref{coro:invdens}, the key for this is Lemma \ref{lem:conv int} in conjunction with exponential ergodicity of $\bm{\mathcal{\cO}}$. 

Let us assume for the rest of the section that \ref{ass: upward} -- \ref{ass: exp} are satisfied, if not mentioned otherwise.

\begin{proposition}\label{prop: rate level}
Let $\gamma \in \mathcal{C}^2(\R)$ such that $\gamma^\prime$ is bounded.
Then there exists a constant $C_1 > 0$ such that
$$\mathcal{R}^D_\infty\big(\tilde{f}_{S}, f\big) \leq C_1\frac{1}{\sqrt{S}}.$$
\end{proposition}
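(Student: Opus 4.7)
The plan is to apply Proposition \ref{prop:mix} to the stationary overshoot process $\bm\cO$ under $\PP^\mu$, and then transfer the resulting bound to $\PP^0$ via Lemma \ref{lem:conv int}, exactly as for the invariant density estimator in Theorem \ref{theo:invdens} and Corollary \ref{coro:invdens}.

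First, by stationarity and the integral representation of $\mathcal{A}_H \gamma$ just established, $\tilde f_S(x)$ is unbiased under $\PP^\mu$. Setting $g_x(y) \coloneqq \eta\gamma^\prime(x+y) - f(x)$, one has $\mu(g_x) = 0$ and
$$\sqrt{S}\,(\tilde f_S(x) - f(x)) = \G_S(g_x).$$
The $g_x$ are uniformly bounded by $2\eta\|\gamma^\prime\|_\infty$, and—upon invoking the natural regularity $\gamma^{\prime\prime}\in L^\infty(\R)$ implicit in the setup—the map $x \mapsto g_x$ is Lipschitz in sup-norm with constant $\lesssim \|\gamma^{\prime\prime}\|_\infty$ uniformly in $y$.

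Next, I would apply Proposition \ref{prop:mix} with $p = 1$ to the countable class $\mathcal{G} \coloneqq \{g_x : x \in \mathbb{Q}\cap D\}$. Exponential $\beta$-mixing of $\bm\cO$ under $\PP^\mu$ is supplied by \eqref{eq: ov exp mix}. The key simplification here—anticipated in Section \ref{sec:2}—is that unbiasedness of the estimator lets us control the pseudo-metric $d_{\G,\tau}$ by a purely analytical covariance-decay argument: exponential $\beta$-mixing together with the standard covariance inequality for bounded functions gives
$$\Var^\mu\Big(\tfrac{1}{\sqrt{\tau}} \int_0^\tau g(\cO_t)\diff{t}\Big) \lesssim \|g\|_\infty^2 \int_0^\infty \beta(t) \diff{t} \lesssim \|g\|_\infty^2$$
for bounded $g$. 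Hence $d_{\G,\tau}(g_x, g_{x^\prime}) \lesssim \|g_x - g_{x^\prime}\|_\infty \lesssim |x - x^\prime|$, and since $D$ is bounded, both $\mathcal N(u,\mathcal G,d_\infty)$ and $\mathcal N(u,\mathcal G,d_{\G,\tau})$ are of polynomial order $1/u$ up to a fixed threshold and equal to $1$ beyond.

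Choosing $m_S \asymp \log S$ with constant large enough that the exponential remainder in \eqref{eq: unimom} is of order $1$, plugging the entropy and variance bounds into Proposition \ref{prop:mix}, and dividing by $\sqrt{S}$ yields $\E^\mu[\|\tilde f_S - f\|_{D,\infty}] \lesssim 1/\sqrt{S}$. Finally, apply Lemma \ref{lem:conv int} to the bounded function $(x,y) \mapsto \eta\gamma^\prime(x+y) - f(x)$ on $D \times \R_+$, using exponential ergodicity of $\bm\cO$ from Proposition \ref{prop: ov exp} started at $\cO_0 = 0$: the resulting gap between the expectations under $\PP^0$ and $\PP^\mu$ is of order $\log(S)/S$, which is negligible compared to $1/\sqrt{S}$, giving the claim.

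The main obstacle is the analytical control of the pseudo-metric $d_{\G,\tau}$. In the diffusion analysis of Theorem \ref{theo:invdens}, this required the sharp local-time moment estimate of Proposition 5.1 in \cite{dalrei06}, a tool that is not available for overshoots. The unbiasedness of $\tilde f_S$ is precisely what renders the coarser $\beta$-mixing–based $L^\infty$-covariance bound sufficient, as previewed in Section \ref{sec:2}.
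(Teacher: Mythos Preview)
Your proposal is correct and follows essentially the same route as the paper: bound $d_{\G,\tau}$ by $d_\infty$ via the $\beta$-mixing covariance inequality, control the $d_\infty$-entropy through Lipschitz continuity of $x\mapsto g_x$, apply Proposition \ref{prop:mix} under $\PP^\mu$, then transfer to $\PP^0$ with Lemma \ref{lem:conv int}. The only cosmetic differences are that the paper takes $m_S=\sqrt{S}$ rather than $m_S\asymp\log S$ (both work), and that you explicitly flag the need for $\gamma''\in L^\infty(\R)$ to get the global Lipschitz bound on $x\mapsto g_x(\cdot)$, whereas the paper phrases this as ``$\gamma'$ is Lipschitz on the bounded set $D$'' even though the argument of $\gamma'$ ranges over the unbounded set $D+\R_+$.
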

\begin{proof}
By stationarity of $\bm\cO$ under $\PP^\mu$ and its exponential $\beta$-mixing property \eqref{eq: ov exp mix}, which is guaranteed given our assumptions, it follows easily (see, e.g., the proof of Proposition 2.4 in \cite{dexheimer20}) for any bounded $g$ and $t > 0$ that 
\begin{align*}
\lVert g \rVert_{\mathbb{G},t}^2 = \frac{1}{t}\mathrm{Var}\Big(\int_0^t g(\cO_s)\diff{s} \Big) &\leq 2\lVert g \rVert_\infty^2 \int_0^t \int_0^\infty \lVert \PP^x(\cO_s \in \cdot) - \mu \rVert_{\mathrm{TV}} \,\mu(\diff{x})\diff{s}\\
&\leq 2\lVert g \rVert_\infty^2 \varrho(\lambda,\delta,\mu) (2+\delta),
\end{align*}
for some $\delta \in (0,1)$. Hence, there exists a constant $\tilde{C} > 0$ such that, independently of $t > 0$, for any bounded $f,g$
\begin{equation}\label{eq: dist bound}
d_{\mathbb{G},t}(f,g) \leq \tilde{C} d_\infty(f,g).
\end{equation}
Letting $\mathcal{G} \coloneqq \{\eta\gamma^\prime(x+ \cdot)- \mathcal{A}_H \gamma(x): x \in \mathbb{Q} \cap D\}$ and using the fact that $\gamma^\prime$ is Lipschitz on the bounded set $D$ thanks to $\gamma \in \mathcal{C}^2(\R)$, it follows with Lemma \ref{lemma: covering numbers} that 
$$\mathcal{N}(\varepsilon, \mathcal{G}, d_\infty) \leq \frac{4\eta L\mathrm{diam}(D)}{\varepsilon}, \quad \varepsilon > 0,$$
where $L$ denotes the Lipschitz constant of $\gamma^\prime$ on $D$. It therefore follows that the associated entropy integral is finite, i.e.,
$$\int_0^\infty\log\mathcal N\big(u,\mathcal G, d_\infty\big)\d u < \infty,$$
and by \eqref{eq: dist bound} the same is true for the entropy integral 
$$\int_0^\infty \sqrt{\log\mathcal N\big(u,\mathcal G, d_{\G,t}\big)}\d u < \overbar{C},$$
with a constant $\overbar{C}$ independent of $t$. Since $f(x) = \mathcal{A}_H\gamma(x) = \eta \mu(\gamma^\prime(x+ \cdot))$, choosing $m_S = \sqrt{S}$ and plugging into \eqref{eq: unimom} therefore reveals that there exists a constant $C_0 > 0$ such that
\begin{equation} \label{eq: rate station}
\begin{split}
\E^\mu\Big[\sup_{x \in D} \lvert \tilde{f}_S(x) - f(x)\rvert \Big] &= \E^\mu\Big[\sup_{x \in D \cap \mathbb{Q}_+} \lvert \tilde{f}_S(x) - f(x)\rvert \Big]\\
&= \frac{1}{\sqrt{S}}\E^\mu\left[\sup_{g\in\mathcal G}|\G_S(g)|\right]\\
&\leq C_0 \frac{1}{\sqrt{S}}.
\end{split}
\end{equation}
As in the proof of Corollary \ref{coro:invdens}, we transfer the sup-norm risk bound from the stationary regime to the case when $\X$ is started in $0$. This can again be achieved utilizing exponential ergodicity of $\bm{\mathcal{O}}$. Let 
$$g(x,y) = \eta \gamma^\prime(x+y) - \mathcal{A}_H \gamma(x), \quad x,y \geq 0.$$
Then,
$$\lVert g \rVert_\infty \leq \mathfrak{B} \coloneqq \lVert \eta \gamma^\prime \rVert_\infty +\lVert \mathcal{A}_H\gamma\rVert_\infty,$$ 
which is finite by boundedness of $\gamma^\prime$. Using exponential ergodicity of $\bm{\mathcal{O}}$ as stated in Proposition \ref{prop: ov exp} and applying Lemma \ref{lem:conv int} shows that for $\delta \in (0,1)$ and $S$ large enough such that $S \geq (2+\delta) \log S$ 
\begin{align*}
&\Big\vert \E^0\Big[\sup_{x \in D} \lvert \tilde{f}_S(x) - f(x)\rvert \Big] - \E^\mu\Big[\sup_{x \in D} \lvert \tilde{f}_S(x) - f(x)\rvert \Big] \Big\vert \\
&\leq \Big\vert \E^0\Big[\sup_{x \in D}\Big\vert\frac{1}{T}\int_0^T g(x,\mathcal{O}_s)\Big\vert\Big] - \Big\vert \E^\mu\Big[\sup_{x \in D}\Big\vert\frac{1}{T}\int_0^T g(x,\mathcal{O}_s)\Big\vert\Big] \Big\vert\\
&\leq 2(2+\delta) \mathfrak{B} \frac{\log S}{S} + c(\delta)\mathfrak{B} \mathcal{R}_\lambda \exp(\lambda\cdot)(0) \frac{1}{S}\\
&\lesssim \frac{\log S}{S} + S^{-1}.
\end{align*}
Together with \eqref{eq: rate station}, this implies that
$$\mathcal{R}^D_\infty\big(\tilde{f}_{S}, f\big) \leq C_1 S^{-1/2}$$
for some constant $C_1 > 0$, by triangle inequality.
\end{proof}

Proposition \ref{prop: rate level} shows that $\tilde{f}_S$ is not only an elegant but also efficient estimator for $f = \mathcal{A}_H\gamma$, provided that we have an overshoot sample $(\cO_t)_{0 \leq t \leq S}$ available up to a fixed \textit{level} $S$. However, we observe the L\'evy process up to a fixed \emph{time} $T$ and not up to the random first passage time $T_S$. 
Our agenda therefore must be to build an estimator $\hat{f}_T$ which is $\mathcal{F}_T$-measurable and whose $\sup$-norm convergence properties can be inferred from Proposition \ref{prop: rate level}. To this end, we aim to make use of the law of large numbers for L\'evy processes. Recalling that $\lim_{T \to \infty} X_T\slash T = \E^0[X_1] = \eta$ almost surely for any starting distribution of $\mathbf X$, it follows that, for any $\varepsilon > 0$,
\begin{equation} \label{eq: deviation}
\PP^0\Big(\Big\lvert \frac{X_T}{T} - \eta \Big\rvert > \varepsilon \Big) \underset{T \to \infty}{\longrightarrow} 0.
\end{equation}
Define
\begin{align}\label{eq:tilde_f}
	\hat{f}_T(x) \coloneqq \frac{1}{X_T} \int_0^{X_T} \eta \gamma^\prime(x+\cO_t) \diff{t} \one_{(0,\infty)}(X_T), \quad x \in \R,
\end{align}
and note that, since $\{X_T > t\} \subset \{T_t \leq T\}$ for any $t \geq 0$, we have 
$$\gamma^\prime(x+\cO_t) \one_{\{t< X_T\}} = \gamma^\prime(x+\cO_t) \one_{\{T_t \leq T\} \cap \{t< X_T\}} \in \cF_T,$$
as a consequence of $\gamma^\prime(x+\cO_t)\one_{\{T_t \leq T\}} \in \cF_T$ thanks to $T_t$ being an $\F$-stopping time. Therefore, $\hat{f}_T(x) \in \cF_T$ for any $x \in \R$ as desired. As a key result, the following preparatory lemma shows that the two essential components involved in an upper bound of the sup-norm risk of $\hat{f}_T$ are indeed the rate of $\tilde{f}_{\eta T} = \tilde{f}_{\E^0[X_T]}$ and the speed of convergence in \eqref{eq: deviation}. 

\begin{lemma}\label{lemma: rate time}
Let $\gamma \in \mathcal{C}^2(\R)$ such that $\gamma^\prime$ is bounded. 
Then, there exists a constant $C > 0$ such that, for any $\varepsilon \in (0,\eta \wedge 1 \slash 2)$ and $T > 0$, we have
\begin{equation}\label{eq: rate time}
\mathcal{R}^D_\infty\big(\hat{f}_T, f\big) \leq C\Big(\frac{1}{\sqrt{\eta T}} + \frac{\varepsilon}{\eta} + \PP^0\Big(\Big\vert \frac{X_T}{T} - \eta \Big\vert > \varepsilon \Big) \Big).
\end{equation}
\end{lemma}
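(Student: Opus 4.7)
The plan is to peel off the event on which $X_T$ deviates significantly from its mean $\eta T$, then reduce the remaining contribution to the deterministic-level rate already established in Proposition~\ref{prop: rate level}. Concretely, I introduce $A_\varepsilon \coloneqq \{|X_T/T - \eta| \leq \varepsilon\}$ and split $\E^0[\|\hat f_T - f\|_\infty]$ into contributions over $A_\varepsilon$ and $A_\varepsilon^{\mathrm c}$. On $A_\varepsilon^{\mathrm c}$ I use only that both $\hat f_T(x)$ (being either an average of $\eta\gamma'(x+\cdot)$ or the value zero) and $f(x) = \eta\mu(\gamma'(x+\cdot))$ are uniformly bounded by $\eta\|\gamma'\|_\infty$, so $\|\hat f_T - f\|_\infty \leq 2\eta\|\gamma'\|_\infty$ pathwise; taking expectations contributes exactly the last summand of the claimed bound.

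On $A_\varepsilon$, the hypothesis $\varepsilon < \eta$ forces $X_T \geq (\eta-\varepsilon)T > 0$, so the indicator in \eqref{eq:tilde_f} is active and $\hat f_T(x) = \tilde f_{X_T}(x)$. I then decompose
\[
\hat f_T - f \;=\; \bigl(\tilde f_{X_T} - \tilde f_{\eta T}\bigr) + \bigl(\tilde f_{\eta T} - f\bigr).
\]
The second summand is handled immediately by Proposition~\ref{prop: rate level} applied at the deterministic level $S = \eta T$, contributing $C_1/\sqrt{\eta T}$ to the bound.

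The first summand requires a pathwise comparison of two level-truncated averages sharing the same realisation of $\bm\cO$. For any $0 < S_1 < S_2$,
\[
\tilde f_{S_1}(x) - \tilde f_{S_2}(x) \;=\; \frac{S_2 - S_1}{S_1 S_2}\int_0^{S_1}\eta\gamma'(x+\cO_t)\,\diff t \;-\; \frac{1}{S_2}\int_{S_1}^{S_2}\eta\gamma'(x+\cO_t)\,\diff t,
\]
and boundedness of $\eta\gamma'$ yields $\|\tilde f_{S_1} - \tilde f_{S_2}\|_\infty \leq 2\eta\|\gamma'\|_\infty\,|S_1 - S_2|/\max(S_1,S_2)$. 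Specialising to $\{S_1,S_2\} = \{X_T,\eta T\}$ on $A_\varepsilon$, I have $|X_T - \eta T| \leq \varepsilon T$ and $\max(X_T,\eta T) \geq \eta T$, which gives $\|\tilde f_{X_T} - \tilde f_{\eta T}\|_\infty \leq 2\|\gamma'\|_\infty\varepsilon = 2\eta\|\gamma'\|_\infty\cdot \varepsilon/\eta$ — this is the middle term of the claim once $\eta$ is absorbed into the global constant.

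I do not anticipate any hidden obstacle; the only mildly delicate point is the pathwise Lipschitz-in-$S$ inequality for $\tilde f_S$, in particular using the denominator $\max(S_1,S_2)$ (rather than $\min$) so that one picks up $\varepsilon/\eta$ on $A_\varepsilon$ instead of the weaker $\varepsilon/(\eta-\varepsilon)$. Combining the three estimates via the triangle inequality and taking $C = \max(C_1, 2\eta\|\gamma'\|_\infty)$ yields \eqref{eq: rate time} uniformly in $T > 0$ and $\varepsilon \in (0,\eta\wedge \tfrac12)$; the bound $\varepsilon < 1/2$ plays no active role in this argument and is only carried along because it will be used when optimising in $\varepsilon$ downstream.
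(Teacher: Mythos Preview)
Your proof is correct and follows the same overall strategy as the paper: split on the event $A_\varepsilon=\{|X_T/T-\eta|\le\varepsilon\}$, bound trivially on the complement, and on $A_\varepsilon$ reduce to Proposition~\ref{prop: rate level} at the deterministic level $S=\eta T$ plus a remainder of order $\varepsilon/\eta$. The only difference is in how the remainder is extracted: the paper rewrites $\hat f_T$ via the factor $\eta T/X_T$ and bounds this by $2$ using $\varepsilon<1/2$, whereas your pathwise Lipschitz estimate $\|\tilde f_{S_1}-\tilde f_{S_2}\|_\infty\le 2\eta\|\gamma'\|_\infty\,|S_1-S_2|/\max(S_1,S_2)$ gives the $\varepsilon/\eta$ term directly and, as you note, makes the restriction $\varepsilon<1/2$ genuinely superfluous for this lemma.
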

\begin{proof}
Let again $\mathfrak B \coloneqq \eta\lVert \gamma^\prime  \rVert_{\infty} + \lVert \mathcal{A}_H \gamma \rVert_{\infty} < \infty$. 
Then, for $C = 2\max\{1,C_1,\mathfrak B\}$, it follows by the triangle inequality and Proposition \ref{prop: rate level} that 
\begin{align*}
&\E^0 \left[\sup_{x \in D} \Big\vert\frac{1}{X_T} \int_0^{X_T} \eta \gamma^\prime(x+ \cO_t) \diff{t} - \mathcal{A}_H \gamma(x) \Big\vert\right] \\
&\quad \leq \E^0 \left[\frac{\eta T}{X_T}\sup_{x \in D \cap \mathbb{Q}} \Big\vert\frac{1}{\eta T} \int_0^{\eta T \frac{X_T}{\eta T}} \eta \gamma^\prime(x+ \cO_t) \diff{t} - \mathcal{A}_H \gamma(x) \Big\vert \,; \, \Big\{ \Big\vert \frac{X_T}{T} - \eta \Big\vert \leq \varepsilon \Big\}\right] + \mathfrak{B} \PP^0\Big(\Big\vert \frac{X_T}{T} - \eta \Big\vert > \varepsilon \Big)\\
&\quad \leq 2\E^0 \left[\sup_{x \in D \cap \mathbb{Q}} \sup_{\lvert \alpha \rvert \leq \varepsilon\slash \eta}\Big\vert\frac{1}{\eta T} \int_0^{\eta T (1+\alpha)} \eta \gamma^\prime(x+ \cO_t) \diff{t} - \mathcal{A}_H \gamma(x) \Big\vert\right] + \mathfrak{B}\PP^0\Big(\Big\vert \frac{X_T}{T} - \eta \Big\vert > \varepsilon \Big)\\
&\quad \leq 2\left(\mathcal{R}^D_\infty(\tilde{f}_{\eta T},f)+ \E^0 \left[\sup_{x \in D \cap \mathbb{Q}} \sup_{\lvert \alpha \rvert \leq \varepsilon\slash \eta}\Big\vert\frac{1}{\eta T} \int_{\eta T}^{\eta T (1+\alpha)} \eta \gamma^\prime(x+ \cO_t) \diff{t}\Big\vert\right]\right) + \mathfrak{B} \PP^0\Big(\Big\vert \frac{X_T}{T} - \eta \Big\vert > \varepsilon \Big)\\
&\quad \leq C\Big(\frac{1}{\sqrt{\eta T}} + \frac{\varepsilon}{\eta} + \PP^0\Big(\Big\vert \frac{X_T}{T} - \eta \Big\vert > \varepsilon \Big) \Big),
\end{align*}
where for the second inequality we used that, by our choice of $\varepsilon\in(0,\eta\wedge1\slash 2)$, we have $\eta T\slash X_T \leq (1- \varepsilon)^{-1} \leq 2$ on $\{\lvert X_T \slash T - \eta \vert \leq \varepsilon\}$.
\end{proof}

The following result complements results on tail asymptotics of the marginal $X_T$ for \textit{fixed} $T > 0$ of a Lévy process $\X$ with bounded jumps, which can be found in Theorem 26.1 of \cite{sato2013}, and non-asymptotic tail bounds of a Lévy process for small times $T > 0$, recently discussed in \cite{duval20}. It is a slight digression from the remainder of this section in the sense that the assumptions \ref{ass: upward}-\ref{ass: exp} are dropped in favour of bounded jumps and zero mean of $\X$. The statement is of independent interest since it gives nonasymptotic bounds on the speed of convergence of the law of large numbers for Lévy processes with bounded jumps and allows establishing optimal rates for our concrete estimation problem. 

\begin{theorem}\label{theo: bounded jumps}
Suppose that $\X$ is a non trivial zero mean Lévy process with bounded jumps and Lévy triplet $(a,\sigma^2,\Pi).$ Then, there exists $\beta > 0$ and $T(p) > 0$ for $p > 0$ such that for any $T \geq T(p)$,
$$\PP^0\Big(\lvert X_T \rvert > \sqrt{\beta T \log (T^p)}\Big) \leq 2 T^{-p/2}.$$
\end{theorem}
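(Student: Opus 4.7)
The plan is to derive a Bernstein-type concentration inequality for $X_T$ by exploiting that bounded jumps guarantee finite exponential moments of all orders. Let $M > 0$ be such that $\mathrm{supp}(\Pi) \subset [-M, M]$. Since $\X$ has zero mean, one computes from the L\'evy--Khintchine formula that, for every $\lambda \in \R$,
\[
\E^0[\exp(\lambda X_T)] = \exp(T\psi(\lambda)), \qquad \psi(\lambda) = \tfrac{\sigma^2}{2}\lambda^2 + \int_\R \bigl(\mathrm{e}^{\lambda x} - 1 - \lambda x\bigr) \, \Pi(\diff{x}),
\]
which is well-defined for all $\lambda \in \R$ thanks to $\mathrm{supp}(\Pi) \subset [-M,M]$ (the linear drift cancels against the compensator since $\E^0[X_1] = 0$).

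First I would bound $\psi$ using the elementary inequality $\mathrm{e}^y - 1 - y \leq \frac{y^2}{2}\mathrm{e}^{\lvert y \rvert}$, valid for all $y \in \R$. Restricting to $\lvert \lambda \rvert \leq 1/M$, this yields
\[
\psi(\lambda) \leq \tfrac{\lambda^2}{2}\Bigl(\sigma^2 + \mathrm{e} \cdot \textstyle\int_\R x^2 \, \Pi(\diff{x})\Bigr) \eqqcolon \tfrac{\lambda^2}{2}\beta,
\]
where the integral is finite because $\Pi$ has bounded support. This is the sub-Gaussian regime of the cumulant. Applying Chernoff's inequality then gives, for any $r > 0$ and any $\lambda \in (0, 1/M]$,
\[
\PP^0(X_T > r) \leq \mathrm{e}^{-\lambda r + T\psi(\lambda)} \leq \exp\bigl(-\lambda r + \tfrac{\beta T}{2}\lambda^2\bigr).
\]
Optimising in $\lambda$ yields the optimal choice $\lambda^\ast = r/(\beta T)$, which satisfies $\lambda^\ast \leq 1/M$ provided $r \leq \beta T/M$.

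Now I would plug in $r = \sqrt{\beta T \log(T^p)} = \sqrt{\beta p T \log T}$. Then $\lambda^\ast = \sqrt{p\log T/(\beta T)} \to 0$ as $T \to \infty$, so the constraint $\lambda^\ast \leq 1/M$ (equivalently $r \leq \beta T/M$) is met for all $T \geq T(p)$ with some threshold $T(p)$ depending only on $p, M, \beta$. For such $T$, the Chernoff bound evaluates to
\[
\PP^0(X_T > \sqrt{\beta T \log(T^p)}) \leq \exp\Bigl(-\tfrac{r^2}{2\beta T}\Bigr) = \exp\bigl(-\tfrac{p}{2}\log T\bigr) = T^{-p/2}.
\]
Finally, applying the identical argument to the L\'evy process $-\X$ (which again has bounded jumps, zero mean, and the same $\beta$) yields the matching lower-tail bound; combining the two and using a union bound produces the stated factor of $2$.

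The main conceptual step is the passage from exponential moments (automatic from bounded jumps) to a sub-Gaussian bound on $\psi$ in a neighbourhood of the origin, since away from the origin $\psi$ is only exponentially bounded; the rest is then a routine Chernoff optimisation. The only technical care is verifying that the optimiser $\lambda^\ast$ remains inside the admissible range $[0,1/M]$ for the target deviation $r = \sqrt{\beta p T\log T}$, which holds for all sufficiently large $T$ and determines the explicit threshold $T(p)$.
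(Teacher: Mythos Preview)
Your proof is correct. Both arguments are Chernoff bounds at heart, but they are organised differently. The paper invokes Sato's Lemma~26.4, which expresses the optimal Chernoff bound as $\PP^0(X_T>x)\le\exp\bigl(-T\int_0^{x/T}\theta(z)\,\diff z\bigr)$ with $\theta=(\psi')^{-1}$, and then shows $\theta(z)\ge z/\beta$ for small $z$ by analysing $\limsup_{z\downarrow 0} z/\theta(z)$. You instead bound the cumulant directly, $\psi(\lambda)\le\tfrac{\beta}{2}\lambda^2$ on $\lvert\lambda\rvert\le 1/M$, and optimise the resulting sub-Gaussian Chernoff bound by hand. The two routes yield the same Gaussian tail $\exp(-r^2/(2\beta T))$, but yours is more self-contained (no external lemma) and produces an explicit constant $\beta=\sigma^2+e\int x^2\,\Pi(\diff x)$, whereas the paper's $\beta=\delta\bigl(\sigma^2+\int x^2\,\Pi(\diff x)\bigr)$ carries a non-explicit $\delta\ge 1$. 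The paper's detour through the Legendre transform is in principle sharper, but since it is immediately followed by a linear lower bound on $\theta$ near zero, nothing is gained over your direct quadratic upper bound on $\psi$. Your handling of the pure Gaussian case ($\Pi=0$) is also cleaner: it falls out automatically rather than needing a separate sentence as in the paper.
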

\begin{proof}
Let $\alpha \coloneqq \inf\{z > 0: \supp(\Pi) \subset \{x \in \R: \lvert x \rvert \leq z\}\}$ be the maximal jump size of $\X$. If $\alpha = 0$, $\X$ is a scaled Brownian motion since $\E^0[X_1] = 0$ and $\X$ was assumed non-trivial. In this case, the result follows directly from the exponential decay of tails of Brownian motion. Suppose therefore that $\alpha > 0$. We only show $\PP^0(X_T > \sqrt{\beta T \log T}) \leq T^{-p/2}$. The statement then follows by performing the same calculations for the dual process $\hat{\X} = -\X$, which also is a zero mean Lévy process with  jumps bounded in absolute value by $\alpha$. Since $\X$ has bounded jumps and zero mean, its Laplace exponent $\psi$ is well-defined on $(0,\infty)$ and given by 
$$\psi(z) \coloneqq \log \E^0[\exp(zX_1)] = \frac{\sigma^2 z^2}{2} + \int_{-\alpha}^{\alpha} (\mathrm{e}^{zx}-1-zx)\, \Pi(\diff{x}), \quad z > 0.$$
Furthermore, observe that $\psi$ is smooth with derivative 
$$\psi^\prime(z) = \sigma^2 z + \int_{-\alpha}^{\alpha} (x(\mathrm{e}^{zx}-1))\, \Pi(\diff{x}), \quad z > 0.$$
By \cite[Lemma 26.4]{sato2013}, $\psi^\prime$ is invertible on $(0,\infty)$ with strictly increasing inverse denoted by $\theta$. As in the proof of \cite[Lemma 26.5]{sato2013} it follows from 
$$z = \sigma^2 \theta(z) + \int_{-\alpha}^{\alpha} x(\mathrm{e}^{\theta(z) x} -1)\, \Pi(\diff{x}), \quad z > 0,$$
that 
$$\frac{z}{\theta(z)} \leq \sigma^2 + \mathrm{e}^{\theta(z)\alpha} \int_{-\alpha}^{\alpha} x^2 \, \Pi(\diff{x}), \quad z > 0.$$
Since $\theta(0+) = 0$, this yields  
$$\limsup_{z \downarrow 0} \frac{z}{\theta(z)} \leq \sigma^2 + \int_{-\alpha}^{\alpha} x^2 \, \Pi(\diff{x}).$$
This implies that there exists some $\varepsilon > 0$ and $\delta \geq 1$ such that for all $z \in (0,\varepsilon)$,
\begin{equation}\label{eq: subord1}
\theta(z) \geq \frac{z}{\delta\big(\sigma^2 + \int_{-\alpha}^{\alpha} x^2\, \Pi(\diff{x})\big)}.
\end{equation}
Moreover, it follows from \cite[Lemma 26.4]{sato2013} that for any $ x > 0$,
\begin{equation}\label{eq: subord2}
\PP^0(X_T > x) \leq \exp\Big(-\int_0^x \theta(z\slash T) \diff{z} \Big) = \exp\Big(-T \int_0^{x\slash T} \theta(z) \diff{z} \Big).
\end{equation}
Defining $\beta \coloneqq \delta(\sigma^2 + \int_{-\alpha}^{\alpha} x^2 \, \Pi(\diff{x}))$ and letting $T(p) > 0$ be large enough so that $\sqrt{\beta \log T^p \slash T} \in (0,\varepsilon)$ for all $t \geq T(p)$, it follows from \eqref{eq: subord1} and \eqref{eq: subord2} that indeed
\begin{align*}
\PP^0\big(X_T > \sqrt{\beta T \log (T^p)}\big) &\leq \exp\Big(-T \int_0^{\sqrt{\beta \log (T^p)\slash T}} \theta(z) \diff{z}\Big)\\
&\leq \exp\Big(-\frac{T}{\beta} \int_0^{\sqrt{\beta \log (T^p)\slash T}} z \diff{z}\Big)\\
&= T^{-p/2}.
\end{align*}
\end{proof}

With this preparation we can now investigate convergence rates of $\hat{f}_T$.

\begin{theorem}\label{theo: rate time}
Let $\gamma \in \mathcal{C}^2(\R)$ such that $\gamma^\prime$ is bounded.
\begin{enumerate}[label = (\roman*),ref = (\roman*)]
\item Suppose that $\E^0[\lvert X_1 \rvert^{p}] < \infty$ for some $p \geq 2$. Then,
$$\mathcal{R}^D_\infty\big(\hat{f}_T, f\big) \in \mathsf{O}\Big(T^{-\frac{1}{2(1+1\slash p)}}\Big).$$
In particular, if all moments of $X_1$ exist, then, for any $\varepsilon > 0$, 
$$\mathcal{R}^D_\infty\big(\hat{f}_T, f\big) \in \mathsf{O}\Big(T^{-\frac{1}{2+\varepsilon}}\Big).$$ \label{theo: rate time1}
\item Suppose that $\X$ has bounded jumps. Then, for $T$ large enough, it holds that 
$$\mathcal{R}^D_\infty(\hat{f}_T,f) \lesssim \sqrt{\frac{\log T}{T}}.$$
\end{enumerate}
\end{theorem}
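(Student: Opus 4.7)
The plan is to derive both parts from Lemma~\ref{lemma: rate time}, which reduces the problem to bounding the deviation probability $\PP^0(|X_T/T - \eta| > \varepsilon)$ and subsequently choosing $\varepsilon = \varepsilon_T$ in an optimal way. The qualitative difference between the two regimes then reflects how fast this deviation probability can be made to decay.

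For part (i), I would first observe that $M_t \coloneqq X_t - \eta t$ is a centered L\'evy process, hence a martingale, with $\E^0[\lvert M_1\rvert^p] < \infty$. The key analytical input is then the moment bound
\[
\E^0\bigl[\lvert X_T - \eta T\rvert^p\bigr] \;\lesssim\; T^{p/2}, \qquad T \geq 1,
\]
which for $p\geq 2$ is a consequence of the Burkholder--Davis--Gundy inequality combined with the control of the quadratic variation $[M]_T = \sigma^2 T + \sum_{s \leq T}(\Delta X_s)^2$ via Kunita's estimate (alternatively, this can be seen as a Marcinkiewicz--Zygmund bound applied to the i.i.d.\ increments of $\mathbf{X}$). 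Markov's inequality then yields
\[
\PP^0\Bigl(\Bigl\lvert \tfrac{X_T}{T} - \eta \Bigr\rvert > \varepsilon \Bigr) \;\lesssim\; \frac{1}{\varepsilon^p T^{p/2}},
\]
and plugging this into Lemma~\ref{lemma: rate time} gives
\[
\mathcal{R}^D_\infty(\hat f_T, f) \;\lesssim\; \frac{1}{\sqrt{T}} + \varepsilon + \frac{1}{\varepsilon^p T^{p/2}}.
\]
I would then balance the last two summands by setting $\varepsilon^{p+1} = T^{-p/2}$, i.e.\ $\varepsilon_T = T^{-p/(2(p+1))}$. Since $p/(2(p+1)) < 1/2$, the $T^{-1/2}$ term is dominated, and the bound becomes $T^{-p/(2(p+1))} = T^{-1/(2(1+1/p))}$, matching the claim. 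Letting $p \to \infty$ yields the rate $T^{-1/(2+\varepsilon)}$ under finiteness of all moments.

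For part (ii), since $\mathbf{X}$ has bounded jumps, the recentered process $\widetilde{\X} = \X - \eta\,\mathrm{id}$ is a non-trivial zero-mean L\'evy process with bounded jumps, so Theorem~\ref{theo: bounded jumps} applies. For a fixed exponent $q \geq 1$ and $T$ large enough,
\[
\PP^0\bigl(\lvert X_T - \eta T\rvert > \sqrt{\beta T\log(T^q)}\bigr) \;\leq\; 2T^{-q/2}.
\]
Choosing $\varepsilon_T \coloneqq \sqrt{q\beta\log T / T}$ gives $\varepsilon_T/\eta \asymp \sqrt{\log T/T}$ and a tail bound of order $T^{-q/2}$. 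Fixing $q$ large enough (say $q \geq 1$) so that the polynomial tail is dominated by $\sqrt{\log T/T}$, Lemma~\ref{lemma: rate time} produces the desired $\sqrt{\log T/T}$ rate.

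The main obstacle is the moment bound $\E^0[\lvert X_T - \eta T\rvert^p] \lesssim T^{p/2}$ underlying (i); once this is secured, the remainder is an optimization with respect to $\varepsilon$. The bounded-jumps part (ii) is almost immediate given Theorem~\ref{theo: bounded jumps}, the only subtlety being the recentering required to bring $\X$ into the zero-mean framework of that theorem.
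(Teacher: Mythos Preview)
Your proposal is correct and follows essentially the same approach as the paper: both parts are deduced from Lemma~\ref{lemma: rate time}, with (i) using Burkholder--Davis--Gundy on the centered martingale $X_t-\eta t$ plus Markov's inequality and the choice $\varepsilon_T=T^{-p/(2(p+1))}$, and (ii) applying Theorem~\ref{theo: bounded jumps} to the recentered process with $\varepsilon_T\asymp\sqrt{\log T/T}$. The only cosmetic difference is that you derive $\varepsilon_T$ in (i) by balancing two terms, whereas the paper simply inserts the same choice directly.
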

\begin{proof}
\begin{enumerate}[leftmargin= *,label = (\roman*),ref = (\roman*)]
\item Since $\E^0[\lvert X_1 \rvert^{p}] < \infty$, it follows from 
the Burkholder--Davis--Gundy inequality for the càdlàg martingale $\tilde \X \coloneqq (X_t-\eta t)_{t \geq 0}$ (cf.\ \cite[Theorem VII.92] {dellacherie1980}), that there exists $C_p \in (0,\infty)$ such that
$$\E^0\Big[\Big\vert \frac{X_T}{T} - \eta \Big\vert^{p}\Big] = \frac{1}{T^{p}}\E^{0}[\lvert X_T - \eta T\rvert^{p}] \leq C_p \frac{1}{T^{p}} \E^0\big[[ \tilde \X ]_T^{p/2}\big] = C_p \mathrm{Var}(X_1)^{p/2} T^{-p/2},$$
Here, $([\tilde \X]_t)_{t \geq 0}$ denotes the quadratic variation of $\tilde \X$. Hence, by Markov's inequality, it follows that 
\begin{equation} \label{eq: rate time1}
\PP^0\Big(\Big\vert \frac{X_T}{T} - \eta \Big\vert > T^{-1\slash (2(1 + p^{-1}))} \Big) \leq C_p\mathrm{Var}(X_1)^{p/2}T^{p\slash (2(1+p^{-1}))}T^{-p/2} = C_p\mathrm{Var}(X_1)^{p/2} T^{-1/(2(1+p^{-1}))}.
\end{equation}
Plugging $\varepsilon = T^{-1\slash (2(1 + p^{-1}))}$ into \eqref{eq: rate time} and using \eqref{eq: rate time1}, we conclude that 
$$\mathcal{R}^D_\infty\big(\hat{f}_T, f\big) \in \mathsf{O}(T^{-1\slash (2(1+p^{-1}))}).$$
\item Since $(X_t-\eta t)_{t \geq 0}$ is a zero mean Lévy process with bounded jumps, it follows from Theorem \ref{theo: bounded jumps} that there exists some constant $\beta > 0$ such that for $T$ large enough 
\begin{equation} \label{eq: subord3}
\PP^0\Big(\Big\vert \frac{X_T}{T} - \eta\Big\vert > \sqrt{\frac{\beta \log T}{T}} \Big) = \PP^0\big(\lvert X_T - \eta T\vert > \sqrt{\beta T \log T}\big)) \leq \frac{2}{\sqrt{T}}.
\end{equation}
Thus, plugging in $\varepsilon = \sqrt{\log T \slash T}$ into \eqref{eq: rate time} gives the result.
\end{enumerate}
\end{proof}
\begin{remark} \label{rem: rate time}
\begin{enumerate}[leftmargin=*,label=(\roman*),ref=(\roman*)]
\item Since our exponential $\beta$-mixing assumption requires flat tails of $\Pi$ at $+\infty$ and moreover $\E^0[X_1] > 0$, the assumption of exponential moments is quite natural in our modelling framework. When jumps are bounded, \ref{ass: exp} is always satisfied. Hence, for most L\'evy processes falling into our estimation regime, we can expect a convergence rate of approximately $1\slash \sqrt{T}$.
\item One may wonder whether there was anything to gain, if in the definition of $\hat{f}_T$, we replaced $X_T$ by the running supremum $\overbar{X}_T$. In practice, this would be more natural since otherwise---at least intuitively---data $(\cO_t)_{X_T < t \leq \overbar{X}_T}$ was wasted and moreover the estimator becomes meaningless whenever $X_T \leq 0$ (which, as time progresses becomes increasingly unlikely). The construction of our estimator on the other hand is driven by analytical tractability.  However, in terms of the convergence rate of the estimator we cannot expect to gain much by working with the running supremum. This is evident from observing that Doob's maximal inequality for the submartingale $\X$ yields that for any $p > 1$ s.t.\ $X_1 \in L^p(\PP^0)$ and $T \geq 1$,
$$\lVert X_T \rVert_{L^p(\PP^0)} \leq \Vert \overbar{X}_T \Vert_{L^p(\PP^0)} \leq \frac{p}{p-1} \lVert X_T \rVert_{L^p(\PP^0)}.$$
\end{enumerate}
\end{remark}

Let us interpret this result in detail from a nonparametric angle and, as announced at the beginning of this section, compare our estimator $\hat{f}_T$ to the plug-in estimator given in \eqref{est alt} for the restricted setting of subordinators $\X$ with strictly positive drift $d_X > 0$ and absolutely continuous Lévy measure $\Pi$ with bounded density $\pi$, for which the latter can be applied.

For the subordinator case, the plug-in estimator has an $L^2$-convergence rate of $1\slash \sqrt{T}$. As shown in Theorem \ref{theo: rate time}, the overshoot estimator converges at rate $\sqrt{\log T \slash T}$ with respect to the $\lVert \cdot \rVert_\infty$-norm for any given Lévy process with bounded jumps satisfying \ref{ass: upward} and \ref{ass: ergodic} and hence in particular for any subordinator with Lévy measure having bounded support (but not necessarily bounded density since infinite jump activity is allowed). It is well-known from nonparametric  invariant density estimation of well-behaved scalar stochastic processes that, within a continuous observation scheme, the invariant density can be estimated with the parametric rate $1\slash \sqrt{T}$ wrt the $L^2$-norm. Estimation wrt the $\sup$-norm on the other hand introduces an additional $\log$-factor, increasing the optimal rate to $\sqrt{\log T \slash T}$, as, e.g., in the previously discussed case of scalar ergodic diffusions, see Theorem \ref{theo:invdens}. 

Thus, in the current nonparametric estimation context we observe the same phenomenon that the common price to be paid is an additional log-factor for optimal estimation with respect to the $\sup$-norm compared to the optimal $L^2$-rate. This also indicates that in principle, our approach to find an upper bound on the convergence rate of the overshoot estimator via Proposition \ref{prop: rate level} and Lemma \ref{lemma: rate time} for a time-dependent observation scheme is tight enough to establish the optimal convergence rate $\sqrt{\log T \slash T}$ for more general Lévy processes with unbounded jumps. This is evident from observing that the key result for the proof of Theorem \ref{theo: bounded jumps} is Lemma 26.4 from \cite{sato2013}, which relies on a Chernoff bound for the upper tail of a Lévy process at some fixed time $T$. However, interpreting this bound rigorously requires being able to tightly control the asymptotic behaviour of the inverse of the Laplace exponent's derivative, which for general Lévy processes is not possible. This is why we made use of Markov's inequality with power functions in the proof of part \ref{theo: rate time1} of Theorem \ref{theo: rate time} instead of the generic Chernoff bound. However, for more particular classes of Lévy processes with explicit Laplace exponent, an ansatz similar to Theorem \ref{theo: bounded jumps} may also provide the optimal convergence rate.

\subsection{Application}\label{sec: data levy}
We now return to the control problem described at the beginning of this section. 
In the following, we still assume \ref{ass: upward} -- \ref{ass: exp} and now present the main tool for our analysis, an auxiliary function $f$ defined via
$$f(x)\coloneqq 
\mathcal{A}_H \gamma(x),$$ 
where $\mathcal{A}_H$ denotes the extended generator of the ladder height process $\mathbf{H}$ of $\X$ as discussed in Section \ref{subsec: ascending}. Noting that when $\gamma \in \mathcal{C}^2_0(\R)$, Dynkin's formula and the fact that the values of $\X$ and $\mathbf H$ coincide at first hitting times almost surely, imply that
\[f(x)=\lim_{\varepsilon\downarrow  0}\frac{\E^x[\gamma(X_{T_{x+\varepsilon}})]-\gamma(x)}{\E^x[T_{x+\varepsilon}]},\]
this generates an intuition why this function is suitable for the analysis of problem \eqref{eq:value_levy}: using the theory of regenerative processes, see \cite{MR1978607}, the value
$\tfrac{\E^x[\gamma(X_{T_{x+\varepsilon}})]-\gamma(x)}{\E^x[T_{x+\varepsilon]}}$
coincides with the value of the $(s,S)$ impulse strategy which shifts the process back to $x=s$ whenever the process is above $S=x+\varepsilon$, so that---at least intuitively---$f(x)$ corresponds to the value of the reflection strategy in $x$. 
The usefulness of this approach for ergodic impulse control problems is demonstrated in \cite{christensen2020solution}, where most of the following results can be found. 
Some further complementing analysis is carried out in \cite{PhD_Tobias}. 
The main observation is that properties of the function $f$ determine the form of the optimal solution. 
For our considerations, we assume the following:
\begin{enumerate}[leftmargin=*,label=($\mathscr{H}3$),ref=($\mathscr{H}3$)] 
\item The function $f$ has a unique maximum $\maxlevy^\ast \in \R$, is strictly increasing on $\left(-\infty, \maxlevy^\ast\right]$ and strictly decreasing on $[\maxlevy^\ast,\infty)$. \label{ass: f max}
\end{enumerate}

\subsubsection{Solution for known processes using an auxiliary impulse control problem}
In \cite{christensen2020solution}, different classes of functions $\gamma$ are discussed that make \ref{ass: f max} hold for all L\'evy processes $\X$. 
The main idea for analysing the problem \eqref{eq:value_levy} is to introduce artificial fixed costs $K$ for each interaction, so that we are faced with a problem where we expect stationary impulse control strategies of $(s,S)$-type to be optimal. By considering the solutions for $K\searrow0$, we then obtain the value and an optimal strategy for the problem without costs. 
More precisely, for each $K\geq 0$, we define 
\begin{align*} &v(K)\coloneqq\sup_{S}\liminf_{T\rightarrow\infty} \frac{1}{T}\E^x\left[\sum_{n:\tau_n\leq T}\left(\gamma\left(X^S_{\tau_n,-}\right)-\gamma\left(\zeta_n\right)-K\right)\right]\label{vKdefinition},\end{align*}
where the supremum is taken over all admissible impulse control strategies $S=\left(\tau_n,\zeta_n\right)$. By elementary arguments, it is immediately seen that $v(K)$ is independent of the initial state $x$. 
To study the dependence on the fixed costs $K\geq 0$, let us shortly review the key results on long-term average impulse control problems. 
\begin{lemma}[ \cite{PhD_Tobias}, Theorem 4.3.6]
	\begin{enumerate}[leftmargin=*,label=(\roman*),ref=(\roman*)] 
		\item For all\ $K\geq 0$
		\begin{align*}
			v(K)=\sup_{x^*,{{\overbar x}\in \R,\,x^*<{{\overbar x}}}}\frac{\E^{x^\ast}\left[\gamma\left(X_{T_{\overbar x}}\right)\right]-\gamma\left({x^*}\right)-K}{\E^{x^\ast}[{T_{\overbar x}}]}.
		\end{align*}
		\item
		If $K>0$, then an $(s,S)$ strategy of the form 
		\[\tau_n=\inf\{t\geq \tau_{n-1}:X_t\geq \overbar x_K\},\;\;\zeta_n=x^*_K,\]
		is optimal. 
		The values $\overbar x_K$ and $x^*_K$ are given as follows: $\overbar x_K$ is the larger of the two roots of the equation $$f(x)=v(K).$$ If we denote the lower one by $\underbars x_K$, then $x^*_K$ is given as the maximizer $x^*_K=y\in [\underbars x_K, \overbar x_K]$ of $$ \frac{\E^y\big[\gamma\big(X_{T_{\overbar x_K}}\big) \big]-\gamma\left(y\right)-K}{\E^y[{T_{\overbar x_K}}]}.$$  
	\end{enumerate}
\end{lemma}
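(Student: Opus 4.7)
The plan is to combine renewal-reward computations for stationary $(s,S)$-type strategies with the maximum representation of the cycle payoff through the auxiliary generator functional $f=\mathcal{A}_H\gamma$, and then to rule out better strategies by a verification argument.

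For part (i), fix $x^*<\overbar x$ and consider the $(s,S)$-strategy $\tau_n=\inf\{t\geq\tau_{n-1}:X_t\geq\overbar x\}$, $\zeta_n=x^*$. The controlled process is regenerative with i.i.d.\ cycle lengths distributed as $T_{\overbar x}$ under $\PP^{x^*}$, whose finite expectation is guaranteed by \ref{ass: upward}. The renewal reward theorem (e.g.\ \cite[Chapter VI, Theorem 1.2]{MR1978607}) gives
\[\lim_{T\to\infty}\frac{1}{T}\E^{x}\Big[\sum_{n:\tau_n\leq T}(\gamma(X^S_{\tau_n,-})-\gamma(\zeta_n)-K)\Big]=\frac{\E^{x^*}[\gamma(X_{T_{\overbar x}})]-\gamma(x^*)-K}{\E^{x^*}[T_{\overbar x}]},\]
independently of the starting point $x$, so $v(K)$ dominates the supremum on the right. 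The reverse inequality is the verification direction: construct a candidate value function $w$ from $f$ and $v(K)$, apply It\={o}'s formula for semimartingales to $w(\mathbf X^S)$ for an arbitrary admissible $S$, control the jumps at intervention times by the term $\gamma(X^S_{\tau_n,-})-\gamma(\zeta_n)-K$ appearing in the reward, take $\PP^x$-expectations, divide by $T$ and pass to $\liminf$, using the finiteness of $\E^0[X_1]$ and the regenerative structure to handle remainder terms.

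For part (ii), the essential identity linking the numerator to $f$ is
\[\E^{x^*}[\gamma(X_{T_{\overbar x}})]-\gamma(x^*)=\E^{x^*}\Big[\int_0^{\tau^H_{\overbar x}}f(H_s)\diff s\Big],\]
which follows from Dynkin's formula applied to $\mathbf H$, combined with the indistinguishability of the range of $\mathbf H$ and the supremum process of $\mathbf X$, giving $X_{T_{\overbar x}}=H_{\tau^H_{\overbar x}}$ for $\tau^H_{\overbar x}\coloneqq\inf\{t\geq 0:H_t>\overbar x\}$. The scaling $\E^0[\mathsf L_1^{-1}]=1$ together with a Wald-type identity for the subordinator $\mathsf L^{-1}$ evaluated at the $\F^H$-stopping time $\tau^H_{\overbar x}$ yields $\E^{x^*}[T_{\overbar x}]=\E^{x^*}[\tau^H_{\overbar x}]$, so the ratio in (i) equals a time-weighted average of $f$ along $\mathbf H$ minus $K/\E^{x^*}[\tau^H_{\overbar x}]$. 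Under \ref{ass: f max}, the super-level set $\{f\geq v(K)\}$ is an interval $[\underbars x_K,\overbar x_K]$, and strict monotonicity of $f$ on either side of its unique maximum $\maxlevy^\ast$ forces the optimal $\overbar x$ to coincide with $\overbar x_K$: moving $\overbar x$ beyond $\overbar x_K$ adds sojourn time in a region where $f<v(K)$, while moving it below $\overbar x_K$ shrinks the region where $f\geq v(K)$, both strictly decreasing the ratio. Optimality of $x^*_K\in[\underbars x_K,\overbar x_K]$ then follows by one-dimensional maximisation of the cycle reward $y\mapsto (\E^y[\gamma(X_{T_{\overbar x_K}})]-\gamma(y)-K)/\E^y[T_{\overbar x_K}]$, exploiting the maximum representation once more.

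The main obstacle is the verification step in (i): for a Lévy process the generator $\mathcal A_X$ is non-local, so the candidate Bellman solution must be constructed as a sufficiently regular solution of an integro-differential variational inequality with interface conditions at $\overbar x_K$ encoding the reset to $x^*_K$, and the resulting martingale/quasi-martingale terms have to be controlled uniformly in the strategy as $T\to\infty$. A secondary, milder issue is passing from $K>0$ to $K=0$ in (i), which follows from monotonicity of $K\mapsto v(K)$ and the explicit supremum representation via dominated convergence.
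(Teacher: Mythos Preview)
The paper does not supply its own proof of this lemma: it is imported from \cite{PhD_Tobias}, Theorem 4.3.6, and used as a black box in Section \ref{sec: data levy}. There is therefore nothing in the present paper to compare your proposal against.

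That said, your outline follows the standard route for ergodic impulse control of L\'evy processes and is consistent with how the paper motivates the role of $f=\mathcal{A}_H\gamma$: renewal--reward for $(s,S)$ strategies delivers one inequality in (i); the Dynkin representation of the cycle payoff through $f$ combined with the local-time scaling $\E^0[\mathsf{L}^{-1}_1]=1$ (so that $\E^{x^\ast}[T_{\overbar x}]=\E^{x^\ast}[\tau^H_{\overbar x}]$) reduces the ratio to a time-average of $f$ along $\mathbf{H}$; and a verification argument via a candidate Bellman function closes (i) from above. The structure is sound, but the hard step---building the verification function from $f$ and $v(K)$ and controlling the non-local martingale remainder uniformly over admissible strategies---is stated rather than resolved, as you yourself note. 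Your monotonicity heuristic for $\overbar x=\overbar x_K$ in (ii) is also at present only an intuition; a rigorous argument would compute the derivative of the cycle ratio with respect to the threshold via the integral representation and show it changes sign precisely at $\overbar x_K$.
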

Under additional assumptions on the L\'evy process, it turns out that $x^*_K=\underbars x_K$ which simplifies the solution of the impulse control problems, but is not needed for our purposes. We now study the dependence of $v(K)$ on\ $K$.

\begin{theorem}[\cite{PhD_Tobias}, Theorem 4.3.6, 5.3.3, 5.3.4, and 5.3.5.]
	In the singular control problem \eqref{eq:value_levy}, the following holds true:
	\begin{enumerate}[leftmargin=*,label=(\roman*),ref=(\roman*)] 
		\item	$v=f(\maxlevy^\ast)\;\;\;\left(=\max_{x\in\R} f(x)\right)$;
		\item $v(K)\nearrow v$ as $K\searrow 0$;
		\item The $(s,S)$ strategies with upper threshold $\overbar x_K$ and restarting point $x^*_K$, $K>0$, are $\varepsilon$-optimal for \eqref{eq:value_levy} as $K\searrow 0$;
		\item $\underbars x_K\nearrow \maxlevy^\ast$ and $\overbar x_K\searrow \maxlevy^\ast$ as $K\searrow 0$.
	\end{enumerate}
\end{theorem}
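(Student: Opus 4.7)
The plan is to reduce the problem to a single identity that expresses the long-run reward of an $(s,S)$-strategy with parameters $x^\ast<\overbar{x}$ through the auxiliary function $f=\mathcal{A}_H\gamma$. Applying Dynkin's formula to $\mathbf H$ at the ladder stopping time $\mathsf{L}_{T_{\overbar{x}}}$---the first instant at which $\mathbf H$ exceeds $\overbar{x}$---and using $X_{T_{\overbar{x}}}=H_{\mathsf{L}_{T_{\overbar{x}}}}$ from upward regularity \ref{ass: upward}, one obtains
\begin{equation*}
\E^{x^\ast}[\gamma(X_{T_{\overbar{x}}})]-\gamma(x^\ast)=\E^{x^\ast}\Big[\int_0^{\mathsf{L}_{T_{\overbar{x}}}} f(H_u)\diff u\Big].
\end{equation*}
Optional sampling applied to the martingales $X_t-\eta t$ and $H_t-\eta t$---which share the drift $\eta$ thanks to the scaling $\E^0[\mathsf{L}_1^{-1}]=1$---gives $\E^{x^\ast}[T_{\overbar{x}}]=\E^{x^\ast}[\mathsf{L}_{T_{\overbar{x}}}]$. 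Plugging both relations into the variational representation from the preceding lemma,
\begin{equation*}
v(K)=\sup_{x^\ast<\overbar{x}}\bigg(\frac{\E^{x^\ast}\big[\int_0^{\mathsf{L}_{T_{\overbar{x}}}} f(H_u)\diff u\big]}{\E^{x^\ast}[\mathsf{L}_{T_{\overbar{x}}}]}-\frac{K}{\E^{x^\ast}[T_{\overbar{x}}]}\bigg).
\end{equation*}

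For part \emph{(i)}, since $\mathbf H$ remains in $[x^\ast,X_{T_{\overbar{x}}}]$ throughout the integration, the occupation ratio on the right is bounded above by $\sup_y f(y)=f(\maxlevy^\ast)$; taking $K=0$ yields $v\leq f(\maxlevy^\ast)$. Conversely, inserting $x^\ast=\maxlevy^\ast-\delta$, $\overbar{x}=\maxlevy^\ast+\delta$ and sending $\delta\downarrow 0$, continuity of $f$ pushes the ratio up to $f(\maxlevy^\ast)$, producing the matching lower bound. For part \emph{(ii)}, the map $K\mapsto v(K)$ is non-increasing (adding a fixed cost can only reduce utility) with $v(0)=v$ by definition; the same $\delta$-family of $(s,S)$-strategies evaluated at small $K>0$ delivers $\liminf_{K\downarrow 0}v(K)\geq v$, upgrading monotonicity to convergence.

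Part \emph{(iii)} follows as an immediate corollary: under the $K=0$ reward functional, the value of the strategy $(x^\ast_K,\overbar{x}_K)$ is at least $v(K)$ (the fictitious cost is no longer paid), and by \emph{(ii)} this quantity tends to $v$, so the strategy is $\varepsilon$-optimal as $K\downarrow 0$. For part \emph{(iv)}, the preceding lemma characterises $\underbars{x}_K$ and $\overbar{x}_K$ as the two roots of $f(x)=v(K)$; combining $v(K)\nearrow f(\maxlevy^\ast)$ from \emph{(ii)} with the strict unimodality of $f$ in \ref{ass: f max} forces the smaller root to climb monotonically to $\maxlevy^\ast$ from below and the larger root to descend monotonically to $\maxlevy^\ast$ from above.

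The delicate step is the rigorous justification of the Dynkin identity: $\mathcal{A}_H\gamma$ is only the \emph{extended} generator, so the associated process is merely a local martingale that has to be controlled up to the unbounded stopping time $\mathsf{L}_{T_{\overbar{x}}}$. Boundedness of $\gamma'$ together with \ref{ass: exp} and Wald's identity supplies integrability of $\mathsf{L}_{T_{\overbar{x}}}$ and $T_{\overbar{x}}$ and uniform bounds on $f$, so a standard truncation and dominated convergence argument upgrades the local martingale to a true martingale on $[0,\mathsf{L}_{T_{\overbar{x}}}]$. Reducing the supremum over all admissible impulse controls in \eqref{eq:value_levy} to the $(s,S)$-class used in the variational formula requires a separate regenerative argument in the spirit of \cite{christensen2020solution}.
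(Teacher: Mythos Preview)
The paper does not contain its own proof of this theorem; it simply states the result and cites \cite{PhD_Tobias} (Theorems 4.3.6, 5.3.3, 5.3.4, 5.3.5) as the source. There is therefore no in-paper argument to compare your proposal against.

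That said, your plan is sound and captures the essential mechanism: the Dynkin identity for $\mathbf H$ at its first-passage time $\mathsf L_{T_{\overbar x}}$ combined with the Wald-type identity $\E^{x^\ast}[T_{\overbar x}]=\E^{x^\ast}[\mathsf L_{T_{\overbar x}}]$ (a consequence of the normalisation $\E^0[\mathsf L^{-1}_1]=1$) converts the variational formula from the preceding lemma into an occupation-time average of $f$ along the ladder path. Parts (i)--(iv) then follow from the unimodality assumption \ref{ass: f max} exactly as you describe. One small simplification: your closing caveat about ``reducing the supremum over all admissible impulse controls to the $(s,S)$-class'' is already taken care of by part (i) of the preceding lemma, which asserts $v(K)=\sup_{x^\ast<\overbar x}\frac{\E^{x^\ast}[\gamma(X_{T_{\overbar x}})]-\gamma(x^\ast)-K}{\E^{x^\ast}[T_{\overbar x}]}$ for \emph{all} $K\ge 0$, so no additional regenerative argument is required at this stage. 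The genuine technical points you flag---upgrading the local martingale from the extended generator to a true martingale at the unbounded stopping time, and verifying integrability of $T_{\overbar x}$ and $\mathsf L_{T_{\overbar x}}$---are indeed what would need to be spelled out in a complete proof.
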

The previous results suggest that the reflection strategy at level $\maxlevy^\ast$ is optimal in problem \eqref{eq:value_levy}. However, this strategy does not directly fall into the class of impulse control strategies we consider here, but is of (strictly) singular type. In order not to overburden the paper with technicalities, we leave out the discussion of extending the strategy space here. Note however that, due to our ergodic problem formulation, extending the control space is even not needed to obtain optimizers for \eqref{eq:value_levy}: the (non-stationary) threshold strategy with time dependent thresholds $x^*_{K_T}$ and $\overbar x_{K_T}$ (with $K_T\searrow 0$ as $T\nearrow \infty$) is optimal in the class of impulse strategies and converges with arbitrary speed in $T$ by choosing sufficiently small costs $K_T,\,T\geq0$. Therefore, the term `reflection strategy' refers to a suitably fast approximating impulse strategy in the following. 

\subsubsection{Data-driven singular controls}
The results in Section \ref{subsec: ascending} now directly lead to a method for estimating the optimal reflection boundary $\maxlevy^*$: after having observed the underlying L\'evy process for $T$ time units, we define the estimator for the auxiliary function $f$, using the estimator $\hat{f}_T$ defined in \eqref{eq:tilde_f}, and then choose
\begin{equation}\label{eq:strategy levy}
\ao\in\arg\max_{\maxlevy\in D}\hat{f}_T(\maxlevy),
\end{equation}
where $D$ is some arbitrary bounded, open neighborhood of $\theta^\ast$. The results from Section \ref{subsec: ascending} now yield that the estimated optimizer gives the optimal value  $v=f(\maxlevy^\ast)$ up to a regret of order $T^{-1/(2(1+1/p))}$ when the $p$-th moment of $\X$ exists and of order $\sqrt{\log T \slash T}$ when jumps of $\X$ are bounded. 
Indeed:

\begin{theorem}
Let $D$ be a bounded open neighborhood of $\theta^\ast$. Suppose that $\E^0[\lvert X_1 \rvert^{p}] < \infty$ for some $p \geq 2$. Then, it holds that
\[
\E^0\left[v-f(\ao)\right]\in \mathsf{O}\Big(T^{-\frac{1}{2(1+1/p)}}\Big).
\]
If $\X$ has bounded jumps, then 
\[\E^0\left[v-f(\ao)\right]\in \mathsf{O}\Big(\sqrt{\tfrac{\log T}{T}}\Big).\]
\end{theorem}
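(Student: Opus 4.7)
The argument is the classical plug-in/argmax sandwich, combined with the sup-norm risk bounds for $\hat{f}_T$ already established in Theorem \ref{theo: rate time}.

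First, note that $D$ is an open bounded neighbourhood of $\theta^\ast$, so $\theta^\ast \in D$, and by construction $\hat{\theta}_T \in \arg\max_{\theta \in D} \hat{f}_T(\theta)$. Since $v = f(\theta^\ast) = \max_{x \in \R} f(x)$ by \ref{ass: f max}, we have $v \geq f(\hat{\theta}_T)$ and therefore
\begin{align*}
0 \leq v - f(\hat{\theta}_T)
&= \bigl(f(\theta^\ast) - \hat{f}_T(\theta^\ast)\bigr) + \bigl(\hat{f}_T(\theta^\ast) - \hat{f}_T(\hat{\theta}_T)\bigr) + \bigl(\hat{f}_T(\hat{\theta}_T) - f(\hat{\theta}_T)\bigr).
\end{align*}
The middle term is non-positive by the defining property of $\hat{\theta}_T$, and the first and third are bounded in absolute value by $\lVert \hat{f}_T - f\rVert_{D,\infty}$. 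This yields the deterministic inequality
\begin{align*}
0 \leq v - f(\hat{\theta}_T) \leq 2 \lVert \hat{f}_T - f\rVert_{D,\infty}.
\end{align*}

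Taking expectation under $\PP^0$ and invoking the definition $\mathcal{R}^D_\infty(\hat{f}_T,f) = \E^0[\lVert \hat{f}_T - f\rVert_{D,\infty}]$, we obtain
\begin{align*}
\E^0[v - f(\hat{\theta}_T)] \leq 2\, \mathcal{R}^D_\infty(\hat{f}_T,f).
\end{align*}
The assumed $\gamma \in \mathcal{C}^2(\R)$ (used already in the definition of $f = \mathcal{A}_H \gamma$) together with boundedness of $\gamma'$ (which is part of the standing framework in Section \ref{subsec: ascending}) ensures that Theorem \ref{theo: rate time} applies verbatim on the bounded open set $D$. Part \ref{theo: rate time1} of Theorem \ref{theo: rate time} therefore gives the rate $T^{-1/(2(1+1/p))}$ under the $p$-th moment condition on $X_1$, and the second part gives the rate $\sqrt{\log T / T}$ in the bounded-jumps case.

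The whole argument is thus almost a one-liner conditional on the preceding work; no genuine obstacle arises here. The entire statistical and probabilistic content is concentrated in Theorem \ref{theo: rate time} (and, transitively, in Proposition \ref{prop: rate level} and Lemma \ref{lemma: rate time}): the sup-norm framework is precisely designed so that regret bounds for plug-in argmax estimators follow by the sandwich above. The only point to double-check is that $\theta^\ast \in D$ so that the maximum of $\hat{f}_T$ on $D$ is a legitimate approximation of the global maximum of $f$—this is guaranteed by the hypothesis that $D$ is a neighbourhood of $\theta^\ast$ and the uniqueness/shape assumption \ref{ass: f max}.
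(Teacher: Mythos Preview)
Your proof is correct and follows exactly the same approach as the paper: the argmax sandwich inequality $0 \leq v - f(\hat{\theta}_T) \leq 2\lVert \hat{f}_T - f\rVert_{D,\infty}$, followed by taking expectations and invoking Theorem~\ref{theo: rate time}. The paper's own proof is essentially a one-line reference to the analogous computation in Proposition~\ref{prop:nonparam_estimator}, so your written-out version is even slightly more explicit.
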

\begin{proof}
	As in the proof of Proposition \ref{prop:nonparam_estimator}, we obtain 
	\begin{align*}
		v -f(\ao)
		&\leq 2\max_{x\in D}\left|f(x)-\hat{f}_T(x)\right|,
	\end{align*}
hence
\[\E^0\left[v-f(\ao)\right]\leq 2\mathcal{R}^D_\infty(\hat{f}_T,f),\]
yielding the result by Theorem \ref{theo: rate time}.
\end{proof}


%

\section{Discussion}
The statistical questions discussed in this paper have a clear motivation coming from the analysis of data-driven strategies for natural classes of stochastic control problems. 
For underlying diffusion processes, the solutions to ergodic singular control problems from Section \ref{subsec:control_diffusion} can be written in terms of the invariant density, such that the key to developing data-driven strategies consists in replacing this quantity by a sample-based analogue.
From a statistical perspective, this is advantageous because rate-optimal estimation in this case (as opposed to, e.g., estimation of the drift coefficient) does not require an adaptive choice of the bandwidth. 
Due to the costs for reflection, the error measure to be used is the $\sup$-norm risk studied in Section \ref{subsec: diff station}. This is an interesting observation as for the\textemdash from the stochastic control perspective highly related\textemdash impulse control problem investigated in \cite{christensen20}, the $L^1$ risk had to be analysed.
The substantially more involved issue of bounding the $\sup$-norm risk of estimators is tackled by means of Proposition \ref{prop:mix}, exploiting mixture properties of the diffusion process. 
Since the focus of this paper is on the development of concrete control strategies, we have restricted the presentation in Section \ref{subsec: diff station} to a concise proof of the required upper bound (see Theorem \ref{theo:invdens}).
Once again, we refer to Section \ref{app:diff}, where we describe a completely self-contained derivation of the convergence rate for diffusions with drift $b\in\boldsymbol\Sigma_D(\beta)$ (cf.~\eqref{def:tildesigma}).  
In particular, it does not rely on any results for diffusion local time and can also be extended in a straightforward way to higher dimensions. 
We have reduced our explicit statistical investigation to the one-dimensional case solely because of the intended application to the stochastic control problem. 
Compared to the $L^1$ risk, the evaluation of the $\sup$-norm risk produces a well-known unavoidable logarithmic factor, which is also reflected in the expected regret per time unit (Theorem \ref{thm:main_}).

While the underlying diffusion processes in Section \ref{sec:diff} were assumed to have an ergodic behaviour allowing for a statistical analysis, this is not the case for the L\'evy-driven problem introduced in \ref{sec: data levy}. By considering a space-time transformation of the L\'evy process $\X$ in form of the overshoot process $\bm{\cO}$, we obtained an ergodic Markov process fitting right into our general modeling framework, which allows to express the quantity of interest for the singular control problem, $f = \mathcal{A}_H \gamma$, as an integral w.r.t.\ its invariant distribution. 
Combining a simple mean estimator based on an overshoot sample with classical results on the long-time behaviour of L\'evy processes then allowed us to construct an estimator whose performance depends on the tail-behaviour of $\X$ and yields an almost parametric $\sup$-norm estimation rate in case of light tails and the optimal nonparametric rate $\sqrt{\log T / T}$ when jumps are bounded. 

Based on this estimation procedure, we were then able to identify a data-driven singular control strategy, such that the estimated optimal reflection boundary yields an expected regret of the same order as the nonparametric estimation of the auxiliary function $f$.

In contrast to the diffusion case, in the L\'evy process framework we are not faced with an exploration vs.~exploitation problem: due to the spatial homogeneity of L\'evy processes, each controlled process carries the same information as the uncontrolled one (if we assume that the decision maker has access to the values $X_{\tau_n,-}^S$) as the decision maker can reconstruct an uncontrolled path by just undoing the controls.
Therefore, the following greedy strategy can be applied without additional losses: 
we use the (approximate) reflection controls with time-dependent boundary 
\[\ao\in\arg\max_{\maxlevy\in D}\hat{f}_T(\maxlevy)\]
for each time point $T$.

Finally, let us briefly outline the connection to related research fields.
The exploration vs.~exploitation trade-off encountered in Section \ref{sec:diff} is also well-known from the famous multi-armed bandit problem.
In this regard, it is interesting to observe that the number of boundaries to be estimated in the control problems in our context does not influence the rate of convergence. 
Up to the logarithmic factor coming from the $\sup$-norm vs.~$L^1$ risk discussed above, the rates of convergence indeed turn out to be the same for the two-sided problem studied here and the one-sided problem from \cite{christensen20}. 
This is in strong contrast to the related results for $\mathcal X$-armed bandit problems, see \cite{locatelli2018adaptivity,bubeck2011x}.

From a more applied point of view, it is furthermore of interest to compare the data-driven procedure proposed here to results obtained by using established methods from (deep) reinforcement learning.
These algorithms are very generally applicable, as they usually only require the presence of a Markovian decision process setting. 
For classical methods such as the regular Q-learning algorithm, very robust convergence results exist; however, the latter is not practicable for problems in which the state space is too large. 
In the stochastic control setting considered here, a very large state space cannot be avoided, and a natural approach for circumventing this obstacle is to treat the problem based on the Q-learning algorithm with function approximation.
In this respect, the fusion with neural networks has proven to be particularly powerful.
A mathematical theory of convergence for the resulting deep reinforcement learning procedures however is still under development.
Results from recent contributions such as \cite{fan20} are very interesting, but there remains a large gap between their theoretical assumptions and the Markov decision process framework that emerges for our concrete control problems. 
It seems practically impossible to apply their general convergence statements for deep Q-learning to our concrete setting such that one is forced to fall back on purely empirical tests of the algorithms.
By way of contrast, our statistically driven method allows for a thorough theoretical analysis and yields rules that are both interpretable and explainable.

Regarding the practical implementation, we do not give a detailed numerical comparison here as this strongly depends on the exact framework, but just mention that in our scenarios both approaches learn the optimal rule reasonably well, where the statistical approach is (not surprisingly) faster and for a longer time horizon very accurate.

\section*{Acknowledgement}
CS gratefully acknowledges financial support of Sapere Aude: DFF-Starting Grant 0165-00061B ``Learning diffusion dynamics and strategies for optimal control''. LT was supported by Research Training Group ``Statistical Modeling of Complex Systems'' funded by the German Science Foundation.

\begin{appendix}
\section{Supplementary material for Section \ref{subsec: diff station}} \label{app:diff}
As announced in the main part, we will now describe the foundations of the statistical analysis of the $\sup$-norm risk in the scalar diffusion setting in a more detailed manner.
To be specific, consider a stationary diffusion $\X$ on $\R$, with drift $b \in \bm{\Sigma}$, Lipschitz continuous and differentiable diffusion coefficient $\sigma$ such that $\underbars \nu \leq \sigma \leq \overbar \nu$ for some $0< \underbars \nu \leq \overbar \nu< \infty$, invariant density given in \eqref{eq: inv dens diff} and transition densities $(p_t)_{t > 0}$. 

In the scalar framework, it is standard to exploit the properties of \emph{diffusion local time}, and the proof of the tight variance bound in Theorem \ref{theo:invdens} (see \eqref{eq:Qbound}) actually relies on a corresponding result.
However, this approach has the obvious drawback that it does not allow for a natural extension to higher dimensions. 
The following assumptions represent an alternative framework which can be adapted straightforwardly to dimension $d\ge2$ and, as will be demonstrated in the sequel, at the same time also yields optimal results in dimension $d=1$. 

\begin{enumerate}[leftmargin=*,label=($\mathscr{A}$\arabic*),ref=($\mathscr{A}$\arabic*)]
\item 
For any compact set $\mathcal{S}\subset\R$, there exists a non-negative, measurable function $r_{\mathcal{S}} \colon (1,\infty) \to \R_+$ such that
\begin{equation} \label{cond2:uni}
\forall t > 1: \quad \sup_{x,y \in \mathcal{S}} \lvert p_t(x,y) - \rho(y) \rvert \leq r_{\mathcal{S}}(t), \quad \text{with } r_{\mathcal{S}}(t) \to 0 \text{ fulfilling } \int_1^\infty r_{\mathcal{S}}(t) \diff{t} < \infty.
\end{equation}
\label{ass: conv bound}
\item 
There exists a non-negative, measurable function $\alpha\colon (0,1] \to \R_+$ such that, for any $t \in (0,1]$,
\[
\sup_{x,y} p_t(x,y) \leq \alpha(t) \quad \text{and} \quad \int_{0+}^1 \alpha(t) \diff{t} = c_1 < \infty.
\]
\label{ass: density bound}
\end{enumerate}

For the sake of clarity, we restrict our presentation to the one-dimensional case as it is studied throughout the paper. 
As mentioned, the above framework and subsequent results admit straightforward extensions to higher dimensions. We refer to Section 2.1 in \cite{dexheimer20} for the precise definition of the assumptions in the general case.

\paragraph{Verifying \ref{ass: conv bound} and \ref{ass: density bound}}
Let us emphasize that in the context of scalar diffusions, \ref{ass: density bound} is a rather mild condition, which can often be formulated as a heat kernel bound in the literature---that is, $\alpha(t) = \mathfrak ct^{-1/2}$ for some $\mathfrak c>0$---under general conditions on the coefficients $b,\sigma$. E.g., classical results from PDE theory yield that if $b$ is bounded, transition densities $(p_t)_{t \geq 0}$ for the semigroup of $\X$ exist and fulfill
$$p_t(x,y) \leq c_1 t^{-1\slash 2} \exp\Big(-c_2\frac{(x-y)^2}{t}\Big), \quad t> 0, x,y\in \R,$$
for some universal constants $c_1,c_2 > 0$, implying the above form of $\alpha$ with $\mathfrak c = c_1$, see, e.g., equation (1.2) in \cite{sheu1991}. In our framework, $b$ is potentially unbounded but satisfies the linear growth condition $\lvert b(x) \rvert \leq \C(1 + \lvert x \rvert)$. Theorem 3.2 in \cite{qian2004} therefore yields that \ref{ass: density bound} is indeed satisfied in our setting. More precisely:

\begin{lemma} \label{lem: heat est}
There exists some constant $\mathfrak{L} > 0$ such that, for any $t \in (0,1]$, we have 
$$\sup_{x,y\in \R} p_t(x,y) \leq \alpha(t) \coloneqq \mathfrak{L}\frac{\sqrt{2\C}}{\sqrt{2\pi(1-\mathrm{e}^{-2\C t})}}.$$
In particular, $\lVert p_t \rVert_\infty < \infty$ for any $t > 0$ and $\int_{0+}^1 \alpha(t) \diff{t} < \infty$, i.e., \ref{ass: density bound} holds.
\end{lemma}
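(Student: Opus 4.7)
The plan is to invoke Theorem 3.2 of \cite{qian2004}, which provides an Aronson-type Gaussian upper bound for the transition density of a scalar SDE whose coefficients satisfy precisely the kind of Lipschitz and linear growth conditions built into $\bm{\Sigma}$ and the assumptions on $\sigma$. The constant $\C$ controlling the linear growth of $b$ plays the role of the mean-reversion rate in a dominating Ornstein--Uhlenbeck-type comparison process; recalling that a standard OU process $\diff{Y_t} = -\C Y_t \diff{t} + \diff{W_t}$ has marginal variance $(1-\mathrm{e}^{-2\C t})\slash(2\C)$, the shape of $\alpha(t)$ is exactly the reciprocal square-root of the Gaussian normalizer $\sqrt{2\pi v(t)}$ with $v(t) = (1-\mathrm{e}^{-2\C t})\slash(2\C)$.

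First I would verify the hypotheses of Qian's result in our setup: $b$ is globally Lipschitz with $\lvert b(x) \rvert \leq \C(1+\lvert x \rvert)$ (recall $b \in \bm{\Sigma}$), while $\sigma$ is Lipschitz, differentiable and uniformly bounded below and above by $\underbars\nu,\overbar\nu$. These are precisely the structural assumptions under which \cite[Theorem 3.2]{qian2004} produces a constant $\mathfrak L = \mathfrak L(\C,\underbars\nu,\overbar\nu)>0$ and an upper bound of Gaussian form
\begin{equation*}
p_t(x,y) \;\leq\; \mathfrak L \cdot \frac{1}{\sqrt{2\pi v(t)}} \exp\!\Big(-\frac{c_0\lvert x-y\rvert^2}{v(t)}\Big), \qquad t \in (0,1],\ x,y \in \R,
\end{equation*}
with $v(t) = (1-\mathrm{e}^{-2\C t})\slash(2\C)$ and some $c_0 > 0$. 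Taking the supremum over $x,y \in \R$ kills the exponential factor (which is bounded by $1$) and leaves the claimed expression
\begin{equation*}
\sup_{x,y \in \R} p_t(x,y) \;\leq\; \frac{\mathfrak L}{\sqrt{2\pi v(t)}} \;=\; \mathfrak L \frac{\sqrt{2\C}}{\sqrt{2\pi(1-\mathrm{e}^{-2\C t})}} \;=\; \alpha(t).
\end{equation*}

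Finally, for the integrability statement I would note that $1-\mathrm{e}^{-2\C t} \geq \C t$ for $t \in (0, t_0]$ with $t_0$ small enough (coming from $1-\mathrm{e}^{-u} \geq u\slash 2$ on a neighborhood of $0$), which yields $\alpha(t) \leq \mathfrak L\slash \sqrt{\pi t}$ on $(0,t_0]$ and hence integrability on $(0,1]$ since $\alpha$ is continuous and bounded on $[t_0,1]$. The $t > 0$ statement $\lVert p_t \rVert_\infty < \infty$ follows immediately from $\alpha$ being finite on $(0,1]$ combined with the semigroup property $p_{t+1}(x,y) = \int p_1(x,z) p_t(z,y)\diff{z} \leq \alpha(1)$, extending the bound to all $t > 0$. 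The only mild obstacle is confirming the precise form of $v(t)$ produced by \cite[Theorem 3.2]{qian2004}; any finite comparison constant in front of $v(t)$ can be absorbed into $\mathfrak L$ and $c_0$, so the exact shape $(1-\mathrm{e}^{-2\C t})\slash (2\C)$ is preserved modulo the overall multiplicative constant $\mathfrak L$.
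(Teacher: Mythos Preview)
Your proposal is correct and follows exactly the approach the paper takes: the paper does not give a detailed proof of this lemma but simply invokes Theorem~3.2 of \cite{qian2004} in the paragraph immediately preceding the statement, which is precisely what you do (with more detail on verifying the hypotheses and deducing the integrability of $\alpha$). One minor slip: in your semigroup argument for $t>1$ you should write $p_{t+1}(x,y)=\int p_t(x,z)p_1(z,y)\diff{z}\leq \alpha(1)\int p_t(x,z)\diff{z}=\alpha(1)$, since with the factors in the order you chose the bound $p_1(x,z)\leq\alpha(1)$ leaves the integral $\int p_t(z,y)\diff{z}$, which is not obviously equal to $1$.
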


We proceed by showing how to derive \ref{ass: conv bound} from exponential ergodicity of $\X$, which holds under the given coefficient assumptions as argued below. 
Before we start with the proof, let us briefly show that $\X$ is a $T$-process, i.e., that there exist a lower semicontinuous kernel $T$ and a sampling distribution $a$ on $(0,\infty)$ such that for the sampled kernel
$$K_a(x,\cdot) \coloneqq \int_0^\infty P_t(x,\cdot) \, a(\diff{t}) \geq T(x,\cdot), \quad x \in \R.$$
This condition is quite general for specific models of ergodic Markov processes, see, e.g., the discussion in Appendix A.1 of \cite{dexheimer20}. In our model, $\X$ has the $\mathcal{C}_b$-Feller property, see, e.g., \cite[Theorem 19.9]{schilling2014}, and every point $x \in \R$ is reachable in finite time almost surely (cf.\ \cite[Proposition 1.15]{kut04}), i.e., for any initial distribution, $\PP(\tau_x < \infty) = 1$, where $\tau_x = \inf\{t \geq 0: X_t = x\}$. Hence, we can deduce from \cite[Theorem 7.1]{Tweedie1994} that, indeed, $\X$ is a $T$-process. Moreover, from almost sure reachability of any point, it is clear that $\X$ is \textit{Harris recurrent} wrt the Lebesgue measure $\lebesgue$, i.e., for any $B \in \mathcal{B}(\R)$ such that $\lebesgue(B) > 0$, we have $\PP^x(\tau_B < \infty) = 1$ for any $x \in \R$, where $\tau_B \coloneqq \inf\{t \geq 0: X_t \in B\}.$ 

\begin{proposition}\label{lem:rfunction}
$\X$ is exponentially ergodic, i.e., there exist constants $\coa,\cob>0$ such that
\begin{equation}\label{ann: exp}
\|P_t(x,\cdot)-\mu\|_{\operatorname{TV}}\le \cob V(x)\e^{-\coa t},\quad t\ge 0,x\in\R,
\end{equation}
where $V \geq 1$ is a $\mathcal{C}^2$-function, which is equal to $\exp(\gamma \lvert x \rvert)$ for $\lvert x \rvert > A.$
Moreover, \ref{ass: conv bound} holds true with 
$$r_\mathcal{S}(t)\coloneqq \cob(\lVert p_1 \rVert_\infty +\lVert \rho \rVert_\infty) \sup_{x \in \mathcal{S}} V(x) \mathrm{e}^{-\coa (t-1)}, \quad t > 1.$$
\end{proposition}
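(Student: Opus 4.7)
The plan is to establish exponential ergodicity via the Foster--Lyapunov drift criterion from Meyn--Tweedie theory, and then to derive the pointwise transition density bound by a one-step semigroup decomposition.

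For the first part, recall from the discussion preceding the statement that $\X$ is a $\cC_b$-Feller $T$-process which is Harris recurrent with respect to Lebesgue measure and hence $\lebesgue$-irreducible; in particular, every compact set is petite. By the Down--Meyn--Tweedie exponential ergodicity theorem, it therefore suffices to exhibit a $\cC^2$-function $V\geq 1$ and constants $c,d>0$ such that the generator $\mathcal{L}f=bf'+\tfrac{1}{2}\sigma^2f''$ satisfies $\mathcal{L}V(x)\leq -cV(x)+d\one_{[-A,A]}(x)$ for all $x\in\R$. I pick any $\cC^2$ function $V\geq 1$ that coincides with $\e^{\gamma|x|}$ for $|x|>A$---obtained, e.g., by smooth Hermite interpolation on $[-A,A]$ matching value, first and second derivative at $\pm A$---and for $x>A$ compute
$$\mathcal{L}V(x)=\gamma\e^{\gamma x}\bigl(b(x)+\tfrac{\gamma}{2}\sigma^2(x)\bigr)\leq \gamma\e^{\gamma x}\sigma^2(x)\bigl(-\gamma+\tfrac{\gamma}{2}\bigr)=-\tfrac{\gamma^2\sigma^2(x)}{2}V(x)\leq -\tfrac{\gamma^2\underbars\nu^2}{2}V(x),$$
invoking $b(x)/\sigma^2(x)\leq -\gamma$ from the definition of $\bm\Sigma$. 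The case $x<-A$ is symmetric, and on the compact interval $[-A,A]$ the expression $\mathcal{L}V$ is bounded by local boundedness of $b,\sigma^2$ and $V\in\cC^2$. This verifies the drift condition globally (for some $d>0$) and thereby yields both \eqref{ann: exp} and $\mu(V)<\infty$.

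For the pointwise bound \ref{ass: conv bound}, fix a compact $\mathcal{S}\subset\R$ and $t>1$. Invariance of $\mu$, $\int p_1(z,y)\rho(z)\d z=\rho(y)$, together with $\int(p_{t-1}(x,z)-\rho(z))\d z=0$, yields the identity
$$p_t(x,y)-\rho(y)=\int_{\R}\bigl(p_1(z,y)-\rho(y)\bigr)\bigl(p_{t-1}(x,z)-\rho(z)\bigr)\d z.$$
Bounding the first factor by $\lVert p_1\rVert_\infty+\lVert\rho\rVert_\infty$ and recognising the integral of the modulus of the second factor as a constant multiple of $\lVert P_{t-1}(x,\cdot)-\mu\rVert_{\mathrm{TV}}$, a direct appeal to \eqref{ann: exp} (after absorbing numerical factors into $\cob$) gives
$$|p_t(x,y)-\rho(y)|\leq \cob(\lVert p_1\rVert_\infty+\lVert\rho\rVert_\infty)V(x)\e^{-\coa(t-1)}.$$
Taking the supremum over $x\in\mathcal{S}$ produces exactly the claimed $r_\mathcal{S}(t)$, and $\int_1^\infty r_\mathcal{S}(t)\d t<\infty$ is immediate from the exponential decay.

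The main technical point is the verification of the drift inequality and the realisation of a globally $\cC^2$ Lyapunov function matching $\e^{\gamma|x|}$ outside $[-A,A]$; once these are in place, both conclusions reduce to a direct application of standard Markov-process ergodicity results together with the short density decomposition above.
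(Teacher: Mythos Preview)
Your argument for exponential ergodicity follows the paper's line almost exactly: same Lyapunov function $V$, same drift computation, same appeal to Down--Meyn--Tweedie. One point you gloss over is aperiodicity: \cite[Theorem~5.2]{DownMeynTweedie1995} requires it, and the paper verifies it explicitly by showing that every skeleton chain $(X_{n\Delta})_{n\in\N_0}$ is $\lebesgue$-irreducible thanks to strict positivity of the transition densities $p_\Delta(x,y)>0$ obtained from \cite{qian2004}. This deserves a sentence.

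For the second part you take a genuinely different and more elementary route. The paper constructs an auxiliary function $G_{y,\varepsilon}\in\cC^2$ satisfying $\mathcal{A}G_{y,\varepsilon}(x)=h_\varepsilon(y-x)-\int h_\varepsilon(y-v)\rho(v)\d v$ for a mollifier $h_\varepsilon$, observes that $P_t\mathcal{A}G_{y,\varepsilon}(x)\to p_t(x,y)-\rho(y)$ as $\varepsilon\downarrow 0$, and then controls $\lvert P_{t-1}(P_1\mathcal{A}G_{y,\varepsilon})(x)-\mu(P_1\mathcal{A}G_{y,\varepsilon})\rvert$ via the total-variation bound. You bypass the generator and mollification entirely, applying Chapman--Kolmogorov together with the centering identity $\int(p_{t-1}(x,z)-\rho(z))\d z=0$ directly at the density level. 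Your argument is shorter and uses only boundedness of $p_1$ (already supplied by Lemma~\ref{lem: heat est}); the paper's mollification has the advantage of not relying on a pointwise version of Chapman--Kolmogorov for the chosen density representative, and the auxiliary $G_{y,\varepsilon}$ is in the spirit of the generator-based variance machinery the appendix is building towards. Both reach the same final bound.
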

\begin{proof}
Denote by $\mathcal A$ the extended generator of $\X$ with domain $\mathcal D(\mathcal A)$, i.e., $f \in \mathcal{D}(\mathcal{A})$ if there exists a measurable function $g$ such that the process
\begin{equation} \label{eq: ext gen}
f(X_t) - f(X_0) - \int_0^t g(X_s) \diff{s}, \quad t \geq 0,
\end{equation}
is a local martingale.
By It\={o}'s formula, for any $f\in\mathcal C^2(\R)$ and
\begin{equation}\label{eq:gen diff}
g = bf^\prime + \frac{1}{2}\sigma^2 f^{\prime\prime} = \frac{1}{2 \rho}(\sigma^2 \rho f^\prime)^\prime,
\end{equation}
we have that $(f,g)$ satisfy \eqref{eq: ext gen}. Hence, $\mathcal C^2(\R)\subset\mathcal D(\mathcal A)$ and we write $\mathcal{A}f \coloneqq g$ for $f \in \mathcal{C}^2(\R)$ and $g$ given in \eqref{eq:gen diff}. Let $V \geq 1$ be a function as in the statement of the proposition. 
Using the definition of $\bm{\Sigma}$, it follows for $\lvert x \rvert > A$ that 
$$\mathcal{A}V(x) = \gamma \sigma^2(x) V(x) \Big(\mathrm{sgn}(x)\frac{b(x)}{\sigma^2(x)} + \frac{\gamma}{2} \Big) \leq -\frac{\gamma^2  \overbar{\nu}^2}{2} V(x).$$
Thus, $V$ satisfies the Lyapunov-type inequality
\begin{equation}\label{eq: lyap}
\mathcal{A}V(x) \leq -\frac{\gamma^2  \overbar{\nu}^2}{2} V(x) + \zeta, \quad x \in \R,
\end{equation}
where $\zeta = \sup_{x \in [-A,A]} \big(\lvert b(x)V^\prime(x)\rvert + \tfrac{1}{2}\lvert \sigma^2(x) V^{\prime \prime}(x)\rvert\big) < \infty$. 
Since $\X$ is trivially irreducible by ergodicity and also a $T$-process, it follows that any compact set is \textit{petite} by \cite[Theorem 5.1]{Tweedie1994} and, hence, $V$ is unbounded off petite sets (we refer to \cite{DownMeynTweedie1995} for the latter two notions). 
Moreover, for any $\Delta > 0$, the skeleton chain $\X^\Delta = (X_{n \Delta})_{n \in \N_0}$ is irreducible. 
This follows from observing that, by the linear growth restriction on the drift $b$, \cite[Theorem 3.1]{qian2004} yields that, for any $t > 0$, $P_t$ has a Lebesgue density $p_t$ such that for any $x,y \in \R$ we have $p_t(x,y) > 0$.
Thus, $\X^{\Delta}$ is irreducible wrt to the Lebesgue measure. Existence of an irreducible skeleton chain and Harris recurrence give aperiodicity of $\X$ as defined in \cite{DownMeynTweedie1995} and, hence, we conclude from \cite[Theorem 5.2]{DownMeynTweedie1995} that \eqref{eq: lyap} indeed implies exponential ergodicity in the sense of \eqref{ann: exp}.

Let us now come to verifying \ref{ass: conv bound}. Analogously to the proof of \cite[Proposition 5.1]{dalrei06}, define, for $x,y \in \R$,
\[
G_{y,\ep}(x) = \int_{-\infty}^x \int_{-\infty}^u \frac{2\rho(z)}{\sigma^2(u)\rho(u)}\Big(h_\varepsilon(y-z)-\int h_\varepsilon(y-v) \rho(v)\diff{v}  \Big) \diff{z} \diff{u},\quad \ep>0,
\]
where $h_\varepsilon \coloneqq \varepsilon^{-1}h(\cdot\slash \varepsilon)$ and $h \in \mathcal{C}^\infty(\R)$ with $h \geq 0$ is some bounded smoothing function fulfilling $\int h = 1$. 
Then, using $\sigma \geq \underbars{\nu} > 0$, it holds that $G_{y,\varepsilon} \in \mathcal{C}^2(\R)$ and hence $G_{y,\varepsilon} \in \mathcal{D}(\mathcal{A})$. 
Plugging in, we obtain from \eqref{eq:gen diff} that
\begin{equation}\label{eq:gen diff2}
\mathcal{A}G_{y,\varepsilon}(x) = h_{\varepsilon}(y-x) - \int h_{\varepsilon}(y-v) \rho(v) \diff{v}.
\end{equation}
It follows that 
\begin{equation} \label{eq:gen diff3}
P_t\mathcal{A}G_{y,\varepsilon}(x) = \int p_t(x,v) h_{\varepsilon}(y-v) \diff{v} - \int h_{\varepsilon}(y-v) \rho(v) \diff{v} \underset{\varepsilon \to 0}{\longrightarrow} p_t(x,y) - \rho(y).
\end{equation}
Moreover, by the properties of $h$, 
$P_1\mathcal{A}G_{y,\varepsilon}(x) \leq \lVert p_1 \rVert_\infty + \lVert \rho \rVert_\infty < \infty$, 
and, since $\mu$ is invariant, we have 
\begin{equation} \label{eq: gen0}
\mu(P_1\mathcal{A}G_{y,\varepsilon}) = \mu(\mathcal{A}G_{y,\varepsilon}) = 0,
\end{equation}
where the last equality is an immediate consequence of \eqref{eq:gen diff2}.
Combining \eqref{ann: exp}, \eqref{eq:gen diff3} and \eqref{eq: gen0}, it therefore follows for any $x,y \in \R$ and $t > 1$,
\begin{align*}
\lvert p_t(x,y) - \rho(y)\rvert = \lim_{\varepsilon \to 0} \big\lvert P_t\mathcal{A}G_{y,\varepsilon}(x) - \mu(P_1\mathcal{A}G_{y,\varepsilon}) \big\rvert &= \lim_{\varepsilon \to 0} \big\lvert P_{t-1}P_1\mathcal{A}G_{y,\varepsilon}(x) - \mu(P_1\mathcal{A}G_{y,\varepsilon}) \big\rvert\\
&\leq \cob(\lVert p_1 \rVert_\infty +\lVert \rho \rVert_\infty) V(x) \mathrm{e}^{-\coa(t-1)},
\end{align*}
implying the assertion.
\end{proof}

\paragraph{Implications of \ref{ass: conv bound} and \ref{ass: density bound} on the statistical analysis}
The above assumptions provide an alternative framework which is very well suited to deriving (optimal) bounds on the $\sup$-norm risk of estimators.
Inspection of the proof of Theorem \ref{theo:invdens} shows that it is based on three essential components, namely
\begin{enumerate}[leftmargin=*,label=(\arabic*),ref=(\arabic*)]
\item the verification of the exponential $\beta$-mixing property (as this allows to draw conclusions on the tail behaviour of the underlying process),\label{comp1}
\item a tight upper bound for the variance of integral functionals (see \eqref{eq:Qbound}), and\label{comp2}
\item upper bounds for the associated entropy integrals.\label{comp3}
\end{enumerate}
Point \ref{comp1} is addressed by the following auxiliary result.

\begin{lemma}[\cite{dexheimer20}, Proposition A.2]\label{lem:prop21}
If $\X$ is an ergodic $T$-process and there exist constants $C_{\mathcal S},\kappa_{\mathcal S}>0$ such that $r_{\mathcal S}$ in \eqref{cond2:uni} has the form $r_\mathcal S(t)= C_\mathcal S\e^{-\kappa_{\mathcal S}t}$, then $\X$ is exponentially $\beta$-mixing.
\end{lemma}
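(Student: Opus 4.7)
The plan is to upgrade the hypothesis to a $V$-uniform ergodicity bound of the form
\begin{equation*}
\|P_t(x,\cdot)-\mu\|_{\mathrm{TV}} \le cV(x)\e^{-\kappa t},\quad x\in\R,\ t\ge 0, \qquad (\star)
\end{equation*}
for a measurable $V\colon\R\to[1,\infty)$ with $\mu(V)<\infty$ and constants $c,\kappa>0$. Integrating $(\star)$ against $\mu$ immediately yields $\beta(t)=\int_\R \|P_t(x,\cdot)-\mu\|_{\mathrm{TV}}\,\mu(\d x)\le c\mu(V)\e^{-\kappa t}$, which is the claimed exponential $\beta$-mixing.

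My first step is to convert the exponential density convergence on compacts into a uniform minorization, thereby sharpening the petiteness of compact sets (automatic under the $T$-process hypothesis) to the small-set property. Fix a compact $\mathcal S\subset\R$ with $\mu(\mathcal S)>0$. The bound $\sup_{x,y\in\mathcal S}|p_t(x,y)-\rho(y)|\le C_\mathcal S\e^{-\kappa_\mathcal S t}$ produces $t_0>1$ such that $p_{t_0}(x,y)\ge\tfrac12\rho(y)$ for all $x,y\in\mathcal S$, whence
\[
P_{t_0}(x,A)\ge \tfrac12\mu(A\cap\mathcal S), \quad x\in\mathcal S,\ A\in\mathcal B(\R),
\]
so $\mathcal S$ is a small set for the $t_0$-skeleton $\X^{t_0}=(X_{nt_0})_{n\in\N_0}$.

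Equipped with a small set, I would invoke the Meyn--Tweedie characterization of $V$-uniform geometric ergodicity (cf.\ \cite[Theorem 5.2]{DownMeynTweedie1995}): it suffices to exhibit a geometric drift $P_{t_0}V\le \lambda V + b\one_{\mathcal S}$ with $\lambda\in(0,1)$, $b<\infty$, together with $\mu(V)<\infty$. I would take $V(x)\coloneqq\E^x[\e^{\kappa'\tau_\mathcal S}]$ for a sufficiently small $\kappa'>0$, where $\tau_\mathcal S\coloneqq\inf\{n\in\N_0\colon X_{nt_0}\in\mathcal S\}$; the drift inequality then follows from the strong Markov property applied at the first return time. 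Transferring $(\star)$ from the skeleton back to continuous time is routine via the monotonicity $\|P_t(x,\cdot)-\mu\|_{\mathrm{TV}}\le\|P_{\lfloor t/t_0\rfloor t_0}(x,\cdot)-\mu\|_{\mathrm{TV}}$.

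The hard part will be the exponential integrability of $\tau_\mathcal S$, both pointwise and, crucially, under $\mu$. Here the full strength of the hypothesis---that $r_{\mathcal K}$ is exponential for \emph{every} compact $\mathcal K$, not merely for the fixed $\mathcal S$---is essential: uniform exponential convergence on an arbitrary compact neighborhood $\mathcal K\supset\mathcal S$ furnishes a uniform lower bound $\inf_{x\in\mathcal K}\PP^x(X_{mt_0}\in\mathcal S)\ge\delta_\mathcal K>0$ for some $m\in\N$, which by the strong Markov property yields a uniform geometric tail for $\tau_\mathcal S$ over $\mathcal K$ and, combined with control on the tails of $\mu$ afforded by the same hypothesis, delivers $\mu(V)<\infty$.
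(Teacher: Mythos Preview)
The paper does not supply its own proof of this lemma; it is cited verbatim from \cite{dexheimer20}, Proposition~A.2, so there is no in-paper argument to compare against. I can only assess whether your sketch stands on its own.

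Your overall strategy---upgrade to $V$-uniform ergodicity $(\star)$ and integrate against $\mu$---is the natural route, and your first step (extracting a small set for a skeleton from the exponential density bound) is correct. The argument in your final paragraph, however, has two gaps.

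First, the iteration for a geometric tail of $\tau_{\mathcal S}$ from $x\in\mathcal K$ does not close as written. You correctly obtain $\inf_{x\in\mathcal K}\PP^x(X_{mt_0}\in\mathcal S)\ge\delta_{\mathcal K}>0$, but to turn this into $\PP^x(\tau_{\mathcal S}>nmt_0)\le(1-\delta_{\mathcal K})^n$ you would need the skeleton to remain in $\mathcal K$ after each failed attempt to hit $\mathcal S$. Nothing in the hypotheses prevents excursions out of $\mathcal K$, after which the uniform bound $\delta_{\mathcal K}$ no longer applies.

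Second, and more seriously, the claim that the hypothesis affords ``control on the tails of $\mu$'' is unsupported. Condition \eqref{cond2:uni} bounds $|p_t(x,y)-\rho(y)|$ only for $x$ and $y$ \emph{both} lying in a fixed compact set; it says nothing about $\rho(y)$ for large $|y|$, and neither ergodicity nor the $T$-process property forces light tails on $\mu$. Without such control you cannot conclude $\mu(V)<\infty$ for $V(x)=\E^x[\e^{\kappa'\tau_{\mathcal S}}]$, since the hypotheses give no handle on $V(x)$ once $x$ lies outside every compact. In the concrete diffusion setting where the paper actually invokes this lemma, $V$-uniform ergodicity with $\mu(V)<\infty$ is established independently in Proposition~\ref{lem:rfunction} via an explicit Lyapunov function, so the desired conclusion follows there; but as a proof of the general lemma your argument is incomplete at precisely this step and would need either a different Lyapunov construction or an input from \cite{dexheimer20} not visible in the present paper.
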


As a consequence, having already argued that $\X$ is a $T$-process, we obtain the exponential $\beta$-mixing property from Proposition \ref{lem:rfunction}.

\begin{lemma} \label{lem: diff mix}
$\X$ is exponentially $\beta$-mixing.
\end{lemma}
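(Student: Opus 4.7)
The plan is to apply Lemma \ref{lem:prop21} directly, which requires checking two ingredients: that $\X$ is an ergodic $T$-process, and that the convergence bound $r_{\mathcal{S}}$ in \eqref{cond2:uni} is of exponential form.

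For the first ingredient, both properties have already been established in the discussion preceding Proposition \ref{lem:rfunction}: ergodicity is built into the construction of $\bm{\Sigma}$ via the unique invariant density $\rho_b$, and the $T$-process property follows from the $\mathcal{C}_b$-Feller property of $\X$ combined with almost sure reachability of every point in $\R$, via Theorem 7.1 in \cite{Tweedie1994}.

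For the second ingredient, I would invoke Proposition \ref{lem:rfunction}, which in particular states that \ref{ass: conv bound} holds with the explicit choice
\[
r_{\mathcal{S}}(t) = \cob\bigl(\lVert p_1 \rVert_\infty + \lVert \rho \rVert_\infty\bigr) \sup_{x \in \mathcal{S}} V(x)\, \mathrm{e}^{-\coa(t-1)}, \qquad t > 1.
\]
For any fixed compact $\mathcal{S} \subset \R$, this has exactly the form $r_{\mathcal{S}}(t) = C_{\mathcal{S}} \mathrm{e}^{-\kappa_{\mathcal{S}} t}$ required by Lemma \ref{lem:prop21}, with $\kappa_{\mathcal{S}} = \coa$ and $C_{\mathcal{S}} = \cob\mathrm{e}^{\coa}(\lVert p_1 \rVert_\infty + \lVert \rho \rVert_\infty) \sup_{x \in \mathcal{S}} V(x) < \infty$ (finite since $V$ is continuous and $\mathcal{S}$ compact).

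With both hypotheses of Lemma \ref{lem:prop21} verified, the conclusion that $\X$ is exponentially $\beta$-mixing follows immediately. I do not anticipate any obstacle here, as the lemma is essentially a bookkeeping step that packages the earlier Proposition \ref{lem:rfunction} and the $T$-process discussion into the form required for invoking Proposition \ref{prop:mix} in the proof of Theorem \ref{theo:invdens}.
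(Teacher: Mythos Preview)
Your proposal is correct and follows essentially the same approach as the paper: the paper does not give a separate proof for this lemma, but simply notes that the exponential $\beta$-mixing property follows from Proposition \ref{lem:rfunction} via Lemma \ref{lem:prop21}, having already verified the $T$-process property in the preceding discussion. Your write-up just makes the invocation of Lemma \ref{lem:prop21} and the identification of the constants $C_{\mathcal{S}}$, $\kappa_{\mathcal{S}}$ explicit.
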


We proceed by demonstrating how point \ref{comp2} can be verified without resorting to the concept of diffusion local time and its specific properties.
It was already mentioned in Section \ref{sec:intro} that the $\beta$-mixing property in dimension $d=1$ is not quite sufficient to guarantee variance bounds of the order stated in \eqref{eq:Qbound}.
Indeed, let $\X$ be any ergodic scalar diffusion process with transition density $p_t$ and invariant measure $\d\mu=\rho\d\lebesgue$, let $f$ be a bounded function with compact support $\mathcal S\subset\R$ of Lebesgue measure $\lebesgue(\mathcal S)<1$, and fix $T\ge1$ and $D\in(0,T]$.
Referring to our application in Section \ref{sec:diff}, we require an upper bound of the form
\begin{equation}\label{eq:boundwanted}
\Var\left(\int_0^Tf(X_u)\diff{u}\right)\lesssim\lebesgue^2(\mathcal S)T.
\end{equation}
Using the Markov property and invariance of $\mu$, one obtains the representation
\[\Var\left(\int_0^Tf(X_u)\diff{u}\right)=2\mathcal I(0,D)+2\mathcal I(D,T),\]
for
\[\mathcal I(a,b)\coloneqq \int_a^b(T-v)\iint f(x)f(y)\left(p_v(x,y)-\rho(y)\right)\mu(\diff{x})\diff{y}\diff{v},\quad 0\le a<b\le T.
\]
Standard heat kernel bounds of the form $p_t(x,y)\lesssim t^{-1/2}$ imply that
\begin{equation}\label{eq:int1}
\mathcal I(0,D)\lesssim T\iint_{\mathcal S^2}f(x)f(y)\int_0^Dt^{-1/2}\diff{t}\mu(\diff{x})\diff{y}\le 2TD\lebesgue^2(\mathcal S)\|f\|_\infty^2.
\end{equation}
For bounding the second integral, the standard approach (which is known to provide tight variance bounds in dimension $d\ge2$) consists in exploiting mixing properties of $\X$.
Hence, a natural idea is to work under the assumption that $\X$ is exponentially $\beta$-mixing and fulfills the mixing bound \eqref{eq:mix}.
This gives
\begin{align}\nonumber
\mathcal I(D,T)&\le 2T\|f\|_\infty^2\int_D^T\int_{\mathcal S}\left(P_v(x,\mathcal S)-\mu(\mathcal S)\right)\,\mu(\diff{x})\diff{v}\\\label{eq:int2}
&\le 2T\|f\|_\infty^2\int_D^T\int_{\mathcal S}\left\|P_v(x,\mathcal S)-\mu(\mathcal S)\right\|_{\TV}\,\mu(\diff{x})\diff{v}\lesssim T\|f\|_\infty^2\e^{-\kappa D}.
\end{align}
Balancing the upper bounds \eqref{eq:int1} and \eqref{eq:int2} suggests to choose $D\asymp -2\log(\lebesgue(\mathcal S))\vee 1$, resulting in the final estimate
\[
\Var\left(\int_0^Tf(X_s)\diff{s}\right)\lesssim T\|f\|_\infty^2\lebesgue^2(\mathcal S)|\log(\lebesgue(\mathcal S))|.
\]
This upper bound differs by a logarithmic factor from \eqref{eq:boundwanted} and does \emph{not} allow to prove the optimal convergence rate.
By way of contrast, \ref{ass: conv bound} combined with the mild heat kernel estimates of the semigroup in \ref{ass: density bound}---which both hold in our setting by Lemma \ref{lem: heat est} and Proposition \ref{lem:rfunction}---yield results that are tight enough for proving the optimal upper bound on the convergence rate stated in Theorem \ref{theo:invdens}.

\begin{lemma}\label{lem:Qbound}
Fix some open and bounded set $D\subset\R$, and let $Q$ be a Lipschitz-continuous kernel function with $\supp(Q)=[-1/2,1/2]\subset D$.
Then, for any $x \in D$ and $h\in (0,1)$, it holds 
$$\mathrm{Var}\bigg(\frac{1}{\sqrt T}\int_0^T Q\Big(\frac{x-X_u}{h} \Big) \diff{u} \bigg) \leq C_0\|\rho\|_\infty h^2,\quad T>0.$$
\end{lemma}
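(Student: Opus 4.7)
The plan is to follow the blueprint sketched in the discussion preceding the lemma, replacing the mixing-based bound on the long-time integral by the sharper convergence bound provided by assumption \ref{ass: conv bound}, and using the heat kernel bound \ref{ass: density bound} for short times. Throughout, write $f_{x,h}(\cdot) \coloneqq Q((x-\cdot)/h)$, so that $\supp(f_{x,h}) \subset \mathcal{S}_{x,h} \coloneqq [x-h/2,x+h/2]$, which has Lebesgue measure $h$, and $\lVert f_{x,h}\rVert_\infty \le \lVert Q\rVert_\infty$. Since $D$ is bounded and $h<1$, the compact set $\mathcal{S} \coloneqq \overline{D+[-1/2,1/2]}$ contains $\mathcal{S}_{x,h}$ for every admissible $(x,h)$, so that both \ref{ass: conv bound} (applied to $\mathcal{S}$) and \ref{ass: density bound} can be invoked uniformly in $(x,h)$.

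First, using stationarity of $\X$ under $\PP^{\mu}$ and the standard change-of-variables argument, I would write
\[
\mathrm{Var}\!\left(\int_0^T f_{x,h}(X_u)\,\d u\right) \;=\; 2\int_0^T (T-u)\,\mathrm{Cov}\bigl(f_{x,h}(X_0),f_{x,h}(X_u)\bigr)\,\d u,
\]
and represent the covariance in terms of the transition and invariant densities as
\[
\mathrm{Cov}\bigl(f_{x,h}(X_0),f_{x,h}(X_u)\bigr) \;=\; \iint f_{x,h}(z)\,f_{x,h}(y)\,\bigl(p_u(z,y)-\rho(y)\bigr)\,\rho(z)\,\d z\,\d y.
\]
Dividing by $T$ and bounding $(T-u)/T \le 1$ reduces the claim to an estimate of $\int_0^T |\mathrm{Cov}(f_{x,h}(X_0),f_{x,h}(X_u))|\,\d u$ that is uniform in $T$. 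I would split this integral at $u=1$ and treat the two pieces separately.

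For $u\in(0,1]$, I would bound $|p_u(z,y)-\rho(y)| \le \alpha(u)+\lVert\rho\rVert_\infty$ using \ref{ass: density bound}, restrict both $y,z$ to $\mathcal{S}_{x,h}$ (whose Lebesgue measure is $h$), and use $\int_{\mathcal{S}_{x,h}}\rho(z)\,\d z \le h\lVert \rho\rVert_\infty$ together with $|\mathcal{S}_{x,h}| = h$ to obtain
\[
|\mathrm{Cov}(f_{x,h}(X_0),f_{x,h}(X_u))| \;\le\; \lVert Q\rVert_\infty^2\,h^2\,\lVert\rho\rVert_\infty\,(\alpha(u)+\lVert\rho\rVert_\infty).
\]
Integrability of $\alpha$ on $(0,1]$ (via \ref{ass: density bound}) makes the time integral of this piece finite. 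For $u>1$, \ref{ass: conv bound} applied with $\mathcal{S}$ yields $|p_u(z,y)-\rho(y)|\le r_{\mathcal{S}}(u)$ uniformly on $\mathcal{S}_{x,h}^2$, giving
\[
|\mathrm{Cov}(f_{x,h}(X_0),f_{x,h}(X_u))| \;\le\; \lVert Q\rVert_\infty^2\,h^2\,\lVert\rho\rVert_\infty\,r_{\mathcal{S}}(u),
\]
whose time integral is finite by the $L^1$ condition in \ref{ass: conv bound}. Combining the two pieces produces a bound of the form $C_0\lVert\rho\rVert_\infty h^2$ with $C_0$ depending only on $\lVert Q\rVert_\infty$, $\rho^\ast$, the constant $c_1$ in \ref{ass: density bound}, and $\int_1^\infty r_{\mathcal{S}}$.

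The technical heart of the argument is not a single hard computation but rather making sure that both \ref{ass: conv bound} and \ref{ass: density bound} can be invoked with constants that do not depend on $x\in D$ and $h\in(0,1)$; the choice of the enlarged compact set $\mathcal{S}$ above secures exactly this uniformity. The crucial structural point\textemdash compared to the naive exponential $\beta$-mixing argument recalled just before the lemma, which loses a $|\log h|$ factor\textemdash is that \ref{ass: conv bound} provides a pointwise, integrable in time, convergence estimate on $|p_u(z,y)-\rho(y)|$, allowing to exploit the fact that both $f_{x,h}(z)$ and $f_{x,h}(y)$ are supported on a set of Lebesgue measure $h$, producing the full factor $h^2$ without any logarithmic correction.
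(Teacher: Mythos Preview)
Your argument is correct and is essentially the same as the paper's: the paper invokes Proposition~2.1 in \cite{dexheimer20}, whose proof carries out precisely the short-time/long-time split you describe using \ref{ass: density bound} and \ref{ass: conv bound}, and then specializes to $f_{x,h}$; you simply spell this computation out explicitly and absorb the Lyapunov factor $\sup_{y\in\mathcal S}V(y)$ (which appears in the paper's cited bound) into the constant $\int_1^\infty r_{\mathcal S}$ via the concrete form of $r_{\mathcal S}$ from Proposition~\ref{lem:rfunction}.
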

\begin{proof}
A close inspection of the proof of Proposition 2.1 in \cite{dexheimer20} reveals that there exists some constant $C_1 > 0$ s.t., for any bounded $f$ with compact support $\mathcal{S}$ satisfying $\lebesgue(\mathcal{S}) < 1$ and $T > 0$,
$$\mathrm{Var}\Big(\int_0^T f(X_s) \diff{s}\Big) \leq C_1 \|\rho\|_\infty T \lVert f \rVert_\infty \lebesgue^2(\mathcal{S})\sup_{x \in \mathcal{S}} V(x).$$
Using that, for any $x \in D$, $\mathrm{supp}(Q((x-\cdot)/h)) = x - h\cdot \mathrm{supp}(Q) \subset D - h\cdot \mathrm{supp}(Q)$ and the right hand side is a bounded domain, it follows that $\sup_{y \in \mathrm{supp}(Q(x-\cdot)\slash h)} V(y) \leq C_2 < \infty$ with $C_2$ being a constant independent of $x \in D$. 
Moreover, $\lebesgue(\mathrm{supp}(Q(x-\cdot\slash h))) = h$, giving the assertion. 
\end{proof}

Note that Proposition \ref{lem:rfunction} (in combination with Lemma \ref{lem:prop21}) and Lemma \ref{lem:Qbound} address items \ref{comp1} and \ref{comp2}, respectively, required for the analysis of the $\sup$-norm risk.
Finally, concerning point \ref{comp3}, the following lemma provides standard bounds on covering numbers on function classes related to kernel estimation procedures wrt the norms appearing in Proposition \ref{prop:mix}. 
By a slight abuse of notation, we do not distinguish notationally between the $\sup$-norm on $\R$ and the function space $\mathcal{B}_b(\R)$.

\begin{lemma}[Lemma C.1 in \cite{dexheimer20}]\label{lemma: covering numbers}
Let $D\subset\R$ be a bounded set, $Q\colon\R\to\R$ a Lipschitz-continuous function with Lipschitz constant $L>0$, fix $h\in(0,1)$, and denote
\[\mathcal G\coloneqq \left\{Q\left(\frac{x-\cdot}{h}\right)-\int Q\left(\frac{x-\cdot}{h}\right)\diff{\mu}:\ x\in D\cap\mathbb Q\right\}.\]
If, for some $\mathfrak V>0$,
\[\Var\left(\frac{1}{\sqrt t}\int_0^tg(X_s)\diff{s}\right)\le \mathfrak V^2\|g\|_{L^2(\mu)}^2,\quad g\in\mathcal G,t>0,\]
then there exist constants $\mathfrak{A}, \upsilon > 0$ such that, for any $\varepsilon > 0$ and $t > 0$,
$$\mathcal{N}(\varepsilon, \mathcal{G}, d_{\mathbb{G},t}) \leq \left(\frac{\mathfrak{AV}}{\ep}\right)^\upsilon
\quad\text{ and }\quad
\mathcal{N}(\varepsilon, \mathcal{G},d_\infty) \leq \frac{4L\mathrm{diam}(D)}{\varepsilon h}.$$
\end{lemma}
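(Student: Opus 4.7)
The key observation is that $\mathcal{G}$ is parametrized in a Lipschitz fashion by the one-dimensional parameter $x \in D \cap \mathbb{Q}$, so both covering-number bounds can be obtained by pushing forward a standard net in the interval $D$ through the map $x \mapsto g_x$, where $g_x(\cdot) \coloneqq Q((x-\cdot)/h) - \mu(Q((x-\cdot)/h))$. The whole argument then reduces to showing that this map is Lipschitz with respect to each of the two (pseudo-)metrics in the statement.

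For the $d_\infty$ bound, I would first exploit the Lipschitz property of $Q$: uniformly in $y$, $|Q((x-y)/h) - Q((x'-y)/h)| \leq L|x-x'|/h$, so that $\lVert Q((x-\cdot)/h) - Q((x'-\cdot)/h)\rVert_\infty \leq L|x-x'|/h$, and Jensen's inequality gives the same bound for the centering terms. By the triangle inequality, $\lVert g_x - g_{x'}\rVert_\infty \leq 2L|x-x'|/h$. Hence an $(\varepsilon h/(2L))$-net of the interval $D$ induces an $\varepsilon$-cover of $\mathcal G$ in $d_\infty$, and since covering an interval of length $\mathrm{diam}(D)$ with balls of radius $r$ requires at most $\mathrm{diam}(D)/r + 1$ points, absorbing the additive $+1$ into a slightly enlarged multiplicative constant yields $\mathcal{N}(\varepsilon, \mathcal{G}, d_\infty) \leq 4L\mathrm{diam}(D)/(\varepsilon h)$.

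For the $d_{\mathbb{G},t}$ bound, I would apply the variance hypothesis to $f-g$ with $f,g\in\mathcal G$, giving $d_{\mathbb{G},t}(f,g) \leq \mathfrak V \lVert f-g \rVert_{L^2(\mu)}$; under the $\beta$-mixing-based variance estimates in the spirit of Proposition 2.4 of \cite{dexheimer20}, the stated hypothesis extends without modification to centered differences of kernels. Since $\mu$ is a probability measure, $\lVert \cdot\rVert_{L^2(\mu)} \leq \lVert \cdot \rVert_\infty$, so the Lipschitz estimate from the previous paragraph transfers to give $d_{\mathbb{G},t}(g_x, g_{x'}) \leq 2L\mathfrak V |x-x'|/h$. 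Running the same covering argument as before with this enlarged Lipschitz constant produces $\mathcal N(\varepsilon, \mathcal G, d_{\mathbb{G},t}) \leq \mathfrak A \mathfrak V/\varepsilon$ with $\mathfrak A \coloneqq 4L\mathrm{diam}(D)/h$ and $\upsilon = 1$, which has the required form.

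No substantial obstacle is expected: the entire argument is a straightforward Lipschitz parametrization together with the elementary observation that a one-dimensional interval has linear metric entropy. The only mildly delicate point is the passage from the variance hypothesis (stated for single elements of $\mathcal G$) to a pseudo-metric inequality on differences, which is handled by noting that $f-g$ is itself a centered difference of scaled and translated kernels and falls within the scope of the same mixing-based variance estimates used to establish the hypothesis in the first place.
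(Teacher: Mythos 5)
A framing remark first: the paper does not prove this statement itself---it imports it verbatim as Lemma C.1 of \cite{dexheimer20}---so your argument has to be judged on its own terms rather than against an in-paper proof.

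Your overall strategy (Lipschitz parametrization of $\mathcal G$ by the scalar parameter $x\in D$, combined with the linear metric entropy of a bounded interval) is the natural one, and the $d_\infty$ half is essentially complete: $\lVert g_x-g_{x'}\rVert_\infty\le 2L\lvert x-x'\rvert/h$ and a net of $D$ of mesh $\varepsilon h/(2L)$ give the stated bound, up to the harmless caveat that for $\varepsilon> 4L\,\mathrm{diam}(D)/h$ the right-hand side drops below $1$ while any covering number is at least $1$; this is an artifact of the statement itself (it is only ever used inside entropy integrals, where $\log\mathcal N=0$ beyond the diameter of $\mathcal G$), not of your argument.

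The gap is in the $d_{\mathbb G,t}$ half. The hypothesis bounds $\Var\big(t^{-1/2}\int_0^t g(X_s)\diff s\big)$ only for single elements $g\in\mathcal G$, whereas $d_{\mathbb G,t}(f,g)$ is the standard deviation of the integrated \emph{difference} $f-g$, which is not an element of $\mathcal G$. From the single-element hypothesis alone the best available estimate is $d_{\mathbb G,t}(f,g)\le \mathfrak V\big(\lVert f\rVert_{L^2(\mu)}+\lVert g\rVert_{L^2(\mu)}\big)$, which does not shrink as $x'\to x$ and therefore cannot yield a covering bound at small scales. Your inequality $d_{\mathbb G,t}(f,g)\le\mathfrak V\lVert f-g\rVert_{L^2(\mu)}$ is exactly what is needed, but your justification---that $f-g$ ``falls within the scope of the same mixing-based variance estimates used to establish the hypothesis''---steps outside the lemma and is circular as a proof of the lemma as stated: the lemma knows nothing about how its hypothesis was obtained. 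The correct repair is to demand the variance bound for the class of centered kernel differences $Q((x-\cdot)/h)-Q((x'-\cdot)/h)-\mu\big(Q((x-\cdot)/h)-Q((x'-\cdot)/h)\big)$, i.e.\ for a class containing the differences that actually enter $d_{\mathbb G,t}$; this stronger form is precisely what is verified in both applications in the present paper (via the occupation-time/local-time estimate in the diffusion case, and via the mixing bound $d_{\mathbb G,t}\lesssim d_\infty$ in the L\'evy case, both of which hold for arbitrary bounded compactly supported, resp.\ bounded, functions). With that strengthened hypothesis your chain $d_{\mathbb G,t}(g_x,g_{x'})\le\mathfrak V\lVert g_x-g_{x'}\rVert_{L^2(\mu)}\le\mathfrak V\lVert g_x-g_{x'}\rVert_\infty\le 2L\mathfrak V\lvert x-x'\rvert/h$ is sound (using that $\mu$ is a probability measure), and the resulting constants $\upsilon=1$ and $\mathfrak A\asymp L\,\mathrm{diam}(D)/h$ are admissible, since $h$ is fixed in the statement and an $h$-dependent $\mathfrak A$ only affects logarithmic factors at the point where the lemma is applied.
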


\section{Proof of Lemma \ref{lem:strategy}} \label{sec:proof strategy}
Before we start with the proof, we need some preparatory remarks. The length of the first exploration period is given by
$$\tau_1 = \inf\{t \geq \sigma_1: X_t = 0\} = \sigma_1 + T_0 \circ \theta_{\sigma_1},$$
with $\sigma_1 = \inf\{t \geq 0: X_t = -B\} \vee \inf\{t \geq 0: X_t = B\}$, $T_a = \inf\{t \geq 0: X_t = a\}$ for $a \in \R$ and $(\theta_t)_{t \geq 0}$ denoting the transition operators of the Markov process $\X$. 
We will need that $\E^0_b[\tau_1^3] < \infty$. 
By the strong  Markov property and the fact that $\PP^0_b(\sigma_1 < \infty)$ by point recurrence of the ergodic diffusion $\mathbf{X}$, we obtain 
$$\E^0_b[\tau_1^3] \leq 4\big(\E^0_b[T_B^3] + \E^0_b[T_{-B}^3]\big) + 4\big(\E^B_b[T_0^3] +\E^{-B}_b[T_0^3]\big),$$
and thus finiteness of the third moment of $\tau_1$ boils down to finiteness of the third moment of first hitting times of the diffusion. From \cite[Section 5]{loecherbach2011}, see also \cite{balaji2000}, it is known that if the diffusion coefficient $\sigma$ is bounded and there exist $r, M_0 > 0$ such that 
\begin{equation}\label{eq: bound mom hitting}
-\frac{xb(x)}{\sigma^2(x)} \geq r, \quad \forall \lvert x \rvert > M_0,\end{equation}
then $\E^x[T_a^n] < \infty$ for all $n < r + 1/2$. 
In our setting, boundedness of $\sigma$ is satisfied with $0<\underbars{\nu} \leq \sigma(x) \leq \overbar{\nu} < \infty$, and we have 
$$\mathrm{sgn}(x) \frac{b(x)}{\sigma^2(x)} \leq - \gamma, \quad \forall \lvert x \rvert > A,$$
for some constants $\gamma, A > 0$. Thus, for $\lvert x \rvert > A \vee r\slash \gamma = M_0$, \eqref{eq: bound mom hitting} is fulfilled, which implies that for any $n \in \N$ and $a,x \in \R$, $\E^x[T_a^n] < \infty$. It is worth noting that \cite{loecherbach2011} demonstrate how the existence of moments of hitting times is initimately connected to polynomial ergodicity of a diffusion and hence the existence of arbitrarily large hitting time moments can be regarded as a consequence of exponential ergodicity of $\X$ under the given assumptions. In particular $\E^0_b[\tau_1^3] < \infty$ as desired.

We will also make use of the following simple observation.
\begin{lemma}\label{lem: var}
Let $X$ be a random variable taking values in $[1,\infty)$ almost surely and suppose that
$$(0,2) \ni \beta \mapsto \E[X^\beta]$$
is differentiable with $\tfrac{\uppartial}{\uppartial \beta} \E[X^\beta] = \E[\tfrac{\uppartial}{\uppartial{\beta}} X^\beta].$ Then, the function 
$$[0,1] \ni \alpha \mapsto \mathrm{Var}(X^\alpha)$$
is increasing.
\end{lemma}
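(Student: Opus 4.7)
The plan is to show that the derivative of $\alpha \mapsto \mathrm{Var}(X^\alpha)$ is nonnegative on $[0,1]$ via an elementary correlation (Chebyshev sum/FKG) argument. Writing
\[
\varphi(\alpha) \coloneqq \mathrm{Var}(X^\alpha) = \E[X^{2\alpha}] - \E[X^\alpha]^2,
\]
the differentiability assumption (together with the dominated convergence argument behind it) lets us differentiate under the expectation to obtain
\[
\varphi'(\alpha) = 2\E[X^{2\alpha}\log X] - 2\E[X^\alpha]\,\E[X^\alpha\log X].
\]
Hence it suffices to establish
\[
\E[X^\alpha\cdot X^\alpha\log X]\;\geq\;\E[X^\alpha]\,\E[X^\alpha\log X].
\]

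The key observation is that, for $\alpha\in[0,1]$ and because $X\geq 1$ almost surely, both the maps $x\mapsto x^\alpha$ and $x\mapsto x^\alpha\log x$ are nondecreasing on $[1,\infty)$ (the second one since $\log x\geq 0$ there). I would then invoke the standard correlation inequality: if $f,g\colon [1,\infty)\to\R$ are both nondecreasing, then for any independent copy $X'$ of $X$ one has $(f(X)-f(X'))(g(X)-g(X'))\geq 0$ pointwise, and taking expectations gives $\E[f(X)g(X)]\geq \E[f(X)]\E[g(X)]$. Applying this with $f(x)=g_0(x)\coloneqq x^\alpha$ and $g(x)\coloneqq x^\alpha\log x$ yields exactly the inequality above, so $\varphi'(\alpha)\geq 0$ and the claim follows.

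No real obstacle is expected; the only delicate point is verifying that all the expectations appearing really are finite on the interval $\alpha\in[0,1]$, but this is guaranteed by the hypothesis that $\beta\mapsto \E[X^\beta]$ is differentiable on $(0,2)$, which entails $\E[X^{2\alpha}\log X]<\infty$ and $\E[X^\alpha\log X]<\infty$ for $\alpha\in[0,1]$; the interchange of derivative and expectation supplied in the statement then legitimises the computation of $\varphi'$.
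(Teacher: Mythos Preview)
Your proof is correct and follows essentially the same route as the paper: differentiate $\alpha\mapsto\mathrm{Var}(X^\alpha)$, rewrite the result as a covariance of two nondecreasing functions of $X$ (equivalently, in the paper, as $\mathrm{Cov}(X^\alpha,f(X^\alpha))$ with $f(y)=\alpha^{-1}y\log y$ increasing on $[1,\infty)$), and conclude nonnegativity via the standard correlation/FKG argument. Your derivative formula $\varphi'(\alpha)=2\E[X^{2\alpha}\log X]-2\E[X^\alpha]\E[X^\alpha\log X]$ is in fact the correct one; the extra factor $\alpha$ appearing in the paper's display is a harmless slip that does not affect the sign argument.
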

\begin{proof}
	By the smoothness assumptions, we have
	\begin{align*}
	\frac{\uppartial}{\uppartial \alpha} \mathrm{Var}(X^\alpha) = 2\alpha \E[X^{2\alpha} \log X]- 2\alpha\E[X^\alpha]\E[X^{\alpha} \log X] = 2\alpha \mathrm{Cov}(X^\alpha, f(X^\alpha)),
	\end{align*}
with $[1,\infty) \ni x \mapsto f(x) = \alpha^{-1}x\log x$. Since $f$ is increasing, it follows that $\mathrm{Cov}(X^\alpha, f(X^\alpha)) \geq 0$, proving the assertion.
\end{proof}

We are now ready to carry out the proof of Lemma \ref{lem:strategy}.
\begin{proof}[Proof of Lemma \ref{lem:strategy}]
We start with some necessary notation. Let $\eta^{j}_n$, $n \in \N$, be the length of the $n$-th exploration period for $j = 0$ and the length of the $n$-th exploitation period for $j = 1$. In particular, $\eta^0_1 = \tau_1$ and $(\eta^0_n)_{n=2,3,\ldots}$ is an i.i.d.\ family of random variables under $\PP^0_b$. Define also $\tau^j_n \coloneqq \sum_{i=1}^n \eta^j_i$ as the length of the first $n$ exploration/exploitation periods, thus 
$$\tau_n = \tau^0_{n-\sum_{i=1}^n c_i} + \tau^1_{\sum_{i=1}^n c_i},$$
where $\tau_0^0 = \tau^1_0 = 0$. Finally, denote $N_t \coloneqq \inf\{n \in \N : \tau_n > t\}$ and the  number of exploration/exploitation periods starting before time $t \geq 0$, $N_t^j = K^j_{N_t}$, where 
$$K^{j}_n \coloneqq \#\{i\leq n: c_i = j\}, \quad n \in \N, j \in \{0,1\}.$$
Clearly, $\tau^0_{N_t^0-1} \leq S_t \leq \tau^0_{N_t^0}$ for any $t \geq 0$, and thus 
$$\frac{\tau^0_{N_t^0-1}}{N^0_t} \leq \frac{S_t}{N^0_t} \leq \frac{\tau^0_{N_t^0}}{N^0_t}.$$
By construction and the strong law of large numbers, both the left-hand and the right-hand side $\PP^0_b$-a.s.\ converge to $\E^0_b[\eta^0_2]$ and hence 
\begin{equation}\label{eq:constr1}
	\frac{S_T}{N^0_T} \underset{T \to \infty}{\longrightarrow} \E^0_b[\eta^0_2], \quad \PP^0_b\text{-a.s.}.
\end{equation}
Let now $\underbars{\eta}^j,\overbar{\eta}^j$ be such that 
$$\underbars{\eta}^j \leq_{\text{st}} \eta^j_n \leq_{\text{st}} \overbar{\eta}^j, \quad n \in \N, j \in \{1,2\},$$
where $\leq_{\text{st}}$ denotes stochastic ordering. 
Existence of such random variables is clear in case $j=0$, and for $j=1$ we can take $\underbars{\eta}^1$ to be a cycle length when reflecting in $\pm 1/B$ and $\overbar{\eta}_1$ be a cycle length when reflecting in $\pm B$. Choosing $\underbars{\eta} = \underbars{\eta}^0 \wedge \underbars{\eta}^1$ and $\overbar{\eta} = \overbar{\eta}{}^0 \vee \overbar{\eta}{}^1$ we have 
$$\underbars{\eta} \leq_{\text{st}} \eta^j_n \leq_{\text{st}} \overbar{\eta}, \quad n \in \N, j \in \{1,2\},$$
Let now $(\underbars{\eta}{}_n)_{n \in \N}$ and $(\overbar{\eta}_n)_{n \in \N}$ be i.i.d.\ copies of $\underbars{\eta}$ and $\overbar{\eta}$, resp., where by resorting to a coupling argument if needed, we may assume wlog that $\underbars{\eta}{}_n \leq \eta^j_n \leq \overbar{\eta}_n$, $\PP^0_b$-a.s.\ for all $n \in \N$, $j \in \{0,1\}$ (see, e.g., \cite[Section 3]{thorisson1995coupling}). Defining 
$$\underbars{N}{}_t = \inf\Big\{n \in \N: \sum_{i=1}^n \underbars{\eta}{}_i > t \Big\}, \quad \overbar{N}_t = \inf\Big\{n\in \N: \sum_{i=1}^n \overbar{\eta}{}_i > t \Big\},$$
it follows that 
$$\overbar{N}_t \leq N_t \leq \underbars{N}{}_t, \quad \PP_b^0\text{-a.s.},$$
With the standard renewal theorem we have
$$\lim_{t \to \infty} \frac{\underbars{N}{}_t}{t} = \frac{1}{\E^0_b\big[\underbars{\eta}\big]}, \text{ and } \lim_{t \to \infty} \frac{\overbar{N}_t}{t} = \frac{1}{\E^0_b\big[\overbar{\eta}\big]}, \quad \PP^0_b\text{-a.s.\ and in } L^1(\PP^0_b).$$
By construction, we have 
$$\frac{N^0_t}{t^{2/3}} \leq \overbar{M}\Big(\frac{N_t}{t}\Big)^{2/3} \leq \overbar{M}  \Big(\frac{\underbars{N}_t}{t}\Big)^{2/3}.$$
Since, by Jensen's inequality and the above,
$$\E^0_b\Big[\Big(\frac{\underbars{N}{}_t}{t}\Big)^{2/3}\Big] \leq \E^0_b\Big[\frac{\underbars{N}_t}{t}\Big]^{2/3} \underset{t \to \infty}{\longrightarrow} \E^0_b\big[\underbars{\eta}\big]^{-2/3},$$
it follows that
$$\limsup_{T \to \infty} T^{-2/3}\E^0_b[N^0_T] \leq \overbar{M} \E^0_b\big[\underbars{\eta}\big]^{-2/3} \eqqcolon M,$$	
which establishes the second part of the assertion.
For the first part, consider the uncontrolled diffusion process $\mathbf{X}$ and let 
$$\breve{\tau}_n = \begin{cases} 0, &\text{if } n = 0\\ \inf\{t \geq \sigma_n : X_t = 0\}, &\text{if } n \in \N, \end{cases}$$
where 
$$\sigma_n = \inf\{t \geq \breve{\tau}_{n-1} : X_t = B\} \vee \inf\{t \geq \breve{\tau}_{n-1} : X_t = -B\}, \quad n \in \N.$$
By the strong Markov property, $(\breve{\tau}_n)_{n \in \N}$ are i.i.d.\ and if we denote $\breve{N}_t = \inf\{n \in \N: \breve{\tau}_n > t\}$ for $t \geq 0$, then $(\breve{N}_t)_{t \geq 0}$ is a  renewal process with increment distribution $\breve{\tau}_1$. Furthermore, we define in analogy to the controlled case above $\breve{\eta}^i_n \coloneqq \breve{\tau}_{C^i_n} - \breve{\tau}_{C^i_n - 1}$ for $i = 0,1$, where $C^0_n \coloneqq \min\{m \in \N: \sum_{i=1}^m (1-c_i) \geq n\}$ and $C^1_n \coloneqq \min\{m \in \N: \sum_{i=1}^m c_i \geq n\}$, and $\breve{\tau}{}^j_n \coloneqq \sum_{i=1}^n \breve{\eta}{}^j_i$ for $j = 0,1$. Finally, let $\breve{N}{}^i_t = K^i_{\breve{N}_t}$ for $t \geq 0$. Clearly,  
$$\breve{\eta}^0_n \overset{\text{d}}{=} \eta^0_n \quad \text{and} \quad \breve{\eta}^1_n \geq_{\text{st}} \eta^1_n$$
for any $n \in \N$, which yields 
\begin{equation}\label{eq: st dom time}
\breve{S}_t \leq_{\text{st}} S_t, \quad t \geq 0,
\end{equation}
where 
\begin{equation} \label{eq: time expl uncontrolled}
\breve{S}_t = \int_0^t \sum_{n = 1}^{\breve{N}{}^0_t} \one_{[\breve{\tau}_{C^0_n-1}, \breve{\tau}_{C^0_n}]}(s) \diff{s} \in \big[\breve{\tau}{}^0_{\breve{N}{}^0_t-1}, \breve{\tau}{}^0_{\breve{N}{}^0_t} \big], \quad t \geq 0.
\end{equation}
By the standard renewal theorem and the properties of $(c_n)_{n \in \N}$ we have
$$\frac{\breve{N}{}^0_t}{t^{2/3}} \geq \Big(\frac{\breve{N}_t}{t} \Big)^{2/3} \underset{t \to \infty}{\longrightarrow} \E^0_b[\breve{\tau}_1]^{-2/3}, \quad \PP^0_b\text{-a.s.}$$
which, on account of the fact that by combining the strong law of large numbers and \eqref{eq: time expl uncontrolled} we have
$$\frac{\breve{S}_T}{\breve{N}^0_T} \underset{T \to \infty}{\longrightarrow} \E^0_b[\breve{\tau}_1], \quad \PP^0_b\text{-a.s.},$$
shows that 
$$\liminf_{t \to \infty} \frac{\breve{S}_t}{t^{2/3}} \geq \E^0_b[\breve{\tau}_1]^{1/3} \eqqcolon \tilde{m}, \quad \PP^0_b\text{-a.s..}$$
Consequently, an application of Fatou's lemma provides 
$$\liminf_{t \to \infty} \E^0_b\Big[\frac{\breve{S}_t}{t^{2/3}} \Big] \geq  \tilde{m}.$$
Thus, choosing $m = \tilde{m}/2$, there exists $\varepsilon \in (0,\tilde{m}/2)$ such that for any $T>0$ large enough we have 
$$\E^0_b[\breve{S}_T] \geq T^{2/3}(m+\varepsilon),$$
which, together with Markov's inequality and \eqref{eq: st dom time}, yields
\begin{align*}
\PP^0_b(S_T \leq m T^{2/3}) \leq \PP^0_b(\breve{S}_T \leq m T^{2/3})&\leq \PP^0_b(\E^0_b[\breve{S}_T] - \breve{S}_T \geq \varepsilon T^{2/3})\\
&\leq \PP^0_b\big(\E^0_b\big[\breve{\tau}{}^0_{\breve{N}{}^0_T-1}\big] - \breve{\tau}^0_{\breve{N}{}^0_T} \geq \varepsilon T^{2/3}\big)\\
&\leq \varepsilon^{-2} T^{-4/3} \E^0_b\Big[\big(\breve{\tau}{}^0_{\breve{N}{}^0_T} -\E^0_b[\breve{\tau}{}^0_{\breve{N}{}^0_T-1}\big]\big)^2\Big] \\
&\leq 2\varepsilon^{-2} T^{-4/3}\big( \mathrm{Var}_{\PP^0_b}\big(\breve{\tau}{}^0_{\breve{N}{}^0_T}\big) + \E^0_b\big[\big(\breve{\tau}_1)^2\big]\big).
 \end{align*}
Thus, to show that $\PP^0_b(S_T \leq mT^{2/3}) \lesssim T^{-1/3}$ it is enough to establish that 
\begin{equation} \label{eq: conc expl}
\mathrm{Var}_{\PP^0_b}\big(\breve{\tau}^0_{\breve{N}^0_T}\big) \lesssim T.
\end{equation}
To this end, note first that for any $n \geq 2$ and $T \geq 0$ we have 
\begin{align*}
\{\breve{N}^0_T \leq n\} &= \bigcup_{k=1}^{n} \bigcup_{l\geq k} \big( \{\breve{N}^0_T = k\} \cap \{\breve{N}_T = l\}\big)\\
&= \bigcup_{k=1}^{n} \bigcup_{l \geq k} \big\{\breve{\tau}{}^0_k + \breve{\tau}{}^1_{l-k} > T, \breve{\tau}_{l-1} \leq T, \sum_{i=1}^{l-1} (1-c_i) \in \{k-1,k\}\big\} 	\\
&\eqqcolon \bigcup_{k=1}^{n} \bigcup_{l \geq k} A_{k,l}.
\end{align*}
By construction, $(\breve{\eta}{}^0_n)_{k \geq n+1} \indep \mathcal{G}_{n}$, where
$$\mathcal{G}_{n}\coloneqq \sigma\big(\{\breve{\eta}^0_1,\ldots,\breve{\eta}{}^0_{n}\} \cup \{\breve{\eta}^1_i : i < C^0_n\}\big)$$
is the $\sigma$-algebra generated by the lengths of the first ``exploration/exploitation'' periods up to the $n$-th ``exploration'' period of the uncontrolled process $\mathbf{X}$. Clearly, $A_{k,l} \in \mathcal{G}_{n}$ for any $k=1,\ldots,n$ and $l \geq k$, which shows that $\breve{N}^0_T$ is a $(\mathcal{G}_n)_{n \in \N}$ stopping  time.  Hence, we can use Wald's second moment identity, see \cite[Propostion A10.2]{MR1978607}, to obtain  
$$\mathrm{Var}_{\PP^0_b}\big(\breve{\tau}{}^0_{\breve{N}^0_T} \big) = \mathrm{Var}_{\PP^0_b}\Big(\sum_{i=1}^{\breve{N}{}^0_T} \breve{\eta}{}^0_i \Big) = \E^0_b[\breve{\tau}_1]^2 \cdot \mathrm{Var}_{\PP^0_b}(\breve{N}^0_T) + \mathrm{Var}_{\PP^0_b}(\breve{\tau}_1) \cdot \E^0_b[\breve{N}{}^0_T].$$
Since by Jensen's inequality and the renewal theorem 
$$\limsup_{T \to \infty } T^{-2/3} \E^0_b[\breve{N}{}^0_T] \leq \limsup_{T \to \infty}\overbar{M}T^{-2/3}\E^0\big[\breve{N}_T^{2/3}] \leq \overbar{M}\limsup_{T \to \infty} \big(\E^0[\breve{N}_T]/T\big)^{2/3} < \infty,$$
\eqref{eq: conc expl} will follow if we can prove that $\mathrm{Var}_{\PP^0_b}(\breve{N}^0_T) \lesssim T$. By classical renewal arguments, cf.\ \cite[Proposition 6.3]{MR1978607}, we have 
\begin{equation} \label{eq: conv var}
\lim_{T \to \infty} \frac{\mathrm{Var}_{\PP^0_b}(\breve{N}_T)}{T} = \frac{\mathrm{Var}_{\PP^0_b}(\breve{\tau}_1)}{\E^0_b[\breve{\tau}_1]^3},
\end{equation}
and by construction it follows that
\begin{align*}
\mathrm{Var}_{\PP^0_b}(\breve{N}{}^0_T) &= \E^0[(\breve{N}{}^0_T)^2] - \E^0[\breve{N}{}^0_T]^2\\
&\leq \E^0\big[\big((\breve{N}_T)^{2/3} + \mathfrak{d}\big)^2] - \E^0\big[(\breve{N}_T)^{2/3}]^2\\
&\lesssim \mathrm{Var}_{\PP^0_b}\big((\breve{N}_T)^{2/3}\big) + \E^0_b\big[(\breve{N}_T)^{2/3}\big].
\end{align*}
By Jensen's inequality, we have $\E^0_b\big[(\breve{N}_T)^{2/3}\big] \lesssim T$, and \eqref{eq: conv var} combined with Lemma \ref{lem: var} yields $\mathrm{Var}_{\PP^0_b}\big((\breve{N}_T)^{2/3}\big) \lesssim T$ (note here that $\E^0_b[\tfrac{\uppartial}{\uppartial \beta} (\breve{N}_T)^\beta] \leq 2\E^0_b[(\breve{N}_T)^3] < \infty$ for $\beta \in (0,2)$ and thus we may differentiate under the integral to obtain $\tfrac{\uppartial}{\uppartial \beta}\E^0_b[(\breve{N}_T)^\beta] = \E^0_b[\tfrac{\uppartial}{\uppartial \beta} (\breve{N}_T)^\beta]$ as needed). Thus, $\mathrm{Var}_{\PP^0_b}(\breve{N}{}^0_T) \lesssim T$, which shows that indeed $\PP^0_b(T^{-2/3}S_T \leq m)\lesssim T^{-1/3}$.
\end{proof}

\section{A brief summary of L\'evy processes and their overshoots} \label{sec: appendix levy}
This section is devoted to giving a (very) brief summary of L\'evy processes and their overshoots in order to keep the article reasonably self-contained. Talking about overshoots quite naturally guides us into fluctuation theory of L\'evy processes, which is based on a rigorous treatment of excursions of L\'evy processes from its maximum and minimum. For an extensive textbook treatment of fluctuation theory, we refer to \cite{kyprianou2014} with basic properties of overshoots being discussed in Chapter 5. 
A general account on L\'evy processes is given in the monographs \cite{bertoin1996} and \cite{sato2013}.

We consider a L\'evy process $\X$ with underlying natural filtration $\F = (\cF_t)_{t \geq 0}$ augmented in the usual way, which is equipped with a family of probability measures $(\PP^x)_{x \in \R}$ such that $(\X,\F,(\PP^x)_{x \in \R})$ is a Markov process. Thus, $\X$ has c\`adl\`ag paths, almost surely starts in $x$ under $\PP^x$, has stationary and independent increments under $\PP^0$ and its semigroup $(P_t)_{t \geq 0}$ is given by 
$$P_t(x,B) = \PP^x(X_t \in B) = \PP^0(X_t + x \in B), \quad x \in \R, B \in \mathcal{B}(\R).$$
From the last equality, it follows that $\X$ is \textit{spatially homogeneous}, i.e. $\{\X+x,\PP^0\} \overset{\mathrm{d}}{=} \{\X,\PP^x\},$ and one easily derives that $\X$ is a Feller process, that is, if we denote by $\mathcal{C}_0(\R)$ the space of continuous functions $f \colon \R \to \R$ vanishing at infinity, it holds that $P_t \mathcal{C}_0(\R)\subset \mathcal{C}_0(\R)$ for any $t \geq 0$ and moreover $P_t f \to f$ strongly as $t \to 0$.

While the Fellerian nature of L\'evy processes is still fairly general from a Markovian perspective, it is the spatial homogeneity which gives rise to a quite unique and powerful theory. 
The fundamental starting point to the analysis of L\'evy processes is the L\'evy--Khintchine formula, which identifies the characteristic function of the marginals of the process and hence uniquely describes the complete process in terms of a \textit{characteristic triplet} $(a,\sigma,\Pi)$, where $a \in \R$, $\sigma \geq 0$ and $\Pi$ is a measure on $\R$ (called L\'evy measure) having no atom at $0$ and being such that $\int_{\R} 1 \wedge x^2 \, \Pi(\diff{x}) < \infty$. 
The L\'evy--Khintchine formula then states that $\E^0[\exp(\mathrm{i}\theta X_t)] = \exp(t\Psi(\theta))$, $\theta \in \R$, $t \geq 0$, with the \textit{characteristic exponent} $\Psi$ satisfying 
$$\Psi(\theta) = \mathrm{i}a \theta - \frac{\sigma^2\theta^2}{2} + \int_{\R} \big(\mathrm{e}^{\mathrm{i}\theta x} - 1 - \mathrm{i}\theta x \one_{[-1,1]}(x)\big)\, \Pi(\diff{x}).$$
On the level of processes, the L\'evy--Khintchine representation can be translated into a partition of the process into a linear Brownian motion $(at + \sigma B_t)_{t \geq 0}$ and an independent pure jump process characterized by $\Pi$, which itself decomposes into an independent compound Poisson process and a zero mean $L^2$-martingale with infinitely many jumps bounded by $1$ on any finite time interval. If $\int_1^\infty x \, \Pi(\diff{x}) < \infty$, it follows from the L\'evy--Khintchine representation that the first moment of $X_t$ is finite for any $t \geq 0$ and $\E^0[X_t] = t\eta$ with 
$$\eta = \E^0[X_1] = a + \int_{\R\setminus[-1,1]} x \, \Pi(\diff{x})$$
determining the long-time behaviour of $\X$ in the sense that 
\begin{enumerate}[leftmargin=*,label=(\roman*),ref=(\roman*)]
\item $\eta > 0 \implies \lim_{t \to \infty} X_t = \infty$, $\PP^0$-a.s.;
\item $\eta < 0 \implies \lim_{t \to \infty} X_t = -\infty$, $\PP^0$-a.s.;
\item$\eta = 0 \implies \limsup_{t \to \infty} X_t = -\liminf_{t \to \infty} X_t= \infty$, $\PP^0$-a.s..
\end{enumerate}
and 
$$\lim_{t \to \infty} \frac{X_t}{t} = \eta, \quad \PP^0\text{-a.s.}.$$
With this basic preparation on the characteristics of L\'evy processes, let us now come to their fluctuation theory, with a certain emphasis  on the so called Wiener--Hopf factorization. This commands a discussion of the ascending ladder height process, which dominates our analysis of data-driven solutions to impulse control problems associated to L\'evy processes. This process is derived from the local time at the supremum $\mathsf{L} = (\mathsf{L}_t)_{t \geq 0}$, which is a stochastic process measuring the time that $\X$ spends at its running supremum $\overbar{X}_t = \sup_{0 \leq s \leq t} X_s$, $t \geq 0$. Its construction is based on the observation that $\mathbf Y= (\overbar{X}_t- X_t)_{t \geq 0}$, which can be interpreted as the process obtained from reflecting $\X$ at its supremum, is a strong Markov process and hence one can define $\mathsf{L}$ as the local time at $0$ for $\mathbf{Y}$, which means that $\mathsf L$ is an additive functional of $\mathbf{Y}$ which almost surely increases on the closure of $\{t \geq 0 : Y_t = 0 \} = \{t \geq 0 : X_t = \overbar{X}_t \}$. In case that $\X$ is \textit{upward regular}, i.e., for $T_0 \coloneqq \inf\{t\geq 0: X_t > 0\}$ we have $\PP^0(T_0 = 0) = 1$, $\mathsf{L}$ can be constructed as a process with almost surely continuous paths, which entails that its right-continuous inverse $\mathsf{L}^{-1}_t = \inf\{s \geq 0: L_s > t\}$, $t \geq 0$, is almost surely \textit{strictly} increasing. In this case, the ascending ladder height process $\mathbf H = (H_t)_{t \geq 0}$, defined by 
$$H_t = \begin{cases} X_{\mathsf{L}^{-1}_t}, & \text{ if } 0\leq t < \mathsf{L}_\infty\\ \infty, & \text{ if } t \geq \mathsf{L}_\infty,\end{cases}$$
is a \textit{killed} subordinator, strictly increasing up to its lifetime $\mathsf{L}_\infty$, i.e., $\mathbf H$ is equal in law to a strictly increasing L\'evy process, which is sent to the cemetery state $\infty$ after an independent exponentially distributed random time with expectation $\E^0[\mathsf{L}_\infty]$.  If $\limsup_{t \to \infty} X_t = \infty$, which as seen before is guaranteed if $\E^0[\lvert X_1 \rvert] < \infty$ and $\eta = \E^0[X_1] \geq 0$, it follows that $\mathsf{L}_\infty = \infty$ almost surely and hence $\mathbf H$ is unkilled. Moreover, when $\eta > 0$, which is the setting that we will be working with in this paper, it holds that $0 < \E^0[H_1] < \infty$ as well, which can be deduced from \eqref{eq:vigon} below. It is important to note that $\mathsf{L}$ is only characterized uniquely up to a multiplicative constant and hence the definition of $\mathbf{H}$ depends on the chosen scaling of local time. 
For our purposes, it will be convenient to choose a scaling of local time such that $\E^0[\mathsf{L}^{-1}_1] = 1$ and hence, by Wald's equality, $\E^0[H_1] = \E^0[X_1]\E^0[\mathsf L^{-1}_1] = \E^0[X_1]$.

When upward regularity does not hold, $(\mathsf{L}^{-1}_t)_{t \geq 0}$ will not be strictly increasing and consequently, the ascending ladder height process $\mathbf H$ is a (possibly killed) compound  Poisson subordinator. In any scenario, the Laplace exponent $\Phi_H(\lambda)$, given by 
$$\Phi_H(\lambda)  = q + d_H \lambda + \int_0^\infty (1-\mathrm{e}^{-\lambda x})\, \Pi_H(\diff{x}), \quad \lambda \geq 0,$$
satisfies $\E^0[\exp(-\lambda H_t)] = \exp(-t\Phi_H(\lambda))$ and we refer to $d_H$ as the drift and $\Pi_H$ as the L\'evy measure of $\mathbf H$.

In the same vein, we can construct the ascending ladder height process $\hat{\mathbf H} = (\hat{H}_t)_{t \geq 0}$ for the dual L\'evy process $\hat \X = - \X$, which corresponds to time changing $\X$ by the right continuous inverse of local time $\hat{\mathsf{L}}$ at the \textit{infimum} of $\X$. Therefore, $\hat{\mathbf H}$ is referred to as the \textit{descending} ladder height process. If we denote by $\hat{\Phi}_H$ the Laplace exponent of $\hat{\mathbf H}$, then the Wiener--Hopf factorization tells us that $\X$ is fully characterized by means of the ascending and descending ladder height processes since the L\'evy--Khintchine exponent of $\X$ can be expressed as a factorization of the Laplace exponents of $\mathbf H$ and $\hat{\mathbf H}$, 
\begin{equation}\label{eq:wh}
\Psi(\theta) = -\mathsf{c} \Phi_H(-\mathrm{i}\theta)\hat{\Phi}_H(\mathrm{i}\theta), \quad \theta \in \R,
\end{equation}
where the constant $\mathsf{c}$ depends on the scaling of local time at the supremum and infimum. Among others, this factorization allows to express the characteristics of $\mathbf H$ in terms of the characteristics of $\X$ and $\hat{\mathbf H}$. A particularly useful identity for understanding the ascending ladder height L\'evy measure, usually referred to as Vigon's \'equation amicale invers\'e, was demonstrated in \cite{vigon2002}:
\begin{equation}\label{eq:vigon}
\Pi_H(\diff{x}) = \int_0^\infty \Pi(y+ \diff{x})\, \hat{U}_H(\diff{y}), \quad x >  0.
\end{equation}
Here, $\hat{U}_H(\diff{x}) = \E^0[\int_0^\infty \one_{\{\hat{H}_t \in \diff{x}\}} \diff{t}]$, $x \geq 0$, denotes the potential measure of $\hat{\mathbf H}$ and without loss of generality, the constant $\mathsf{c}$ in \eqref{eq:wh} is set to unity. 

While the theoretical solution strategy of the L\'evy driven impulse control problem is driven by the generator functional $f = \mathcal{A}_H \gamma$ with $\mathcal{A}_H$ denoting the generator of the ascending ladder height process $\mathbf H$, our data-driven strategy makes use of the link between $\mathbf H$ and \textit{overshoots} $\bm \cO = (\cO_t)_{t \geq 0}$ of $\X$ to find an estimator $\hat{f}_T$, from which the optimal reflection boundary $\coup^\ast$ is approximated by the greedy strategy \eqref{eq:strategy levy}. Let us first discuss classical properties of overshoots and then make the transition to the approach undertaken in \cite{doering21}, whose results are fundamental for the approximation properties of our statistical procedure. 

Let $t \geq 0$ be a given \textit{level} and  consider the overshoot $\cO_t$ over $t$, given by 
$$\cO_t = X_{T_t} - t$$
on $\{T_t < \infty\}$, where $T_t \coloneqq \inf\{s \geq 0: X_s > t\}$. Let us assume from here on that \ref{ass: upward} is  in place, which in particular implies $T_t < \infty$ almost surely, $0 < \E^0[H_1] < \infty$ and that $\mathbf H$ is an unkilled, strictly increasing subordinator. The fundamental link between $\bm\cO$ and $\mathbf H$ now stems from the observation that, due to its construction, the range of $\mathbf{H}$ almost surely coincides with the range of the running supremum process $(\overbar{X}_t)_{t \geq 0}$. As a consequence, the overshoot process $\bm\cO^H$ associated to $\mathbf H$ is indistinguishable from the overshoot process $\bm\cO$ associated to $\X$. Hence, if we want to estimate the characteristics of $\mathbf{H}$ (which cannot be observed based on a sample of $\X$ due to a lack of explicitness of the local time $\mathsf{L}$), one could hope for utilizing the overshoot link to get hold of $\mathbf H$ based on observations of $\X$, provided that $\bm \cO$ has some kind of regularity structures. 
Indeed, making use of the compensation formula (cf.\ \cite[Theorem 4.4]{kyprianou2014}), it can be shown that the law of $\cO_t = \cO^H_t$ is given by
$$\PP^x(\cO_t \in \diff{y}) = \delta_{x-t}(\diff{y})\one_{[0,x]}(t) + \int_{[0,t-x]} \Pi_H(u + \diff{y}) \, U_H(t-x -\diff{u}) \one_{(x,\infty)}(t), \quad y > 0,$$
(see \cite[Theorem 5.6]{kyprianou2014}) and resorting to classical renewal arguments it can be deduced from this formula that (cf.\ \cite[Theorem 5.7]{kyprianou2014})
\begin{equation} \label{eq: weak overshoot}
\PP^x(\cO_t \in \diff{y}) \overset{\mathrm{w}}{\underset{t \to \infty}{\longrightarrow}} \frac{1}{\E^0[H_1]} \big( d_H \delta_0(\diff{y}) + \one_{(0,\infty)}(y) \Pi_H((y,\infty))\big) \eqqcolon \mu(\diff{y}), \quad y \geq 0.
\end{equation}
Similar results can be obtained for overshoots associated to \textit{Markov additive processes} (MAPs), which are a natural generalization of L\'evy processes in the sense that these regime switching processes are constructed from a finite family of L\'evy processes and a continuous-time Markov chain, which governs the behaviour of the MAP according to the L\'evy process associated to the current state of the chain. By considering a MAP with a trivial underlying Markov chain, we obtain a L\'evy process by projection and hence results on overshoots of MAPs have direct analogues for overshoots of L\'evy processes. Let us therefore reformulate the findings from \cite{doering21}, where the authors study the convergence rate for stronger modes of convergence of MAP overshoots and consequently deduce the mixing behaviour of these objects, to the current setting.

The starting point for improving \eqref{eq: weak overshoot} in \cite{doering21} is the fact that, under the given assumptions, $\bm \cO$ can be shown to be a Feller process, whose unique invariant distribution is given by $\mu$ \cite[Proposition 3.3, Theorem 3.8]{doering21}.
This opens the door to tackling stability of $\bm\cO$ with the Meyn and Tweedie approach driving the complete statistical setting in this paper, as described in Section \ref{sec:2}. As a first crucial step, it is shown in \cite{doering21} that under some additional mild constraints on the characteristics of $\mathbf{H}$, which can be expressed in terms of constraints on the characteristics of $\X$, we can improve \eqref{eq: weak overshoot} to total variation convergence---or said differently to ergodicity of $\bm \cO$.

\begin{proposition}[Theorem 3.18 and Lemma 4.7 in \cite{doering21} and Theorem 7.11 in \cite{kyprianou2014}] \label{prop: ov tv}
Given \ref{ass: ergodic}, it holds that, for any $x \geq 0$,
\begin{equation} \label{eq:tv overshoot}
\PP^x(\cO_t \in \cdot) \overset{\mathrm{TV}}{\underset{t \to \infty}{\longrightarrow}} \mu.
\end{equation}
Moreover, \ref{ass: ergodic} is fulfilled if one of the following conditions hold:
\begin{enumerate}[leftmargin=*,label=(\roman*),ref=(\roman*)]
\item
 $\exists (a,b) \subset \R_+ \text{ s.t.\ } \bm{\lambda}\vert_{(a,b)} \ll \Pi\vert_{(a,b)}$;
\item
 $\X$ has bounded variation with L\'evy--Khinthchine exponent 
$$\Psi(\theta) = \mathrm{i}\delta \theta + \int_{\R} \big(\mathrm{e}^{\mathrm{i}\theta x} -1 \big)\, \Pi(\diff{x}),$$ 
and $\delta > 0$; \label{cond:bounded  var}
\item
 $\X$ has a Gaussian component;\label{cond:gauss}
\item
 $\X$ has positive jumps, unbounded variation, no Gaussian component and its L\'evy measure $\Pi$ satisfies 
$$\int_0^1 \frac{x \Pi((x,\infty))}{\int_0^x \int_y^1 \Pi((-1,-u)) \diff{u}\diff{y}} \diff{x} < \infty.$$ \label{cond:unbounded var}
\end{enumerate}
\end{proposition}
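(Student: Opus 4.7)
The first assertion, total variation convergence under \ref{ass: ergodic}, fits squarely into the Meyn--Tweedie stability framework already exploited for the diffusion case in Proposition \ref{lem:rfunction}. Since the Feller property of $\bm\cO$ and uniqueness of its invariant law $\mu$ are in place, the plan is to verify three structural properties: (a) $\mu$-irreducibility of $\bm\cO$, (b) that $\bm\cO$ is a T-process, and (c) aperiodicity of some skeleton chain. Together these upgrade the weak convergence of \eqref{eq: weak overshoot} to convergence in total variation via a standard application of Theorem 7.1 of \cite{Tweedie1994} combined with Theorem 13.0.1 of Meyn--Tweedie type results (the same chain of arguments used to derive \eqref{ann: exp} in Proposition \ref{lem:rfunction}). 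The two branches of \ref{ass: ergodic} supply the required smoothing by different mechanisms: when $d_H > 0$, the ladder height subordinator $\mathbf H$ creeps continuously, so the explicit kernel formula $\PP^x(\cO_t \in \diff{y}) = \delta_{x-t}(\diff{y})\one_{[0,x]}(t) + \int \Pi_H(u+\diff{y}) U_H(t-x-\diff{u})\one_{(x,\infty)}(t)$ inherits an atom at zero plus a non-trivial absolutely continuous part from the drift contribution to $U_H$; when instead $\bm\lambda\vert_{(a,b)} \ll \Pi_H\vert_{(a,b)}$, the jump integral itself contributes an absolutely continuous part on a suitable level interval. Either way, after a sampling over $t$ against an appropriate distribution, the resulting kernel dominates a non-trivial lower semicontinuous kernel on a reachable set in $\mathrm{supp}(\mu)$, giving the T-process property and $\mu$-irreducibility simultaneously.

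For the sufficient conditions, the pivot is Vigon's amicale invers\'ee \eqref{eq:vigon}, $\Pi_H(\diff{x}) = \int_0^\infty \Pi(y+\diff{x})\, \hat U_H(\diff{y})$, which lets us transfer regularity of the characteristics of $\X$ and of the dual ladder structure to those of $\mathbf H$. Condition (i) is the most transparent: if $\Pi\vert_{(a,b)}$ has a density, then for any test set $B \subset (a,b)$ the integrand $y \mapsto \Pi(y+B)$ is bounded above by the integral of this density over a shifted set, and Tonelli's theorem yields that $\Pi_H\vert_{(a,b)}$ is dominated by Lebesgue measure. Condition (ii) is tackled by reading off $d_H$ directly from the Wiener--Hopf factorization \eqref{eq:wh}: for a bounded variation process with drift $\delta$, one has $d_H \hat{\Phi}_H(0+) = \delta$ up to the local-time normalization, so $\delta > 0$ forces $d_H > 0$, i.e.\ the first alternative of \ref{ass: ergodic}.

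Conditions (iii) and (iv) are the deeper cases because the path character of $\X$ enters through the descending ladder height potential $\hat U_H$. For (iii), I would invoke the classical creeping characterisation of \cite{kyprianou2014}: a non-trivial Gaussian component causes $\X$ to creep both upwards and downwards, which is equivalent to $d_H > 0$ and $d_{\hat H} > 0$, so the first alternative of \ref{ass: ergodic} is in force. For (iv), the integral condition on $\Pi$ is precisely the criterion ensuring that the descending ladder height subordinator $\hat{\mathbf H}$ has an absolutely continuous potential measure $\hat U_H$ (this is the classical regularity-of-potentials result cited in \cite{doering21}); plugging $\hat U_H(\diff y) = u_H(y)\diff y$ into Vigon's formula and applying Fubini immediately produces a Lebesgue density for $\Pi_H$ on $(0,\infty)$, delivering the second alternative of \ref{ass: ergodic}.

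The main obstacle I anticipate is not in any one of the four sufficient conditions in isolation, but rather in the first part: one must carefully verify T-process-ness, irreducibility and aperiodicity of the overshoot kernel starting from the two alternatives in \ref{ass: ergodic}, exploiting the explicit overshoot law via the renewal representation together with the spatial homogeneity of $\X$. Each alternative needs its own argument to exhibit a reachable absolutely continuous component of the kernel, and some care is needed when the starting point $x$ is zero (so that the initial $\delta_{x-t}$ piece is absent) versus strictly positive. Once this regularity is in hand, the stability machinery applies uniformly.
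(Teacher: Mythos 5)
Note first that the paper itself gives no proof of this proposition: it is imported verbatim from Theorem 3.18 and Lemma 4.7 of \cite{doering21} (together with Theorem 7.11 of \cite{kyprianou2014}), so there is no internal argument to compare against. Judged on its own terms, your sketch of the first part is the right machinery and matches the strategy of \cite{doering21}: Feller property of $\bm\cO$, a $T$-process/irreducibility/aperiodicity verification via the explicit overshoot kernel, then Meyn--Tweedie stability (e.g.\ \cite{Tweedie1994}) to upgrade \eqref{eq: weak overshoot} to total variation convergence. The genuine gap is in your treatment of the sufficient conditions, where you have reversed the direction of the absolute continuity throughout. Condition (i) and the second alternative of \ref{ass: ergodic} read $\bm{\lambda}\vert_{(a,b)} \ll \Pi\vert_{(a,b)}$ and $\bm{\lambda}\vert_{(a,b)} \ll \Pi_H\vert_{(a,b)}$: the L\'evy measure must charge every subset of $(a,b)$ of positive Lebesgue measure. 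It is not assumed that $\Pi$ has a density, and what is needed is not that $\Pi_H$ has one. Your Tonelli argument derives ``$\Pi_H\vert_{(a,b)}$ is dominated by Lebesgue'' from ``$\Pi\vert_{(a,b)}$ has a density'', which is neither the stated hypothesis nor the required conclusion, and in any case would not feed the irreducibility argument of the first part (a measure with a density may still vanish on large sets). The correct implication (i) $\Rightarrow$ \ref{ass: ergodic} runs the other way through Vigon's formula \eqref{eq:vigon}: for $B \subset (a,b-\varepsilon)$ with $\bm{\lambda}(B) > 0$ one has $\Pi(y+B) > 0$ for every $y \in [0,\varepsilon)$, and since the potential measure $\hat{U}_H$ charges every neighbourhood of $0$, it follows that $\Pi_H(B) = \int_0^\infty \Pi(y+B)\, \hat{U}_H(\diff{y}) > 0$, i.e.\ $\bm{\lambda} \ll \Pi_H$ on a slightly smaller interval.

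The same reversal derails your argument for (iv). Even if the displayed integral test did imply absolute continuity of $\hat{U}_H$, a Lebesgue density for $\Pi_H$ would not yield $\bm{\lambda} \ll \Pi_H$, so the ``second alternative'' would not follow. More importantly, this is not the mechanism: as the remark immediately following the proposition records, conditions (ii)--(iv) are precisely the necessary and sufficient criteria for $d_H > 0$, i.e.\ they all deliver the \emph{first} alternative of \ref{ass: ergodic}. The integral condition in (iv) is the classical (Vigon-type) criterion for upward creeping, equivalently positivity of the ascending ladder drift, for processes of unbounded variation with no Gaussian part; it must be argued as a creeping criterion, not via smoothing of the descending potential. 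Your items (ii) and (iii) are essentially fine (bounded variation with drift $\delta > 0$, and a Gaussian component, each force $d_H > 0$), but as written the proofs of (i) and (iv) establish statements different from the ones needed.
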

\begin{remark}
The mutually exclusive conditions \ref{cond:bounded var}-\ref{cond:unbounded var} are necessary and sufficient criteria for $d_H > 0$.
\end{remark}
To establish exponential rates of convergence in \eqref{eq:tv overshoot} under the natural assumption that $H_1$ possesses an exponential moment, \cite{doering21} combine a Lyapunov-type drift criterion given in \cite[Theorem 5.2]{DownMeynTweedie1995} with an explicit calculation of the resolvent kernel $$\mathcal{R_\lambda}(x,\cdot) \coloneqq \int_0^\infty \lambda \mathrm{e}^{-\lambda t} \PP^x(\cO_t \in \cdot) \diff{t},$$ which has the interpretation as the transition kernel of the Markov chain obtained from sampling $\X$ at subsequent independent exponential times with rate $\lambda$. Building on exponential ergodicity it is then shown in \cite{doering21} that $\bm \cO$ is exponentially $\beta$-mixing for any initial distribution possessing an exponential moment, which in particular includes the stationary distribution $\mu$. 

\begin{proposition}[Theorem 3.21, Theorem 3.24 and Lemma 4.7 in \cite{doering21}] \label{prop: ov exp}
Grant \ref{ass: ergodic} and \ref{ass: exp}. Then, convergence in \eqref{eq:tv overshoot} takes place at exponential rate. 
More precisely, for any $\delta \in (0,1)$ there exists a constant $c(\delta) > 0$ such that 
$$\big\lVert \PP^x(\cO_t \in \cdot) - \mu \big\rVert_{\mathrm{TV}} \leq c(\delta) \mathcal{R}_\lambda \exp(\lambda \cdot)(x) \mathrm{e}^{-t/(2+\delta)},\quad t \geq 0.$$
Moreover, if $\eta$ is some distribution on $(\R_+,\mathcal{B}(\R_+))$ such that $\eta(\exp(\lambda \cdot)) < \infty$, then, if $\X$ is started in $\eta$, $\bm \cO$ is exponentially $\beta$-mixing with rate 
$$\beta_{\PP^\eta}(t) \leq 2\varrho(\eta,\lambda,\delta) \mathrm{e}^{-t/(2+\delta)},$$
where 
$$\varrho(\eta,\lambda,\delta) = c(\delta)\sup_{t \geq 0} \E^\eta\big[\mathcal{R}_\lambda \exp(\lambda \cdot)(\cO_t) \big] < \infty.$$
Finally, \ref{ass: exp} is satisfied if and only if 
$$\int_1^\infty \mathrm{e}^{\lambda x} \, \Pi(\diff{x}) < \infty.$$
\end{proposition}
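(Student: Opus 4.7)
The plan is to run the Meyn--Tweedie Lyapunov programme for $V$-geometric ergodicity on the overshoot Feller process $\bm{\cO}$, with Lyapunov function $V(x)=\mathrm{e}^{\lambda x}$. First I would verify $\mu(V)<\infty$: by the explicit form of $\mu$ recalled before Proposition~\ref{prop: ov tv}, this reduces to $\int_0^\infty\mathrm{e}^{\lambda y}\Pi_H((y,\infty))\,\diff{y}<\infty$, which is equivalent to $\int_1^\infty\mathrm{e}^{\lambda x}\Pi_H(\diff{x})<\infty$ and hence, via the Laplace exponent representation of the subordinator $\mathbf{H}$, to \ref{ass: exp}. Next I would derive a geometric drift inequality $P_sV(x)\leq\kappa V(x)+L\mathbf{1}_C(x)$ for some $s>0$, $\kappa\in(0,1)$, $L>0$ and compact $C\subset\R_+$. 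The input is the semi-explicit description of the overshoot transition kernel in terms of $(\Pi_H,U_H)$ given before \eqref{eq: weak overshoot}; plugging $V$ into that formula and using \ref{ass: exp} makes all integrals converge and, for $x$ large, yields domination by $\kappa V(x)$.

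To lift this drift inequality to continuous-time exponential ergodicity, I would invoke Theorem~5.2 of \cite{DownMeynTweedie1995}. Its remaining hypotheses, $\psi$-irreducibility of $\bm{\cO}$ and petiteness of all compact sets, are already guaranteed by \ref{ass: ergodic} and Proposition~\ref{prop: ov tv}, combined with the Feller property of $\bm{\cO}$ and the absolutely continuous piece that the sampled kernel $\mathcal{R}_\lambda$ inherits from $\mu$. The theorem then yields $V$-geometric ergodicity of the $s$-skeleton chain, $\lVert P_{ns}(x,\cdot)-\mu\rVert_{\mathrm{TV}}\leq DV(x)r^n$ for some $r\in(0,1)$. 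The continuous-time bound with $\mathcal{R}_\lambda V$ on the right is obtained by a smoothing/interpolation step: replacing $V(x)$ by $\mathcal{R}_\lambda V(x)=\int_0^\infty\lambda\mathrm{e}^{-\lambda u}P_uV(x)\,\diff{u}$ absorbs the skeleton bound against the exponential clock, while the rate $1/(2+\delta)$ emerges from optimising the skeleton step $s$ against the geometric rate $r$, with the slack $\delta>0$ accommodating the Jensen-type averaging of $V(\cO_u)$ across the skeleton interval.

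For the $\beta$-mixing statement I would use the standard Markov inequality
\[
\beta_{\PP^\eta}(t)\leq 2\sup_{s\geq 0}\E^\eta\bigl[\lVert P_t(\cO_s,\cdot)-\mu\rVert_{\mathrm{TV}}\bigr],
\]
which follows by inserting $\mu$ into the second marginal via the triangle inequality and then exploiting the Markov property. Plugging in the first part of the proposition pointwise yields $\beta_{\PP^\eta}(t)\leq 2c(\delta)\sup_{s\geq 0}\E^\eta[\mathcal{R}_\lambda V(\cO_s)]\,\mathrm{e}^{-t/(2+\delta)}$, and the supremum is finite whenever $\eta(V)<\infty$ because iterating the drift inequality gives $\sup_{s\geq 0}P_sV(x)\leq V(x)+L/(1-\kappa)$, which is integrable against $\eta$. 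Finally, the equivalence \ref{ass: exp}$\Leftrightarrow\int_1^\infty\mathrm{e}^{\lambda x}\Pi(\diff{x})<\infty$ is a classical fluctuation-theoretic calculation that combines $\E^0[\mathrm{e}^{\lambda H_1}]<\infty\Leftrightarrow\int_1^\infty\mathrm{e}^{\lambda x}\Pi_H(\diff{x})<\infty$ with Vigon's \emph{équation amicale inversée}~\eqref{eq:vigon} and the sub-linear growth of the descending potential measure $\hat{U}_H$.

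The principal obstacle I anticipate is the interpolation step that produces the precise rate $1/(2+\delta)$ together with the smoothed weight $\mathcal{R}_\lambda V$: one has to track integrability carefully and balance the skeleton step length against the geometric rate without blowing up the multiplicative constant; verifying the petite-set hypothesis for compact subsets of $\R_+$ is secondary but also nontrivial as it relies on explicit resolvent information extracted from the Wiener--Hopf decomposition of $\X$.
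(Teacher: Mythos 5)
This proposition is not proved in the paper at all: it is imported verbatim from \cite{doering21} (Theorems 3.21, 3.24 and Lemma 4.7 there), and the surrounding text only sketches that reference's method, namely a Lyapunov-type drift criterion in the sense of \cite[Theorem 5.2]{DownMeynTweedie1995} combined with an explicit computation of the resolvent kernel $\mathcal{R}_\lambda$, followed by the standard bound of the $\beta$-mixing coefficient by the expected total variation distance. Your outline follows exactly this strategy (drift with $V(x)=\mathrm{e}^{\lambda x}$, ergodicity via Down--Meyn--Tweedie, the factor-$2$ mixing inequality, and Vigon's \'equation amicale invers\'ee for the moment equivalence), so it is consistent with the approach the paper attributes to the cited work; the only point where your sketch likely deviates from \cite{doering21} is the skeleton-chain interpolation step, since the form of the bound --- weight $\mathcal{R}_\lambda\exp(\lambda\cdot)(x)$ and rate $\mathrm{e}^{-t/(2+\delta)}$ --- indicates that there the continuous-time drift condition is verified with the resolvent-smoothed function itself as Lyapunov function (via the resolvent identity), rather than by optimising a skeleton step against a geometric rate, but this concerns details of the external proof that the present paper does not reproduce.
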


\end{appendix}

\printbibliography
\end{document}